
\documentclass[10pt,reqno]{amsart}
\usepackage{longtable}
\usepackage{hyperref}
\usepackage[T1]{fontenc}
\usepackage[utf8]{inputenc}
\usepackage[english]{babel}
\usepackage{textcomp}
\usepackage{dsfont}
\usepackage{latexsym}
\usepackage{amssymb}
\usepackage{amsthm}
\usepackage{amsmath}
\DeclareMathAlphabet{\mathpzc}{OT1}{pzc}{m}{en}
\usepackage{yfonts}
\usepackage{xfrac}
\usepackage{newlfont}
\usepackage{graphicx}
\usepackage{mathtools}
\usepackage{comment}
\usepackage{indentfirst}
\usepackage{braket}
\usepackage{mathrsfs}
\usepackage{xcolor}
\usepackage{fixmath}

\numberwithin{equation}{section}

\textwidth16.5cm
\textheight21cm
\evensidemargin.2cm
\oddsidemargin.2cm

\addtolength{\headheight}{3.2pt}    

\usepackage{scalerel}[2014/03/10]
\usepackage[usestackEOL]{stackengine}
\newcommand{\dashint}{\,\ThisStyle{\ensurestackMath{%
\stackinset{c}{.2\LMpt}{c}{.5\LMpt}{\SavedStyle-}{\SavedStyle\phantom{\int}}}%
\setbox0=\hbox{$\SavedStyle\int\,$}\kern-\wd0}\int}

\DeclareMathOperator{\pr}{pr}

\DeclareMathOperator{\supp}{Supp}

\DeclareMathOperator{\tr}{Tr}

\DeclareMathOperator{\Hol}{Hol}

\DeclareMathOperator{\Sp}{sp}

\renewcommand{\Re}{\mathrm{Re}\,}
\renewcommand{\Im}{\mathrm{Im}\,}
\newcommand{\Supp}[1]{\supp\left( #1\right) }
\newcommand{\Pfaff}{\mathrm{Pf}}
\newcommand{\ee}{\mathrm{e}}

\newcommand{\vect}[1]{\mathbf{{#1}}}
\newcommand{\dd}{\mathrm{d}}

\DeclarePairedDelimiter{\abs}{\lvert}{\rvert}

\DeclarePairedDelimiter{\norm}{\lVert}{\rVert}

\let\originalleft\left
\let\originalright\right
\renewcommand{\left}{\mathopen{}\mathclose\bgroup\originalleft}
\renewcommand{\right}{\aftergroup\egroup\originalright}

\newcommand{\N}{\mathds{N}}
\newcommand{\Z}{\mathds{Z}}

\newcommand{\C}{\mathds{C}}

\newcommand{\R}{\mathds{R}}

\newcommand{\Sf}{\mathfrak{S}}

\newcommand{\Bc}{\mathcal{B}}

\newcommand{\Ec}{\mathcal{E}}
\newcommand{\Fc}{\mathcal{F}}

\newcommand{\Kc}{\mathcal{K}}
\newcommand{\Lc}{\mathcal{L}}
\newcommand{\cM}{\mathcal{M}}  
\newcommand{\Nc}{\mathcal{N}}
\newcommand{\Oc}{\mathcal{O}}

\newcommand{\Sc}{\mathcal{S}}

\newcommand{\sE}{\mathscr {E}}
\newcommand{\sH}{\mathscr{H}}

\newcommand{\meg}{\leqslant}
\newcommand{\Meg}{\geqslant}
\newcommand{\eps}{\varepsilon}
\renewcommand{\phi}{\varphi}

\newcommand{\Lin}{\mathscr{L}}

\newcommand{\la}{\langle}
\newcommand{\ra}{\rangle}
\newcommand{\sinc}{\operatorname{sinc}}

\newcommand{\ov}{\overline}

\title{Bernstein Spaces on Siegel CR Manifolds}

\date{\today}
\begin{document}

\theoremstyle{definition}
\newtheorem{deff}{Definition}[section]

\newtheorem{oss}[deff]{Remark}
\newtheorem{ass}[deff]{Assumptions}
\newtheorem{nott}[deff]{Notation}
\newtheorem{example}[deff]{Example}

\theoremstyle{plain}
\newtheorem{teo}[deff]{Theorem}
\newtheorem{lem}[deff]{Lemma}
\newtheorem{prop}[deff]{Proposition}
\newtheorem{cor}[deff]{Corollary}
\author[M. Calzi, M. M. Peloso]{Mattia Calzi, 
Marco M. Peloso}

\address{Dipartimento di Matematica, Universit\`a degli Studi di
  Milano, Via C. Saldini 50, 20133 Milano, Italy}
\email{{\tt mattia.calzi@unimi.it}}
\email{{\tt marco.peloso@unimi.it}}

\keywords{Entire functions of exponential type, quadratic CR
  manifolds, Bernstein spaces, Paley--Wiener spaces, sampling.}
\thanks{{\em Math Subject Classification 2020} Primary: 32A15, 32A37. Secondary: 32A50, 46E22.}
\thanks{The authors are members of the
  Gruppo Nazionale per l'Analisi Matematica, la Probabilit\`a e le
  loro 
Applicazioni (GNAMPA) of the Istituto Nazionale di Alta Matematica (INdAM) }
\thanks{The authors are partially supported by the 2020
  INdAM--GNAMPA grant
  {\em Fractional Laplacians and subLaplacians on Lie groups and trees}.}
\thanks{Declarations of interest: none.}
 \begin{abstract}
In this paper we introduce and study Bernstein spaces on a class of
quadratic CR manifolds, which we call {\em Siegel CR  manifolds}.
These are spaces of entire functions of exponential type whose restrictions to a given Siegel CR submanifold are $L^p$-integrable with respect to a natural measure.  For these spaces, among other results, we prove the Plancherel--P\'olya inequality, a Bernstein inequality and a sufficient condition for a sequence to be sampling. 
\end{abstract}
\maketitle

\section*{Introduction} 

Given $\kappa>0$, consider the space $\Ec_\kappa(\C)$ of entire function of
exponential type at most $\kappa$, that is,
\[
\Ec_\kappa(\C)=\Set{ f\in\Hol(\C)\colon \limsup_{w \to \infty} \frac{\log\abs{f(w)}}{\abs{w}}\meg \kappa}. 
\]
In other words, $f\in \Ec_\kappa(\C)$ if and only if for every $\eps>0$ there is a constant $C_\eps>0$ such that
\[
  \abs{f(w)}\le C_\eps \ee^{(\kappa+\eps)\abs{w}}
\]
for every $w\in \C$. Given $p\in (0,\infty]$, the classical Bernstein space $\Bc^p_\kappa(\C)$ is defined as the space of functions in $\Ec_\kappa(\C)$ whose restrictions to the real line are in $L^p(\R)$.  In other words, setting  $f_v\colon u\mapsto f(u+iv)$ for every $v\in \R$, 
\[
\Bc_\kappa^p =\Set{ f\in\Ec_\kappa(\C)\colon f_0 \in L^p(\R) },
\]
endowed with the quasi-norm 
\[
\norm{f}_{\Bc_\kappa^p}\coloneqq\norm{f_0}_{L^p}
\]
for every $f\in \Bc_\kappa^p$.
Notice that $\Bc_\kappa^p\subseteq \Bc_\kappa^\infty$ (cf.~\cite[No.\ 30]{PlancherelPolya}), so that the Phragm\'en--Lindel\"of principle implies
that  $\abs{f(w)}\le \norm{f_0}_{L^\infty}\ee^{\kappa\abs{w}}$ for every $w\in\C$ and for every $f\in \Bc^p_\kappa$. Consequently, \emph{a posteriori}, $f$ satisfies more precise growth conditions than those initially assumed.

We shall now review some basic facts on the spaces $\Bc^p_\kappa$ which are particularly relevant to this work. Namely, we shall consider: Paley--Wiener theorems; Plancherel--P\'olya inequalities;  Bernstein inequalities; sampling sequences.

We begin with the characterization of $\Bc^p_\kappa$ by means of the Fourier transform (`Paley--Wiener theorems'). As usual, it is more convenient to start with the case $p=2$. 
In this case, the space $\Bc^2_\kappa$ is the classical Paley--Wiener space $PW_\kappa$; in fact, all Bernstein spaces are sometimes reffered to as Paley--Wiener spaces, even though we shall not follow this convention.  
The space $\Bc^2_\kappa$ is particularly simple to describe in terms of the Fourier transform, since the mapping $f \mapsto \frac{1}{\sqrt{2\pi}}\widehat{f_0}$ induces an isometric isomorphism of $\Bc^2_\kappa$ onto $L^2([-\kappa,\kappa])$.
In other words:
\begin{itemize}
\item[{\tiny $\bullet$}] if $f\in \Bc^2_\kappa$, then $\supp(\widehat{f_0})\subseteq  [-\kappa,\kappa]$ and $\norm{f}_{\Bc^2_\kappa} =\frac{1}{\sqrt{2\pi}} \norm{\widehat{f_0}}_{L^2}$;

\item[{\tiny $\bullet$}] if $g\in L^2([-\kappa,\kappa])$, the function $f$ on $\C$ defined by
\[
f(z) =\frac{1}{2\pi} \int_{-\kappa}^\kappa \ee^{iz\xi} g(\xi)\, \dd\xi,
\]
for every $z\in \C$, belongs to $\Bc^2_\kappa$ and satisfies $\norm{f}_{\Bc^2_\kappa}=\frac{1}{\sqrt{2\pi}}\norm{g}_{L^2}$. 
\end{itemize}
Then, $\Bc^2_\kappa$ is a reproducing kernel hilbertian space and  its reproducing kernel is the $\sinc$-function:
\[
\C\times\C\ni (z,w) \mapsto  \frac1\pi   \frac{\sin(\kappa(z-\ov w))}{z-\ov w} \eqqcolon
\frac\kappa\pi \sinc (\kappa(z-\ov w)).  
\]

The preceding characterization of $\Bc^p_\kappa$, $p=2$, by means of the Fourier transform may then be extended to all $p\in [1,\infty]$ (and also to the case $p\in (0,1)$ with some more care,  cf.~Theorem~\ref{cor:1}).  In fact, given $p\in [1,\infty]$, a function $g\in
L^p(\R)$ is the restriction to $\R$ of a function $f\in\Bc_\kappa^p$ if and only if the Fourier transform of $g$ (in the sense of distributions) is supported in $[-\kappa,\kappa]$, see e.g.~\cite[Theorem 4]{Andersen}.

Let us now pass to Plancherel--P\'olya inequalities (cf.~\cite[No.\ 27]{PlancherelPolya}).  Given $p\in(0,\infty]$ and  $f\in\Bc^p_\kappa$,
\[
\int_{-\infty}^\infty |f_y(x)|^p \, \dd x \le \ee^{p\kappa|y|} \norm{f}_{\Bc^p_\kappa}^p
\]
for every $y\in \R$, with the obvious modification when $p=\infty$.
As a consequence, $\Bc^p_\kappa$ is complete, so that it is a Banach space when $p\in[1,\infty]$, and a quasi-Banach space when $p\in (0,1)$.

Another important property of Bernstein spaces is
the renowned {\em Bernstein inequality} and the associated
characterization, see, e.g.,~\cite{Andersen}.  Given $p\in[1,\infty]$, the following hold:
\begin{itemize}
\item[{\tiny $\bullet$}] if $f\in \Bc^p_\kappa$, then $f^{(j)}\in
\Bc^p_\kappa$ for all $j\in\N$
and $\|f^{(j)}\|_{\Bc^p_\kappa} \le \kappa^j \|f\|_{\Bc^p_\kappa}$;
\item[{\tiny $\bullet$}] conversely, if $g\in C^\infty(\R) \cap L^p(\R)$ and 
$\|g^{(j)}\|_{L^p} \le \kappa^j \|g\|_{L^p}$ for
all $j\in\N$, then $g=f_0$ for a (unique) $f\in \Bc^p_\kappa$.
\end{itemize}

Concerning sampling sequences, by~\cite[Nos.\ 40, 44]{PlancherelPolya}, for every $p\in(0,\infty]$ and $\kappa'>\kappa$ there exist two constants $C_{p,\kappa,\kappa'},C'_{p,\kappa,\kappa'}>0$ such that, for every $f\in\Bc^p_\kappa$,
\[
C_{p,\kappa,\kappa'}\norm{ f}_{\Bc^p_\kappa} \le \Big( \sum_{n\in\Z}
\abs{f(n\pi/\kappa')}^p \Big)^{1/p} \le C'_{p,\kappa,\kappa'} \norm{f}_{\Bc^p_\kappa} .
\]
We observe explicitly that also the preceding inequalities are sometimes referred to as the Plancherel--P\'olya inequalities.
When $p\in (1,\infty)$, one may take also $\kappa'=\kappa$, while when $p=2$, the classical Whittaker--Kotelnikov--Shannon theorem  implies that
$C_{2,\kappa,\kappa}=C'_{2,\kappa,\kappa}=\sqrt{\kappa/\pi}$, so that the norm of $f$ may be exactly reconstructed from its samples.
General samplings sequences for $\Bc^2_\kappa$ have also been characterized (cf.,~\cite{Beurling} for sampling sequences in $\R$, and~\cite[Theorem 10 of Chapter 6]{Seip} for sampling sequences in $\C$). 

For these and other properties of the Bernstein spaces in one complex
variable, see, e.g.,~\cite{Levin-Lectures,Young}, and~\cite{MonguzziPelosoSalvatori-fractional}
for a generalization.

\medskip

Bernstein spaces have also been investigated in the context of several complex variables. In this case though, several notions of exponential type for an entire function may be employed. As a first choice, one may say that an entire function $f$ on $\C^m$ is of exponential type at most $\kappa$ if (cf., e.g.,~\cite{PlancherelPolya})
\[
\limsup_{z\to \infty} \frac{\log\abs{f(z)}}{\sum_j \abs{z_j}}\meg \kappa.
\]
Nontheless, further insight into the resulting Bernstein spaces may be obtained if one replaces $z\mapsto \sum_j\abs{z_j}$ with an arbitrary norm $\abs{\,\cdot\,}'$ on $\R^m$, extended to $\C^m$  so that
\[
\abs{z}'=\max_{\alpha\in \C, \abs{\alpha}=1} \abs{\Re (\alpha z)}' 
\]
for every $z\in \C^m$ (cf.~\cite[Chapter III, \S\ 4]{SteinWeiss}). Notice that,  the unit ball $B\coloneqq\Set{x\in \R^m\colon
  \abs{x}'\meg 1}$ uniquely determines the unit ball in the dual
norm  (which is also its polar set),
\[
B^\circ=\Set{\lambda\in \R^m\colon \forall x\in B\:\: \abs{\langle
    \lambda, x\rangle} \meg 1}, 
\]
which is a symmetric compact convex neighbourhood of $0$ in $\R^m$
(a `convex body'). Conversely, given a convex body $K$ in $\R^m$, the
Minkowski functional (or gauge) associated with its polar 
\[
K^\circ =\Set{x\in \R^m\colon \forall \lambda\in K\:\:\abs{\langle \lambda,x\rangle}\meg 1}
\]
is the norm given by
\[
\abs{x}_{K^\circ}=\sup_{\lambda\in K} \abs{\langle \lambda, x \rangle}
\]
and coincides with the supporting function of $K$, 
see~\eqref{supp-func:def}.   One may then define the space of entire functions of type $K$ as
\[
\Ec_K(\C^m)\coloneqq \Set{f\in \Hol(\C^m)\colon \limsup_{z\to \infty} \frac{\log \abs{f(z)}}{\abs{z}_{K^\circ}}\meg 1},
\]
so that $\Ec_\kappa(\C)=\Ec_{[-\kappa,\kappa]}(\C)$ for every $\kappa>0$. The corresponding Bernstein space of type $L^p$, $p\in (0,\infty]$, is then
\[
\Bc^p_K(\R^m)\coloneqq \Set{f\in \Ec_K(\C^m)\colon f_0\in L^p(\R^m)},
\]
endowed with the norm
\[
\norm{f}_{\Bc^p_K}\coloneqq \norm{f_0}_{L^p}
\]
for every $f\in \Bc^p_K$.  We remark explicitly that 
\[
\abs{z}_{[-1,1]^m}\meg\sum_{j=1}^m \abs{z_j}
\]
for every $z\in \C^m$, with equality when $z\in \alpha \R^m$ for some
$\alpha\in \C$. Even though the resulting space
$\Ec_{[-\kappa,\kappa]^m}(\C^m)$ is therefore \emph{strictly smaller}
than the space of entire functions of exponential type at most
$\kappa>0$ (according to the above definition,
cf.~\cite{PlancherelPolya}) when $m>1$, the resulting Bernstein spaces
are the same, thanks to Theorem~\ref{prop:7}.

The Paley--Wiener space $\Bc^2_K$ then consists precisely of the holomorphic extensions of the Fourier transforms of the elements of $L^2(K)$, with equality of norms (up to a multiplicative constant). More precisely, (cf., e.g.,~\cite[Chapter III.4]{SteinWeiss}) 
\begin{itemize}
\item[{\tiny $\bullet$}] if $f\in \Bc^2_K(\R^m)$, then $\supp(\widehat{f_0})\subseteq  K$ and $\|f\|_{\Bc^2_K} =\frac{1}{(2\pi)^{m/2}} \|\widehat{f_0}\|_{L^2}$;
\item[{\tiny $\bullet$}] if $g\in L^2(K)$, the function $f$ on $\C^m$ defined by
  \[
  f(z) =\frac{1}{(2\pi)^m} \int_{K} \ee^{iz\xi} g(\xi)\, \dd\xi,
  \]
  for every $z\in \C^m$,  belongs to $\Bc^2_K(\R^m)$ and satisfies $\|f\|_{\Bc^2_K} =\frac{1}{(2\pi)^{m/2}}    \|g\|_{L^2}$. 
\end{itemize}
Analogues of the Paley--Wiener theorems for $\Bc^p_K(\R^m)$, $p\in [1,\infty]$, and of the Plancherel--P\'olya inequalities, $p\in (0,\infty]$, may be considered as \emph{folklore}. 
Explicitly, given $p\in [1,\infty]$, a function $g\in L^p(\R^m)$ is the restriction to $\R^m$ of an element of $\Bc^p_K(\C^m)$ if and only if its Fourier transform $\widehat g$ is supported in $K$. In addition,  for every $p\in(0,\infty]$ and for every $f\in\Bc^p_K(\R^m)$,
\[
\int_{\R^m} |f_y(x)|^p \, \dd x \le \ee^{p|y|_{K^\circ}}\|f\|_{\Bc^p_K}^p
\]
for all $y\in \R^m$, with the obvious modification if $p=\infty$, and where $f_y\colon x \mapsto f(x+i y)$ for every $y\in \R^m$.

In connection with  the Bernstein inequalities, we mention also the so-called \emph{real} Paley--Wiener theorems, where the support of the Fourier transform of a tempered distribution  $u$ is characterized by means the derivatives of  $u$. Cf.~\cite{AndersenDejeu}, also for an extensive review of the literature.
In particular, for $p\in [1,\infty]$ and $f\in L^p(\R^m)$ with $\supp(\widehat f)$  bounded, one may show that
\[
\lim_{k\to \infty} \norm{P(-i \partial)^k f}^{1/k}_{L^p(\R^m)}= \max_{\supp(\widehat f)} \abs{P} 
\]
for every polynomial $P$ on $\R^m$ (cf.~\cite[Theorem 2.9]{AndersenDejeu}). This is sufficient, as $P$ varies, to characterize $\supp(\widehat f)$.

Let us now pass to sampling sequences. When $K$ is a hypercube (hence, by a linear transformation, a non-degenerate parallelotope) centred at the origin, the one-dimensional theory may be extended arguing by induction, whence a precise determination of the $\kappa>0$ for which $\kappa \Z^m$ is a sampling set for $\Bc_K^p$ (cf.~\cite[No.\ 47, Theorem III]{PlancherelPolya}), and the natural analogue of the Whittaker--Kotelnikov--Shannon theorem. If, otherwise, $K$ is an arbitrary symmetric body, more precise sufficient conditions should take into account the shape of $K$ (more precisely, of its polar), cf.~\cite{OlevskiiUlanovskii}.

When $K$ is an arbitrary symmetric body, necessary conditions for a sequence in $\R^m$ to be sampling for $\Bc^2_K$ were determined by Landau~\cite{Landau} in terms of the Beurling lower densities. Actually, Landau obtained necessary conditions
for a sequence in $\R^m$ to be sampling  for the space of $f\in L^2(\R^m)$ with 
$\supp(\widehat{f})\subseteq S$ in terms of the Beurling lower density, where $S\subseteq \R^m$ is any compact set.  This result has been later generalized in several contexts, cf.~\cite{Romeroetal} for a very general formulation and a review of the literature.

\section{Statement of the Main Results}

In this paper we extend the definition of Bernstein spaces and study
their properties along the lines described above.
We  fix a complex hilbertian space $E$ of dimension $n$, a real
hilbertian space $F$ of dimension $m$, and a hermitian map $\Phi\colon
E\times E\to F_\C$.  
We also write $\Phi(\zeta)$ instead of $\Phi(\zeta,\zeta)$,
$\zeta\in E$, to simplify the notation.  We consider the associated
{\em quadratic CR submanifold} $\cM$ of $E\times  F_\C$ given by
\begin{equation}\label{cM-definition}
\cM =  \Set{ (\zeta,z)\in E\times  F_\C\colon \Im z=\Phi(\zeta)}.
\end{equation}
Then, $\cM$ is a CR manifold of CR-dimension $n$ and real codimension
$m$. Cf., e.g.,~\cite{Boggess} for an extensive treatment of CR
manifolds, and also~\cite{PR,Calzi3} for quadratic (or quadric) CR
manifolds.  Analysis and geometry on quadratic CR manifolds is
motivated by the fact that these manifolds constitute local models for
all CR manifolds. Besides that, quadratic CR manifold have attracted
the interest of several authors in their own right, cf.,
e.g.,~\cite{Forstneric,Boggess2,PR,BR,BR1,BR2,BR3,Calzi3,Calzi4},   and
references therein.  

The manifold $\cM$ can be canonically identified with a $2$-step
nilpotent Lie group $\Nc$ as follows. 

\begin{deff}\label{Nc:def}
We define $\Nc\coloneqq E\times F$, endowed with the $2$-step nilpotent Lie group structure given by the product
\[
(\zeta,x)(\zeta',x')\coloneqq (\zeta+\zeta', x+x'+2 \Im \Phi(\zeta,\zeta'))
\]
for $(\zeta,x),(\zeta',x')\in E\times F$.
\end{deff}

It turns out that $\Nc$ acts freely and affinely on the complex space $E\times F_\C$, by
\[
(\zeta,x)\cdot (\zeta',z')\coloneqq (\zeta+\zeta',x+i\Phi(\zeta)+z'+2 i\Phi(\zeta',\zeta))
\]
In particular, $\Nc$ acts simply transitively on the CR submanifold $\cM=\Nc\cdot \vect{0}$, with which it may therefore be identified. We shall also say that $\Nc$, with the CR structure induced by $\cM$, is a quadratic CR manifold. 

We note that the Lebesgue measure on $\Nc=E\times F$ is a bi-invariant Haar measure; we shall denote it by $\dd(\zeta,x)$ in integrals. 
For every $p\in(0,\infty]$, we denote by $L^p(\Nc)$ the standard Lebesgue space
with respect to this measure.  We also denote by $Q\coloneqq2n+2m$ the {\em  homogeneous dimension} of $\Nc$  with respect to the dilations $t \cdot (\zeta,x)\coloneqq (t \zeta,t^2 x)$. 

We now introduce Siegel CR submanifolds, which are those on which non-trivial Bernstein spaces of type $L^p$, $p\in (0,\infty)$, may be defined (cf.~\cite[Corollary 5.9]{Calzi3} and  Theorem~\ref{cor:1}). 

\begin{deff}\label{Siegel-manifold-deff}
Define
\[
\Lambda_+\coloneqq \Set{\lambda\in F'\colon \forall \zeta\in
  E\setminus \Set{0}\:\: \la \lambda , \Phi(\zeta) \ra>0}.
\] 
We say that $\cM$ is a \emph{Siegel CR submanifold} of $E\times F_\C$ is $\Lambda_+\neq \emptyset$. In this case, we say that $\Nc$ is a \emph{Siegel CR manifold}.
\end{deff}

We recall that, if $A$ is a subset of $F$, then its polar is
\[
A^\circ\coloneqq \Set{\lambda\in F'\colon \forall x\in A\:\:\langle \lambda, x\rangle\Meg -1},
\]
so that $A^\circ$ is a closed convex subset of $F'$ containing $0$ (cf.~\cite[Definition 2 of Chapter II, \S\ 6, No.\ 3]{BourbakiTVS}). The polar of a subset of $F'$ is defined analogously (as a subset of $F$), so that $A^{\circ\circ}$ is the closed convex envelope of $A\cup \Set{0}$ by the bipolar theorem (cf.~\cite[Theorem 1 of Chapter II, \S\ 6, No.\ 3]{BourbakiTVS}). 

\begin{oss}\label{CR-remark}
Notice that $\Lambda_+$ is an open convex cone in $F'$. In addition, if $\Lambda_+\neq \emptyset$,  then  $\Phi$ is non-denegerate and $\Lambda_+$ is the interior of the polar of $\Phi(E)$, so that $\Phi(\zeta)\in \Lambda_+^\circ$ for every $\zeta\in E$, (cf., e.g.,~\cite[Proposition 2.5]{Calzi3}). 
Furthermore, $\Phi$ is $\overline\Omega$-positive for every open convex cone $\Omega$ not containing affine lines and
such that $\Omega'\subseteq \Lambda_+$, where $\Omega'$ denotes the
dual of $\Omega$, that is, the interior of its polar. In particular,
for every such $\Omega$, 
\[
D_\Omega\coloneqq \Set{(\zeta,z)\in E\times F_\C\colon \Im z-\Phi(\zeta)\in \Omega}
\]
is a Siegel domain whose \v Silov boundary is $\cM$. 
\end{oss}

\begin{oss}\label{foliation-remark}
Notice that the space $E\times F_\C$ can be foliated by copies of $\cM$, hence of $\Nc$, since
\[
(\zeta,z)= (\zeta, x+i\Phi(\zeta)+ih)
\]
where $h=\Im z-\Phi(\zeta) \eqqcolon\rho(\zeta,z)\in F$.  Then,
\[
E\times F_\C = \bigcup_{h\in F} (\cM+(0,ih) )=  \bigcup_{h\in F}
\Set{ (\zeta,z+ih)\colon (\zeta,z)\in\cM, h\in F}.
\]
Given a function $f$ defined on $E\times F_\C$, we denote by $f_h$ its restriction to
$\cM+ih$, interpreted as a function on $\Nc$. In other words, for $(\zeta,x)\in\Nc$ and $h\in F$, we write
\[
f_h(\zeta,x) = f(\zeta, x+i\Phi(\zeta)+ih).
\]
\end{oss}

For some general facts on quadratic CR manifolds see~\cite{Boggess,PR}, and for a recent account of analysis on Siegel Type II domain see~\cite{CalziPeloso,CP1,CP2}. Fourier analysis on $\Nc$ plays a relevant role in the study of various function spaces of holomorphic functions on $D$, and in this work.  For more on this, see also~\cite{Calzi,Calzi2}. \medskip

Given a compact convex subset $K$ of $F'$ (not  necessarily {\em symmetric}), we  denote by $H_K $ the supporting  function of $K$: 
\begin{equation}\label{supp-func:def}
H_K (h) =\sup_{\lambda\in -K} \la\lambda, h\ra \in [-\infty,\infty),
\end{equation}
see, e.g.,~\cite[Exercise 9 of Chapter II, \S\ 6]{BourbakiTVS} or~\cite[Section 4.3]{Hormander} (where a slightly different convention is adopted). In particular, $H_\emptyset(h)=-\infty$ and $H_K(h)>-\infty$ for every every non-empty compact convex subset $K$ of $F'$, and for every $h\in F$.  The trivial case $K=\emptyset$ is considered for notational convenience.
Further, $H_K$ is convex, continuous, and sub-additive.
Observe that, if $0\in K$, then $H_K$ is the Minkowski functional (or gauge) associated with $K^\circ$. In particular,  if $K$ is a symmetric body, then $H_K(h)= |h|_{K^\circ} $ for every $h\in F$.
We observe explicitly that, if $K_1,K_2$ are two compact convex subsets of $F'$, then
\[
H_{K_1+K_2}=H_{K_1}+H_{K_2}.
\]
In particular,
\begin{equation}\label{eq:1}
H_{\lambda+K}(h)=H_K(h)-\langle \lambda,h\rangle\qquad \text{and}\qquad H_{K+\overline B_{F'}(0,r)}(h)= H_K(h)+r \abs{h}
\end{equation}
for every $h\in F$, for every $\lambda\in F'$, and for every $r>0$.

On $E\times F_\C$ we introduce a modified space of entire functions of
exponential type $K$.

\begin{deff}\label{exp-type-Kirc-deff}
Let $K\subseteq F'$ be a compact convex  set and let $H_K$ be as in~\eqref{supp-func:def}.  We define a modified space $\widetilde \Ec_K (E\times F_\C,\Nc)$ of entire functions of exponential type $K$ as 
\[
\Set{f\in\Hol(E\times F_\C)\colon \forall \eps>0\: \exists C_\eps>0\: \forall (\zeta,z)\in E\times F_\C \:\: \abs{f(\zeta,z)}\meg C_\eps \ee^{C_\eps(\abs{\zeta}^2+\abs{\Re z})+H_{K+\overline B_{F'}(0,\eps)}(\rho(\zeta,z))} }.
\] 
\end{deff}

Observe that, if $K$ is a symmetric body and $E=\Set{0}$, then $\Ec_K(F_\C)$ is a \emph{proper} subset of $\widetilde\Ec_K(F_\C, F)$, since the elements of the latter space are allowed to grow faster in the directions of $F$.  
Notice that, since $K$ is now allowed to be non-symmetric, $H_K$ is no longer a norm, so that it may not be `canonically' extended to $F_\C$. Further, when $E\neq \Set{0}$, the dependence of $h=\rho(\zeta,z)$ on $(\zeta,z)$ is quadratic in $\zeta$, so that we chose to allow a quadratic exponential growth in the directions of $E$. As the proof of Theorem~\ref{prop:6} shows (cf.~Remark~\ref{oss:1}), though, a much faster growth may be allowed without changing the definition of the corresponding Bernstein spaces.
The reason why non-symmetric $K$ are considered lies in  Theorem~\ref{cor:1}.

We now define the Bernstein spaces we are interested in.
\begin{deff}
Take $p\in (0,\infty]$ and a compact convex subset $K$ of
$F'$. We define the  Bernstein space 
\[
\Bc^p_K(E\times F_\C,\Nc)\coloneqq \Set{f\in \widetilde\Ec_K(E\times F_\C,\Nc)\colon f_0\in  L^p(\Nc)  },
\]
endowed with the norm
\[
\norm{f}_{\Bc^p_K}\coloneqq \norm{f_0}_{L^p(\Nc)} 
\]
for every $f\in \Bc^p_K(E\times F_\C,\Nc)$. We shall simply write $\Bc^p_K(\Nc)$ or $\Bc^p_K$ if no ambiguity may arise.
\end{deff}

The above definition extends the one introduced in~\cite{MonguzziPelosoSalvatori} in the case in which $\Nc$ coincides with the Heisenberg group, that is, $n\ge1$ and $m=1$. 
Notice that, when $E=\Set{0}$,  multiplication by $\ee^{i\langle
  \lambda_\C,\,\cdot\,\rangle}$ induces an isometric isomorphism of
$\Bc^p_K(F)$ onto $\Bc^p_{\lambda+K}(F)$, so that the classical theory
trivially allows to deal with the case in which $K$ is a compact
convex set admitting a centre of symmetry. Notice, further, that
multiplication by the function $(\zeta,z)\mapsto\ee^{i\langle
  \lambda_\C,z\rangle}$ induces a bounded operator from $\Bc^p_K(\Nc)$
into $\Bc^p_{\lambda+K}(\Nc)$ if and only if
$\lambda\in \Phi(E)^\circ$ (cf.~Proposition~\ref{prop:11}). 
We can now state our main results.

\begin{teo}[Plancherel--P\'olya Inequality]\label{prop:6}
Let $K$ be a compact convex subset of $F'$ and take $p\in (0,\infty]$. Then, 
\[
\norm{f_h}_{L^p(\Nc)}\meg  \ee^{H_{K}(h)} \norm{f_0}_{L^p(\Nc)}
\]
for every $f\in  \Bc^p_{K }(\Nc)$  and for every $h\in F$.
\end{teo}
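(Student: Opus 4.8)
The natural approach is to fix $h \in F$, note that both sides involve the restriction of $f$ to a copy of $\Nc$, and try to realize $f_h$ as a ``harmonic-majorant'' or Poisson-type average of $f_0$ against a measure of total mass $\ee^{H_K(h)}$. Concretely, I would first reduce to the case $p < \infty$ (the case $p=\infty$ being a direct pointwise estimate using the growth bound in Definition~\ref{exp-type-Kirc-deff} together with a Phragmén--Lindelöf argument). Then, the key object is the one-parameter family of slices $t \mapsto f_{th}$, $t \in [0,1]$ (or even $t \in \R$), which, because $f$ is entire, should be controlled by a subharmonicity/log-convexity principle: the function $t \mapsto \log \|f_{th}\|_{L^p(\Nc)}$ (when finite) ought to be convex in $t$, and more precisely one expects
\[
\|f_{th}\|_{L^p(\Nc)} \le \ee^{H_{tK}(h)}\|f_0\|_{L^p(\Nc)} = \ee^{t H_K(h)}\|f_0\|_{L^p(\Nc)}
\]
by an interpolation-type bound along the ``complex'' direction $z \mapsto z + ith$, exploiting that shifting in $\Im z$ by $th$ costs exactly $\ee^{H_{tK}(h)}$ at the level of exponential type (this is where the additivity $H_{K_1+K_2} = H_{K_1}+H_{K_2}$ and the scaling of $H$ enter).

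The cleanest way to make this rigorous is to pass through the Fourier/Laplace transform side. I would introduce, for suitable $f$, the group Fourier transform (or, more elementarily, the partial Fourier transform in the central variable $x \in F$) of $f_0$, and show — as in the $E=\{0\}$ classical case and as surely established in earlier sections of the paper — that the spectrum of $f_0$ (the support of this transform, in the appropriate operator sense on $\Nc$) is confined to a region governed by $K$; then $f_h$ is obtained from $f_0$ by multiplication by $\ee^{-\langle \cdot, h\rangle}$ on the spectral side, and the operator norm of this multiplier on $L^p$ is bounded by $\sup$ of $\ee^{-\langle\lambda,h\rangle}$ over the spectrum, which is $\ee^{H_K(h)}$ by definition~\eqref{supp-func:def}. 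For $p=2$ this is an exact Plancherel computation; for general $p \in (0,\infty]$ one upgrades it by writing $f_h$ as a convolution of $f_0$ against a fixed kernel (independent of the particular $f$) whose $L^1$-norm — or, for $p<1$, whose $L^p$-quasinorm after the usual $p$-triangle-inequality bookkeeping — is at most $\ee^{H_K(h)}$, and then applying Young's inequality. A convenient device to produce such a kernel with sharp mass is to approximate: replace $K$ by $K+\overline B_{F'}(0,\eps)$, use the strict positivity/regularity this buys to get a genuine integrable reproducing kernel, obtain the bound with $\ee^{H_{K+\overline B(0,\eps)}(h)} = \ee^{H_K(h)+\eps|h|}$, and let $\eps \to 0^+$.

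The main obstacle, and where the real work lies, is precisely the non-commutative geometry: on $\Nc = E \times F$ the translation $f_0 \mapsto f_h$ does \emph{not} correspond to a scalar multiplier on an ordinary Fourier transform but to an operation on the operator-valued group Fourier transform, so one must control the relevant family of Bargmann-type or Hermite-type representations and verify that the growth hypothesis in Definition~\ref{exp-type-Kirc-deff} — with its genuinely quadratic-in-$\zeta$ allowance — still forces the spectral localization that yields the factor $\ee^{H_K(h)}$ and nothing worse. Handling the quasi-Banach range $p \in (0,1)$ adds a further wrinkle, since Young's inequality must be replaced by its $p$-normed analogue and one must be careful that the approximation argument does not lose the sharp constant; I expect this to be dispatched by the standard trick of factoring $f$ (via a Blaschke-type or entire-function division argument adapted to the several-variable exponential-type setting, cf.\ the $\Bc^{p/2}$ considerations) or by directly invoking the subharmonicity of $|f|^p$ along the foliation in Remark~\ref{foliation-remark}.
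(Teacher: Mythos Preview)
Your proposal heads in a genuinely different direction from the paper, and the main route you sketch has a real gap.

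The paper's proof is far more elementary and bypasses the non-commutative structure entirely. It fixes $h\in\partial B_F(0,1)$ and, for each $(\zeta,x)\in E\times(\R h)^\perp$, considers the one-variable entire function
\[
f^{(\zeta,x,h)}\colon w\mapsto f(\zeta,x+i\Phi(\zeta)+wh).
\]
From the growth bound in Definition~\ref{exp-type-Kirc-deff} this function lies in the classical one-dimensional space $\Bc^p_{K_{\eps,h}}(\R)$, where $K_{\eps,h}\subseteq\R$ is the interval obtained by projecting $K_\eps$ onto the dual of $\R h$. The classical one-dimensional Plancherel--P\'olya inequality (Phragm\'en--Lindel\"of plus~\cite[No.\ 27]{PlancherelPolya}) then gives the bound on each slice, and Fubini reassembles the $L^p(\Nc)$ norms; letting $\eps\to 0$ finishes. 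Your worry about operator-valued Fourier transforms and Bargmann representations is a red herring: the shift $f_0\mapsto f_h$ moves only in the central variable $F$, so slicing along $\R h$ is fully compatible with the product measure on $\Nc=E\times F$.

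Your convolution/Young approach, by contrast, cannot deliver the sharp constant $\ee^{H_K(h)}$. Already in the abelian case $E=\{0\}$, $F=\R$, $K=[-\kappa,\kappa]$, the kernel you propose is $\Fc^{-1}(\chi_K\,\ee^{-(\cdot)h})$, which is \emph{not} in $L^1(\R)$ (since $\chi_K$ is discontinuous); after smoothing, its $L^1$-norm --- i.e.\ the $L^1$- or $L^\infty$-multiplier norm --- is strictly larger than $\sup_{\lambda\in K}\ee^{-\lambda h}=\ee^{H_K(h)}$. Only for $p=2$ does the multiplier bound give the sharp constant. The sharp classical Plancherel--P\'olya inequality for $p\neq 2$ is proved by complex-analytic means (Phragm\'en--Lindel\"of, subharmonicity of $\abs{f}^p$), not by Young's inequality; this is precisely what the paper invokes in its one-dimensional Step~I. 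You do mention subharmonicity of $\abs{f}^p$ at the very end, but only as a fallback for $p<1$, not as the backbone of the argument.

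There is also a circularity issue: you assume that the spectral confinement of $f_0$ to $K$ is ``surely established in earlier sections''. In this paper it is not --- the Paley--Wiener theorem (Theorem~\ref{cor:1}) is proved \emph{using} Theorem~\ref{prop:6}, because Proposition~\ref{prop:1} requires polynomial growth, whereas Definition~\ref{exp-type-Kirc-deff} allows exponential growth in $\abs{\zeta}^2+\abs{\Re z}$.
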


As a consequence of the preceding result and~\cite[Proposition 5.8 and Corollaries 5.9 and 5.10]{CalziPeloso}, we may now indicate Paley--Wiener theorems for the spaces $\Bc^p_K$ and draw some consequences. 
We need to introduce some more notation.

For every $w\in E$ we denote by $Z_w$ the the left-invariant vector field on $\Nc$ that coincides with the Wirtinger derivative $\frac12 (\partial_w -i\partial_{i w})$ at the origin. 
Explicitly, $Z_w=\frac12 (\partial_w -i\partial_{i w})+ i\Phi(w,\,\cdot\,)\partial_F$, that is, 
\begin{align*}
Z_w f(\zeta,x) &= \frac12 (\partial_w -i\partial_{i w})f(\zeta,x) +i(\partial_{\Re \Phi(w,\zeta)}f)(\zeta,x) - (\partial_{\Im \Phi(w,\zeta)}f)(\zeta,x)
\end{align*}
for any $f\in C^1(\Nc)$ and every $(\zeta,x)\in\Nc$.  

We recall the definition of CR functions and distributions on $\Nc$.

\begin{deff}
Let $f$ be a function of class $C^1$ (or, more generally, a distribution) on $\Nc$. Then, $f$ is CR if $\overline{Z_w} f=0$ for every $w\in E$.
\end{deff}

For technical reasons, it will be convenient to consider the following space $\Oc_K(\Nc)$ of smooth functions on $\Nc$. We denote by $\Fc_F$ the (commutative) Fourier transform on $F$. 

\begin{deff}\label{OK:def}
Given a compact subset  $K$ of $F'$, define
\[
\Oc_K(\Nc) \coloneqq\Set{f\in C^\infty(\Nc)\cap \Sc'(\Nc)\colon \text{$f$ is CR, }  \forall \zeta\in E\:\:\Fc_F[f(\zeta,\,\cdot\,)] \text{ is supported in } K}.
\]
\end{deff}
For more on this space, see~\cite{Calzi3}. 

\begin{teo}\label{cor:1}
Let $K$ be a compact convex subset of $F'$ and take $p\in (0,\infty]$. Then, the following hold:
\begin{enumerate}
\item[\textnormal{(1)}] the mapping $f \mapsto f_0$ induces an isomorphism of $\Bc^p_K(\Nc)$ onto $\Oc_K(\Nc)\cap L^p(\Nc)$ (endowed with the topology induced by $L^p(\Nc)$);

\item[\textnormal{(2)}] if $p<\infty$, then $\Bc^p_K(\Nc)=\Bc^p_{K\cap \overline{\Lambda_+}}(\Nc)$, while $\Bc^\infty_K(\Nc)=\Bc^\infty_{K\cap \Phi(E)^\circ}(\Nc)$;

\item[\textnormal{(3)}] there are a complex subspace $E_1$ of $E$ and a real subspace $F_1$ of $F$ such that $ F_1^\circ$ is the vector space generated by $K\cap \Phi(E)^\circ$, $\Nc_1=E_1\times F_1$ is a normal subgroup of $\Nc$,  $\Nc/\Nc_1$ is a \emph{Siegel} CR manifold, and the mapping
\[
\Bc^\infty_K(\Nc/\Nc_1)\ni f \mapsto f \circ \pi\in \Bc^\infty_K(\Nc)
\]
is an isomorphism, where $\pi\colon E\times F_\C\to E\times F_\C/(E_1\times F_1)_\C $ is the canonical projection. 
\end{enumerate}
\end{teo}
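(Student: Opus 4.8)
\textbf{Proof sketch for Theorem~\ref{cor:1}.}

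The plan is to reduce everything to the Fourier-analytic description of CR functions on $\Nc$ whose partial Fourier transforms are supported in $K$, exploiting the cited results~\cite[Proposition 5.8, Corollaries 5.9 and 5.10]{CalziPeloso} together with the Plancherel--P\'olya inequality of Theorem~\ref{prop:6}. For part~(1), the first step is to show that $f\mapsto f_0$ maps $\Bc^p_K(\Nc)$ into $\Oc_K(\Nc)\cap L^p(\Nc)$: holomorphy of $f$ on $E\times F_\C$ forces $\overline{Z_w}f_h=0$ for every $h$ (the tangential Cauchy--Riemann equations on each leaf $\cM+ih$), so $f_0$ is CR, and the exponential-type bound in Definition~\ref{exp-type-Kirc-deff} combined with a contour-shift/Paley--Wiener argument in the $F_\C$-variable shows that $\Fc_F[f_0(\zeta,\,\cdot\,)]$ is supported in $K+\overline B_{F'}(0,\eps)$ for every $\eps>0$, hence in $K$. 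The reverse direction — that every $g\in\Oc_K(\Nc)\cap L^p(\Nc)$ extends to some $f\in\widetilde\Ec_K$ with $f_0=g$ — is where~\cite{CalziPeloso} does the real work: one defines $f_h$ via the inverse Fourier transform of $e^{-\langle\cdot,h\rangle}$ times $\Fc_F[g(\zeta,\cdot)]$ (a bounded multiplier on the support $K$), checks that $(\zeta,z)\mapsto f_{\rho(\zeta,z)}(\zeta,\Re z)$ is holomorphic (using the CR condition to glue the leaves), and verifies the growth estimate of Definition~\ref{exp-type-Kirc-deff} from $\sup_{K}e^{-\langle\cdot,h\rangle}=e^{H_K(h)}$ plus the polynomial control coming from $g\in\Sc'$. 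Continuity of $f\mapsto f_0$ is immediate from the definition of the norm; continuity of the inverse follows once $\Bc^p_K$ is known to be complete (Theorem~\ref{prop:6} gives this, as in the classical case) via the closed graph theorem, or directly from a reproducing-formula estimate.

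For part~(2), the point is that although the \emph{definition} of $\widetilde\Ec_K$ only sees $K$ through $H_K$, the Bernstein space only sees the part of $K$ that can actually carry an $L^p$ (resp.\ $L^\infty$) CR function. Given $g\in\Oc_K(\Nc)\cap L^p(\Nc)$, each $\Fc_F[g(\zeta,\,\cdot\,)]$ is supported in $K$; the CR condition $\overline{Z_w}g=0$ transcribes, on the Fourier side in $F$, into a system relating the $\zeta$-derivatives to multiplication by $\langle\lambda,\Phi(w,\cdot)\rangle$-type symbols, and the corresponding ``heat-type'' equations force the Fourier support to lie in $\overline{\Lambda_+}$ when $p<\infty$ (this is exactly~\cite[Corollary 5.9]{Calzi3}, i.e.\ the reason Siegel manifolds are the right setting: outside $\overline{\Lambda_+}$ there are no nonzero $L^p$ CR functions for finite $p$), and in $\Phi(E)^\circ$ when $p=\infty$ (the closed, not open, polar, cf.\ Remark~\ref{CR-remark} and~\cite[Corollary 5.10]{CalziPeloso}). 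Hence $\Fc_F[g(\zeta,\,\cdot\,)]$ is in fact supported in $K\cap\overline{\Lambda_+}$ (resp.\ $K\cap\Phi(E)^\circ$), so $g\in\Oc_{K\cap\overline{\Lambda_+}}(\Nc)$ (resp.\ $\Oc_{K\cap\Phi(E)^\circ}(\Nc)$), and part~(1) closes the loop. One should check that $K\cap\overline{\Lambda_+}$ and $K\cap\Phi(E)^\circ$ are again compact convex, which is clear, and that the inclusion $\Bc^p_{K\cap\overline{\Lambda_+}}\subseteq\Bc^p_K$ is isometric, which is immediate from the norm being $\|f_0\|_{L^p}$.

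For part~(3), first take $F_1^\circ\coloneqq\operatorname{span}(K\cap\Phi(E)^\circ)$ and let $E_1\coloneqq\Phi^{-1}$-saturation of $F_1$ in the appropriate sense — concretely, $E_1$ should be the largest complex subspace of $E$ with $\Phi(E_1,E)\subseteq (F_1)_\C$, so that $\Nc_1=E_1\times F_1$ is a subgroup; one checks normality from the group law in Definition~\ref{Nc:def} (the bracket lands in $F$, and $2\Im\Phi(\zeta,\zeta')\in F_1$ whenever $\zeta$ or $\zeta'\in E_1$). That $\Nc/\Nc_1$ is Siegel amounts to showing $\Lambda_+(\Nc/\Nc_1)\neq\emptyset$: the quotient's form is the induced $\Phi$ on $E/E_1$ with values in $F/F_1$, and any $\lambda\in F'$ vanishing on $F_1$ and lying in the relative interior of the polar of $K\cap\Phi(E)^\circ$ inside $F_1^\circ$ descends to a positive functional, which is where one uses that $K\cap\Phi(E)^\circ$ spans $F_1^\circ$ and that $\Phi(E)\subseteq\Lambda_+^{\circ}$-type positivity holds on the quotient. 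Finally, the isomorphism $\Bc^\infty_K(\Nc/\Nc_1)\ni f\mapsto f\circ\pi$: $f\circ\pi$ is holomorphic and satisfies the same growth bound since $H_K(\rho)$ only depends on the $F/F_1$-component of $\rho$ (as $K\perp F_1$, because $K\subseteq K\cap\Phi(E)^\circ + (\text{rest})$ — more precisely one must first replace $K$ by $K\cap\Phi(E)^\circ$ using part~(2), so $K\subseteq F_1^\circ$ and $H_K$ factors through $F/F_1$); and $\|f\circ\pi\|_{\Bc^\infty_K}=\|f\|_{\Bc^\infty_K}$ because the Haar measure of $\Nc$ is the product and $f\circ\pi$ is constant along the $\Nc_1$-cosets, so the sup over $\Nc$ equals the sup over $\Nc/\Nc_1$. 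Surjectivity onto $\Bc^\infty_K(\Nc)=\Bc^\infty_{K\cap\Phi(E)^\circ}(\Nc)$ follows because any $g=f_0$ with $\Fc_F[g(\zeta,\cdot)]$ supported in $K\subseteq F_1^\circ$ is, after the CR reduction on the Fourier side, annihilated by the directional derivatives tangent to $\Nc_1$ (those derivatives correspond to Fourier multipliers supported away from $F_1^\circ$, hence vanish), so $g$ is $\Nc_1$-invariant and descends.

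\textbf{Main obstacle.} The genuinely hard part is not the formal bookkeeping in~(3) but the passage, in the ``hard'' direction of~(1) and in~(2), from the pointwise growth hypothesis of Definition~\ref{exp-type-Kirc-deff} to the sharp Fourier-support statement and back — i.e.\ proving that the naive Fourier-inversion candidate is genuinely \emph{holomorphic} across all the leaves $\cM+ih$ simultaneously and obeys the claimed estimate with $H_K$ rather than $H_{K+\overline B(0,\eps)}$ on the nose. This is exactly the content imported from~\cite[Proposition 5.8, Corollaries 5.9, 5.10]{CalziPeloso} and~\cite[Corollary 5.9]{Calzi3}; the role of Theorem~\ref{prop:6} is to furnish completeness of $\Bc^p_K$ and the uniform control of $\|f_h\|_{L^p}$ needed to run the Fubini/holomorphy argument. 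Once those are granted, (2) and (3) are essentially structural consequences, with the only subtlety being the definition of $E_1$ and the verification that $\Nc/\Nc_1$ is again Siegel.
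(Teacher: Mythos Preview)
Your overall strategy matches the paper's: establish (1) via the Paley--Wiener--Schwartz correspondence between $\Bc^p_K$ and $\Oc_K\cap L^p$ (packaged in the paper as Proposition~\ref{prop:1}), then read off (2) and (3) from the structural results on $\Oc_K\cap L^p$ in~\cite{Calzi3}. The paper's proof of (2)--(3) is literally a one-line citation of~\cite[Proposition 5.7, Corollaries 5.9 and 5.10]{Calzi3}; your attempt to unpack (3) explicitly (constructing $E_1$, checking normality and the Siegel property) is more ambitious but correspondingly sketchier---your description of $E_1$ is imprecise and the Siegel verification hand-waved---so in the end both accounts rest on the same external input.

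There is one genuine gap in your forward direction of~(1). You propose to obtain $\supp \Fc_F[f_0(\zeta,\cdot)]\subseteq K$ directly from the growth bound in Definition~\ref{exp-type-Kirc-deff} via a contour-shift/Paley--Wiener argument. But that definition permits $\abs{f(\zeta,z)}\meg C_\eps\, \ee^{C_\eps(\abs{\zeta}^2+\abs{\Re z})}$ on the leaf $h=0$, so $f_0(\zeta,\cdot)$ is not even a tempered distribution \emph{a priori}, and its Fourier transform is undefined. The paper resolves this by invoking Theorem~\ref{prop:6} \emph{before} the Paley--Wiener step: Plancherel--P\'olya (together with the embedding $\Bc^p_K\subseteq \Bc^\infty_K$ of Theorem~\ref{prop:7}) upgrades the growth to $\abs{f(\zeta,z)}\meg C\,\ee^{H_K(\rho(\zeta,z))}$, which is exactly the hypothesis of Proposition~\ref{prop:1}(1). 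You do cite Theorem~\ref{prop:6}, but only as supplying completeness and $L^p$ control for the \emph{reverse} direction; its role in the forward direction---turning the raw exponential-type bound into a tempered one---is the crux and should be stated explicitly.
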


In particular, the spaces $\Bc^p_K(\Nc)$, for $p<\infty$, are
non-trivial if and only if $K\cap \Lambda_+$ has a non-empty interior,
while the study of the space $\Bc^\infty_K(\Nc)$ may be essentially
reduced to the case in which $K\cap \Lambda_+$ has a non-empty
interior.

Therefore, from now on, 
we shall always assume that $\Nc$ is a \emph{Siegel} CR manifold, that
is, that $\Lambda_+\neq \emptyset$.  Since our results hold (possibly with slightly different proofs) also in the general case, we shall not recall this assumption in the statements.   We point
out that Siegel CR manifolds appear in a very natural way in the
analysis of the regularity of the Kohn Laplacian and they are precisely the quadratic CR manifolds
whose Kohn Laplacian is non-solvable on $0$-forms, or equivalently
that its $L^2$-null-space is not trivial, see~\cite[Theorem 1]{PR}.

We remark explicitly that, if $E=\Set{0}$, then $\Nc=F$ is abelian, $\Lambda_+=F'$, and $\Bc^p_K(\Nc)$ reduces to the classical Bernstein space considered in the Introduction. 

We now consider some examples, in which one may easily infer when
the spaces $\Bc^p_{ K}(\Nc)$ are non-trivial.
\begin{example}
	It is proved in~\cite[Theorem 2 of Section 7.3]{Boggess} that, if $E=\C^2$ and $F=\R^2$, then, up to a linear change of coordinates, the following cases may occur:
	\begin{itemize}
\item[{\tiny $\bullet$}] there is a hermitian form $\phi$ on $E$ such that $\Phi(\zeta)=(\phi(\zeta),0)$ for every $\zeta\in \C^2$. In this case, $\Phi(E)$ is either $\Set{(0,0)}$, $\R_+\times \Set{0}$, $\R_-\times \Set{0}$, or $\R\times \Set{0}$ according as $\phi$ is $0$, positive, negative, or indefinite, so that $\Phi(E)^\circ$ is $\R^2$, $\R_+\times \R$, $\R_-\times \R$, or $\Set{0}\times \R$, respectively, and $\Lambda_+$ is its interior, if $\phi$ is non-degenerate, the empty set otherwise;
		
\item[{\tiny $\bullet$}] $\Phi(\zeta)=(\abs{\zeta_1}^2, \abs{\zeta_2}^2)$ for every $\zeta=(\zeta_1,\zeta_2)\in \C^2$. In this case, $\Phi(E)=\R_+^2$ and $\Lambda_+=(\R_+^*)^2$;
		
\item[{\tiny $\bullet$}] $\Phi(\zeta)=(\abs{\zeta_1}^2, \Re(\zeta_1\overline{\zeta_2}))$ for every $\zeta=(\zeta_1,\zeta_2)\in \C^2$. In this case, $\Phi(E)=\Set{0}\cup(\R_+^*\times \R)$ and $\Lambda_+=\emptyset$;
		
\item[{\tiny $\bullet$}] $\Phi(\zeta)=(\Re(\zeta_1\overline{\zeta_2}),\Im(\zeta_1\overline{\zeta_2})))$ for every $\zeta=(\zeta_1,\zeta_2)\in \C^2$. In this case, $\Phi(E)=\R^2$ and $\Lambda_+=\emptyset$.
	\end{itemize}
\end{example}

\begin{example}
	Take $r,k\in \N$, and define:
	\begin{itemize}
		\item[{\tiny $\bullet$}] $E$ as the space of $k\times r$ over $\C$;
		
		\item[{\tiny $\bullet$}] $F$ as the space of self-adjoint $r\times r$ matrices over $\C$;
		
		\item[{\tiny $\bullet$}] $\Phi(\zeta)\coloneqq \zeta\zeta^*$ for every $\zeta\in E$.
	\end{itemize}
	Then, the closed convex envelope of $\Phi(E)$ is the cone of positive self-adjoint $r\times r$ matrices over $\C$ (cf.~\cite[Example 2.14 and Corollary 2.58]{CalziPeloso}), so that $\Lambda_+$ may be identified with the cone of non-degenerate self-adjoint $r\times r$ matrices over $\C$ by means of the bilinear form $F\times F\ni (x,y)\mapsto \tr(x y)$ (cf.~\cite[Example 2.6]{CalziPeloso}). If $k\Meg r$, then $\Phi(E)$ is already convex and closed.
\end{example}

We now pass to an extension of  the Bernstein inequality and a corresponding characterization of $\Bc_K^p(\Nc)$.  Notice that in the next results we only consider derivatives in the directions in $\Nc$ that are transversal to $E$.

\begin{teo}[Bernstein Inequality]\label{Bern-ineq-thm}
Let $K$ be a \emph{symmetric} compact convex subset of $F'$, and take $p\in [1,\infty]$. 
Then, for every $f\in \Bc_K^p(\Nc)$, for every $v_1,\dots, v_k\in F$,
\[
\norm{\partial_{v_1}\cdots \partial_{v_k} f_0}_{L^p(\Nc)}\meg \bigg(
\prod_{j=1}^k H_{K}(v_k)\bigg)  \norm{f_0}_{L^p(\Nc)}. 
\]
Conversely,  take a CR $\phi\in L^p(\Nc)$ such that $\partial_v^k \phi \in L^p(\Nc)$ for every $v\in \partial B_F(0,1)$ and for every $k\in \N$, and such that 
\[
\limsup_{k\to \infty}\norm{\partial_v^k \phi}_{L^p(\Nc)}^{1/k}\meg H_{K}(v)
\]
for every $v\in \partial B_F(0,1)$.
Then, there is a unique $f\in \Bc_{K}^p(\Nc)$ such that $f_0=\phi$.
\end{teo}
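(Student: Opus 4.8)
The plan is to transfer the classical one-dimensional Bernstein inequality to the present setting via the Paley--Wiener description in Theorem~\ref{cor:1}(1), working one direction $v$ at a time. For the direct inequality, fix $f\in\Bc_K^p(\Nc)$, so $f_0\in\Oc_K(\Nc)\cap L^p(\Nc)$; in particular $f_0$ is CR and $\Fc_F[f_0(\zeta,\,\cdot\,)]$ is supported in $K$ for every $\zeta$. Fix $v\in F$ and consider the one-parameter restriction $t\mapsto f_0((\zeta,x)(0,tv))=f_0(\zeta,x+tv)$. For each fixed $(\zeta,x)$ this is the trace on a line of a function whose Fourier transform (in $x$) is supported in $K$, hence the scalar function $t\mapsto f_0(\zeta,x+tv)$ has Fourier transform supported in the compact interval $[-H_K(v),H_K(v)]$ (using that $K$ is symmetric, so $H_K(v)=H_K(-v)=|v|_{K^\circ}$, and that the directional slice of $\widehat{f_0(\zeta,\,\cdot\,)}$ lives in the projection of $K$). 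By the classical one-variable Bernstein inequality in $L^p(\R)$, $p\in[1,\infty]$, applied in the variable $t$, one gets a pointwise-in-$(\zeta,x')$ bound on a suitable orthogonal slice; integrating in the remaining variables with Minkowski's integral inequality yields $\norm{\partial_v f_0}_{L^p(\Nc)}\le H_K(v)\norm{f_0}_{L^p(\Nc)}$. Here one must check that $\partial_v f_0$ again lies in $\Oc_K(\Nc)\cap L^p(\Nc)$: it is CR because $\partial_v$ commutes with $\overline{Z_w}$ for $v\in F$ (the coefficients of $\overline{Z_w}$ involve $\partial_F$ linearly with constant coefficients in $F$ and linear coefficients in $\zeta$, so the relevant commutator with $\partial_v$, $v\in F$, vanishes), and its partial Fourier transform in $x$ is $i\la\cdot,v\ra$ times that of $f_0$, hence still supported in $K$. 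Iterating gives the full product inequality $\norm{\partial_{v_1}\cdots\partial_{v_k}f_0}_{L^p}\le\big(\prod_j H_K(v_j)\big)\norm{f_0}_{L^p}$.

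For the converse, start from a CR function $\phi\in L^p(\Nc)$ with $\partial_v^k\phi\in L^p(\Nc)$ for all unit $v\in F$ and $\limsup_k\norm{\partial_v^k\phi}_{L^p}^{1/k}\le H_K(v)$. The goal is to show $\phi\in\Oc_K(\Nc)$, after which Theorem~\ref{cor:1}(1) produces the unique holomorphic extension $f\in\Bc_K^p(\Nc)$ with $f_0=\phi$. By hypothesis $\phi$ is already CR, so the only thing to verify is that $\Fc_F[\phi(\zeta,\,\cdot\,)]$ is supported in $K$ for (a.e., then every) $\zeta$. This is exactly a \emph{real Paley--Wiener theorem} in the $F$-variable: for each unit $v$, the growth condition $\limsup_k\norm{\partial_v^k\phi}_{L^p}^{1/k}\le H_K(v)$ forces, by the converse half of the classical Bernstein characterization recalled in the Introduction (or by~\cite[Theorem 2.9]{AndersenDejeu}), that the distributional Fourier transform $\widehat\phi$ in $x$ is supported in the slab $\Set{\lambda\in F'\colon|\la\lambda,v\ra|\le H_K(v)}$. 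Intersecting over all unit $v$ gives $\supp\widehat\phi\subseteq\bigcap_v\Set{|\la\lambda,v\ra|\le H_K(v)}=\Set{\lambda\colon H_{\Set{\lambda}}\le H_K\text{ on }\partial B_F(0,1)}=K$, the last equality being the standard fact that a compact convex symmetric set is the intersection of the slabs defined by its supporting function. One then upgrades ``$\widehat\phi$ supported in $K$'' to ``$\Fc_F[\phi(\zeta,\,\cdot\,)]$ supported in $K$ for every $\zeta$'': since $\phi\in C^\infty(\Nc)$ (smoothness in $F$ from the derivative hypotheses, and in $E$ from CR-ellipticity / hypoellipticity of $\overline Z$ on Siegel CR manifolds, cf.~\cite{Calzi3}), the partial Fourier transform in $x$ is well-defined for each $\zeta$ and its support is contained in that of the full $\widehat\phi$; uniqueness of $f$ is immediate since $f_0$ determines $f$ by Theorem~\ref{cor:1}(1).

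The main obstacle I anticipate is the converse direction's support argument, specifically passing from a statement about the joint distributional Fourier transform $\widehat\phi$ on $\Nc=E\times F$ to the slicewise statement required by the definition of $\Oc_K(\Nc)$, and ensuring enough regularity of $\phi$ to make ``$\phi(\zeta,\,\cdot\,)$'' a tempered distribution for every individual $\zeta$ rather than merely almost every $\zeta$. One route is to first establish that $\phi$ is CR plus has the slab growth, deduce via the real Paley--Wiener theorem applied in $F$ (with $\zeta$ as a parameter, using that $\partial_v$ acts only in $F$) that for a.e.\ $\zeta$ the slice has the right Fourier support, and then invoke the smoothing/ellipticity from Theorem~\ref{cor:1} and~\cite{Calzi3} to promote this to every $\zeta$ and to obtain the holomorphic extension; care is needed because $H_K$ may vanish in some directions when $K$ is lower-dimensional, but that case is handled by Theorem~\ref{cor:1}(3). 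A secondary technical point is the commutation $[\partial_v,\overline{Z_w}]=0$ for $v\in F$, which should be checked directly from the explicit formula for $Z_w$ but is genuinely routine.
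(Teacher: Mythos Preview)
Your direct inequality is essentially the paper's: slice along the $v$-direction, apply the one-dimensional Bernstein inequality, integrate back. The paper phrases this on the holomorphic side (restricting the entire extension $f$ to complex lines $w\mapsto f(\zeta,x+i\Phi(\zeta)+wv)$) while you work on the real side via the $\Oc_K$ description, but the content is the same and your iteration via $\partial_v f_0\in\Oc_K\cap L^p$ is fine.

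The converse has a genuine gap. You want a real Paley--Wiener theorem to deliver the slab support, but neither reference you cite applies: the classical one-variable converse acts on $L^p(\R)$, whereas your hypothesis controls $\norm{\partial_v^k\phi}_{L^p(\Nc)}$, not slice norms $\norm{\partial_v^k\phi(\zeta,\cdot)}_{L^p(F)}$; and~\cite[Theorem 2.9]{AndersenDejeu} (as quoted in the paper) requires the full Fourier support to be bounded, which fails here in the $E'$-direction. Your smoothness claim is also unjustified: ``CR hypoellipticity of $\overline Z$'' is not something you can invoke off the shelf on Siegel CR manifolds --- the paper itself recalls from~\cite{PR} that these are precisely the quadratic CR manifolds whose Kohn Laplacian is \emph{non}-solvable on $0$-forms --- so you cannot conclude $\phi\in C^\infty(\Nc)$ (which the definition of $\Oc_K$ requires) this way.

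The paper's route is an approximation and $L^2$ reduction that your proposal is missing. One forms $\eta_j*(\phi\tau_j)\in\Sc(\Nc)$ with CR $\tau_j\in\widetilde\Sc_{\overline{\Lambda_+}\cap B_{F'}(0,1)}(\Nc)$ and mollifiers $\eta_j$; a Leibniz/Young/H\"older computation transfers the $L^p$ derivative growth on $\phi$ to $L^2$ derivative growth on these Schwartz CR approximants, with a $2^{-j}$ loss. In $L^2$ the group Plancherel formula (Proposition~\ref{prop:4}) converts the spectral-radius bound directly into the support condition $\pi_\lambda(\eta_j*(\phi\tau_j))=0$ for a.e.\ $\lambda\notin K_j\coloneqq K+\overline B_{F'}(0,2^{-j})$, placing the approximants in $\Bc^p_{K_j}(\Nc)$. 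Since $\eta_j*(\phi\tau_j)\to\phi$ in $L^p$, completeness (Corollary~\ref{cor:2}) yields $f\in\bigcap_j\Bc^p_{K_j}=\Bc^p_K$. The two moves you lack are the regularization into $\Sc(\Nc)$ (which supplies the smoothness you tried to obtain from hypoellipticity) and the passage to $L^2$ (which supplies the Fourier-support conclusion via Plancherel rather than via a real Paley--Wiener theorem whose hypotheses are not met).
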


We will also prove a weaker result in the case of derivatives in the
complex tangential directions of $\Nc$, see Proposition~\ref{comp-tang-dir:Prop}. 
We shall now describe a `real Paley--Wiener theorem', which extends~\cite[Theorem 2.9]{AndersenDejeu}. 

\begin{deff}
Take $p\in (0,\infty]$. We say that $f\in L^p(\Nc)$ has  bounded  spectrum if $f\in \Oc_K(\Nc)$ for some compact subset $K$ of $F'$. We shall then define
\[
\Sp(f)\coloneqq \bigcap\Set{K\colon \text{$K$ is a compact subset of $F$, } f\in \Oc_K(\Nc)}=\overline{\bigcup_{\zeta\in E} \supp(\Fc_F[f(\zeta,\,\cdot\,)])}
\]
\end{deff}

The following result is based on~\cite{Bang,AndersenDejeu}, and may be considered as a real Paley--Wiener theorem.

\begin{prop}\label{prop:9}
Take $p\in (0,\infty]$, $f\in L^p(\Nc)$ with bounded spectrum, and a polynomial $P$ on $F'$. Then,
\[
\lim_{k\to \infty} \norm{P(-i \partial_F)^k f}_{L^p(\Nc)}^{1/k}= \max_{\Sp(f)} \abs{P}.
\]
\end{prop}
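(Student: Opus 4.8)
The plan is to adapt the one-variable argument of Bang and Andersen--de Jeu to the group $\Nc$, exploiting that the constraint $f\in\Oc_K(\Nc)$ means, after taking the commutative Fourier transform in the central variable $x\in F$, that for each $\zeta\in E$ the function $\Fc_F[f(\zeta,\,\cdot\,)]$ is a distribution supported in the fixed compact set $K$, and that $P(-i\partial_F)$ acts fiberwise as multiplication by the polynomial $P(\lambda)$ on the Fourier side. First I would establish the easy inequality $\limsup_k\norm{P(-i\partial_F)^kf}_{L^p}^{1/k}\meg \max_{\Sp(f)}\abs P$. For $p\in[1,\infty]$ this is immediate once one knows that $f\in\Oc_K(\Nc)\cap L^p(\Nc)$ corresponds under $f\mapsto f_0$ (Theorem~\ref{cor:1}(1)) to an element of $\Bc^p_K(\Nc)$, and that $\Bc^p_K$ is stable under the operators $\partial_v$ with a norm bound governed by $H_K$ — but to get the sharp constant $\max_{\Sp(f)}\abs P$ rather than a crude bound, I would instead argue directly: write $P(-i\partial_F)^kf$, estimate its $L^p$ norm by passing to $\Sp(f)$ via an approximate-identity/Fourier-multiplier argument on $F$ (the multiplier $P(\lambda)^k$ restricted to a neighbourhood of $\Sp(f)$ has $L^1$-Fourier-transform norm growing sub-exponentially in $k$ by a Paley--Wiener/Bernstein estimate), so that $\norm{P(-i\partial_F)^kf}_{L^p}\meg C_\eps(\max_{\Sp(f)}\abs P+\eps)^k\norm{f}_{L^p}$ for every $\eps>0$. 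For $p<1$ one must instead use the quasi-norm triangle inequality together with the embedding $\Bc^p_K\subseteq\Bc^\infty_K$ (which follows from the Plancherel--P\'olya inequality, Theorem~\ref{prop:6}, exactly as in the classical case) to reduce to the $p=\infty$ estimate.

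Next, the reverse inequality $\liminf_k\norm{P(-i\partial_F)^kf}_{L^p}^{1/k}\Meg \max_{\Sp(f)}\abs P$. Let $M\coloneqq\max_{\Sp(f)}\abs P$ and pick $\lambda_0\in\Sp(f)$ with $\abs{P(\lambda_0)}=M$. By definition of $\Sp(f)$ there is some $\zeta_0\in E$ with $\lambda_0\in\supp(\Fc_F[f(\zeta_0,\,\cdot\,)])$. Fix a neighbourhood $U$ of $\lambda_0$ on which $\abs P\Meg M-\eps$; choose a test function $\psi$ on $F$ with $\widehat\psi\in C^\infty_c(U)$, $\widehat\psi=1$ near $\lambda_0$, and such that $\psi*_F f(\zeta_0,\,\cdot\,)\not\equiv 0$ (possible since $\lambda_0\in\supp$). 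Then $P(-i\partial_F)^k(\psi*_F f)=((P^k\widehat\psi)^\vee)*_F f$, and since $P^k\widehat\psi$ is supported in $U$ we can factor $P^k\widehat\psi=M^k\,g_k$ with $g_k$ bounded in a sense that makes $\norm{(P^k\widehat\psi)^\vee}$ grow like $(M-\eps)^k$ from below against the fixed nonzero profile $\psi*_Ff(\zeta_0,\,\cdot\,)$. Comparing $\norm{P(-i\partial_F)^k(\psi*_Ff)}_{L^p}$ with $\norm{P(-i\partial_F)^kf}_{L^p}$ (using that convolution by $\psi$ on $F$ is bounded on $L^p(\Nc)$, $p\Meg1$, with the analogous $p<1$ modification) yields the lower bound with $M-\eps$ in place of $M$; letting $\eps\to0$ finishes it. Here the restriction of everything to a single fiber $\zeta=\zeta_0$ is harmless because $P(-i\partial_F)$ commutes with restriction in $\zeta$, and the $\zeta_0$-fiber of a function in $L^p(\Nc)$ lies in $L^p(F)$ for almost every $\zeta_0$, so one may as well choose $\zeta_0$ in this full-measure set.

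The main obstacle is the bookkeeping needed to make the lower-bound step rigorous: one must ensure that the fiberwise profile $\psi*_Ff(\zeta_0,\,\cdot\,)$ can genuinely be taken nonzero \emph{for a $\zeta_0$ in the full-measure set where the fiber is $L^p$}, and that the sub-exponential factors (the $L^1$ norms of inverse Fourier transforms of $P^k\widehat\psi$, which grow like $C(1+k)^{N}\operatorname{diam}(U)^{\dots}$) really are sub-exponential so they disappear after taking $k$-th roots — this is the place where the classical Bernstein-type estimate $\norm{Q}_{L^\infty}\le\norm{\widehat Q}\le c^{\deg}\norm Q_{L^\infty(\operatorname{supp}\widehat Q)}$ enters, and it must be invoked uniformly in $k$ for the polynomial-times-cutoff multipliers. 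For $p<1$ the absence of a genuine triangle inequality forces an extra reduction (via $\Bc^p_K\hookrightarrow\Bc^\infty_K$ and interpolation-type quasi-norm manipulations) which I would handle exactly as Bang and Andersen--de Jeu do in the Euclidean setting, since none of those arguments sees the group structure once one is working fiberwise on $F$.
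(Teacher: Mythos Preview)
Your outline coincides with the paper's proof in structure: the lower bound is obtained by passing to $p=\infty$ via the embedding of Theorem~\ref{prop:7} (each $P(-i\partial_F)^k f$ has the same bounded spectrum as $f$, so the constant in the embedding is independent of $k$) and then applying the Euclidean lower bound of Andersen--de~Jeu fiberwise, exactly as you suggest at the end. Your more elaborate localisation with $\psi*_F f$ is not needed once this reduction is made, but it is not wrong either.

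The gap is in your upper bound for $p<1$. The embedding $\Bc^p_K\hookrightarrow\Bc^\infty_K$ reads $\norm{g}_{L^\infty}\meg C\norm{g}_{L^p}$ and therefore cannot bound $\norm{P(-i\partial_F)^k f}_{L^p}$ from above; and your $L^1$-kernel argument (Young) simply fails for $p<1$. The paper does not reduce to $p=\infty$ here. Instead it fixes $N>m(1/p-1/2)$ and a cutoff $\tau$ with $\chi_{\Sp(f)}\meg\tau\meg\chi_{\Sp(f)+B_{F'}(0,\eps)}$, estimates $\norm{P^k\tau}_{W^{N,2}(F')}$ by Leibniz and Fa\`a di Bruno (getting a fixed polynomial in $k$ times $\big(\max_{\Sp(f)+\overline B_{F'}(0,\eps)}\abs{P}\big)^k$), and then invokes a Fourier multiplier theorem valid for band-limited $L^p$ with $0<p\meg\infty$ (Triebel, \emph{Theory of Function Spaces}, Theorem~1.5.2) fiberwise in $\zeta$. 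This is precisely the ``whatever Bang and Andersen--de~Jeu do'' that you defer to in your last paragraph; the point to correct is that the mechanism is a Sobolev-norm multiplier bound, not the embedding into $L^\infty$.
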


Here, $P(-i \partial_F) $ is convolution by $\Fc_F^{-1}(P)$, interpreted as a distribution on $\Nc$. 
Notice that, as observed in~\cite[Lemma 2.8]{AndersenDejeu}, 
\[
\Sp(f)= \Set{\lambda\in F'\colon \forall P \:\: \abs{P(\lambda)}\meg \max_{\Sp(f)}\abs{P}},
\]
where $P$ runs through the set of polynomials on $F'$. Thus, the preceding result fully characterizes $\Sp(f)$ if $f$ has bounded spectrum.

Next we turn to the question of describing sampling sequences for $\Bc_K^p(\Nc)$.  
We endow $\Nc$ with a left-invariant distance which is homogeneous of degree $1$ with respect to the dilations given by $t\cdot (\zeta,x)=(t\zeta, t^2 x)$.\footnote{Distances of this kind always exist and may be defined, e.g., as suitable Carnot--Carathéodory distances. One may also assume that they are of class $C^\infty$ on the complement of the diagonal, cf.~\cite{HebischSikora}. }
We recall that, given a metric space $(X,d)$, $\delta,R>0$, a
sequence  $(x_k)$ is a $(\delta,R)$-{\em lattice} if the balls $B(x_k,\delta)$ are disjoint, while the balls $B(x_k,R\delta)$ form a cover of $X$.  

\begin{teo}\label{prop:10}
Take $p\in(0,\infty]$, $R_0>0$, $\delta_+>0$, and a  compact convex subset $K$ of $F'$.  Then, there are two constants $C,\delta_->0$ such that for every $(\delta,R)$-lattice $(\zeta_j,x_j)_{j\in J}$ on $\Nc$, with $\delta>0$ and $R\in (1,R_0]$, one has 
\[
\frac1 C \norm{f}_{L^p(\Nc)}\meg \delta^{Q/p}\norm*{\max_{\overline B((\zeta_j,x_j), R\delta)} \abs{f}  }_{\ell^p(J)}\meg  C \norm{f}_{L^p(\Nc)}
\]
for every $f\in \Oc_K(\Nc)$, if $\delta\meg \delta_+$, and 
\[
\frac1 C \norm{f}_{L^p(\Nc)}\meg \delta^{Q/p}\norm*{\min_{\overline B((\zeta_j,x_j), R\delta)} \abs{f}  }_{\ell^p(J)}\meg  C \norm{f}_{L^p(\Nc)}
\]
for every $f\in \Oc_K(\Nc)$, if $\delta\meg \delta_-$.
\end{teo}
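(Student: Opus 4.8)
The plan is to isolate one weighted pointwise estimate and then deduce both the frame‑type (``$\max$'') and the sampling‑type (``$\min$'') inequalities from it by routine covering arguments. First I would reduce. By Theorem~\ref{cor:1}(1) every $f\in\Oc_K(\Nc)\cap L^p(\Nc)$ is the restriction $g_0$ of a (unique) entire $g$ on $E\times F_\C$ satisfying the bounds of Definition~\ref{exp-type-Kirc-deff}, with $g=f$ on $\cM\cong\Nc$. Since $\Oc_K(\Nc)$, $L^p(\Nc)$, the distance and the Haar measure are all left‑$\Nc$‑invariant, and $\Nc$ acts holomorphically and affinely on $E\times F_\C$, each summand satisfies $\max_{\ov B((\zeta_j,x_j),R\delta)}\abs{f}=\max_{\ov B(e,R\delta)}\abs{L_{(\zeta_j,x_j)^{-1}}f}$, so it suffices to control functions localised near $e$. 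Since $\Oc_K\subseteq\Oc_{K'}$ for $K\subseteq K'$, I may also replace $K$ by the ball $\ov B_{F'}\!\bigl(0,\sup_K\abs{\,\cdot\,}\bigr)$, which is in particular symmetric. Finally, shrinking $\delta_+$ I arrange $C_0R_0\delta_+\le1$, where by homogeneity of the distance and the quadratic graphing $\Im z=\Phi(\zeta)$ of $\cM$, the image in $E\times F_\C$ of a ball $\ov B((\zeta_0,x_0),r)$ with $r\le1$ lies in a Euclidean ball of radius $\le C_0 r$.

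The core is: \emph{there are a bounded $W\ge0$ on $\Nc$ with $\supp W\subseteq\{\abs{\zeta}\le C\}\times F$ and $x\mapsto W(\zeta,x)$ rapidly decreasing uniformly in $\zeta$, and a constant $C$, so that $\sup_{\abs{w}\le2}\abs{g(w)}\le C\bigl(\int_\Nc\abs{f}^pW\,\dd\mu\bigr)^{1/p}$ for every $f\in\Oc_K(\Nc)\cap L^p(\Nc)$ with entire extension $g$} (with $\norm{f}_{L^\infty}$ replacing the integral when $p=\infty$). To prove it I would use that $\log\abs{g}$ is plurisubharmonic, so $\abs{g}^p$ obeys the sub‑mean‑value inequality over Euclidean balls for every $p>0$; integrating this against a weight compactly supported near $0$ bounds $\abs{g(w_0)}^p$, for $w_0$ in a fixed compact set, by an integral of $\abs{f_h}^p$ over a bounded region of the slices $\cM+ih$ ($\abs{h}$ bounded), after writing $z=x+i\Phi(\zeta)+ih$ in the $z$‑integral. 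For fixed $\zeta$, $f_h(\zeta,\,\cdot\,)$ is the imaginary translate by $ih$ of $f_0(\zeta,\,\cdot\,)$, whose Fourier transform lies in $K$; hence $f_h(\zeta,\,\cdot\,)=f_0(\zeta,\,\cdot\,)*\check\psi_h$ with $\psi_h$ depending continuously on the bounded $h$, and the classical Plancherel--P\'olya inequality for band‑limited functions on $\R^m$ (with a standard discretisation when $p<1$) gives $\abs{f_h(\zeta,x)}^p\le C\int_F\abs{f_0(\zeta,y)}^p\Omega(x-y)\,\dd y$ with $\Omega$ rapidly decreasing uniformly in $h$. Integrating in $(\zeta,x,h)$ over the bounded region produces the claimed bound; for $p=\infty$ one invokes instead the Plancherel--P\'olya inequality (Theorem~\ref{prop:6}) in the form $\norm{f_h}_{L^\infty}\le\ee^{H_K(h)}\norm{f_0}_{L^\infty}$. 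Cauchy's estimates then also give $\sup_{\abs{w}\le1}\abs{\nabla g}\le C\bigl(\int_\Nc\abs{f}^pW\bigr)^{1/p}$.

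Granting this, the $\max$‑version is quick. Write $\ov B_j=\ov B((\zeta_j,x_j),R\delta)$. For the left inequality, choose pairwise disjoint tiles $T_j\subseteq\ov B_j$ covering $\Nc$ (e.g.\ $T_j=\ov B_j\setminus\bigcup_{k<j}\ov B_k$); then $\norm{f}_{L^p}^p=\sum_j\int_{T_j}\abs{f}^p\le c_0R_0^Q\,\delta^Q\sum_j\max_{\ov B_j}\abs{f}^p$, valid for all $\delta,R$ (and trivial for $p=\infty$). For the right inequality, left‑translate by $(\zeta_j,x_j)^{-1}$ and use $C_0R_0\delta_+\le1$ so that the image of $\ov B_j$ in $E\times F_\C$ lies in $\{\abs{w}\le1\}$; the core estimate gives $\max_{\ov B_j}\abs{f}^p\le C\int_\Nc\abs{f(g')}^pW\bigl((\zeta_j,x_j)^{-1}g'\bigr)\,\dd\mu(g')$. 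Summing, $\delta^Q\sum_j\max_{\ov B_j}\abs{f}^p\le C\,\delta^Q\int_\Nc\abs{f}^p\bigl(\sum_jW((\zeta_j,x_j)^{-1}g')\bigr)\dd\mu(g')$; translating by $g'^{-1}$ turns the inner sum into $\sum_j\widetilde W(h_j)$, where $h_j=g'^{-1}(\zeta_j,x_j)$ is again a $(\delta,R)$‑lattice (left‑invariance of the distance) and $\widetilde W(g):=W(g^{-1})$ has the same size properties. Since a $\delta$‑separated set meets $\{\abs{\zeta}\le C\}\times\{t\le\abs{x}<t+1\}$ in $O(\delta^{-Q}t^{m-1})$ points while $\widetilde W$ decays rapidly in $x$, this sum is $O(\delta^{-Q})$ uniformly in $g'$, and the right inequality follows for all $\delta\le\delta_+$, $R\in(1,R_0]$.

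For the $\min$‑version the upper bound is immediate from $\min\le\max$. For the lower bound, use $\max_{\ov B_j}\abs{f}\le\min_{\ov B_j}\abs{f}+\operatorname{osc}_{\ov B_j}\abs{f}$ and estimate the oscillation: with $g$ the extension of $L_{(\zeta_j,x_j)^{-1}}f$, any $a\in\ov B_j$ corresponds to $w_a\in\cM$ with $\abs{w_a}\le C_0R\delta\le1$, so $\bigl|\abs{f(a)}-\abs{f(\zeta_j,x_j)}\bigr|\le\abs{g(w_a)-g(0)}\le C_0R\delta\sup_{\abs{w}\le1}\abs{\nabla g}\le CR\delta\bigl(\int_\Nc\abs{f}^pW((\zeta_j,x_j)^{-1}\,\cdot\,)\bigr)^{1/p}$ (with the obvious change when $p=\infty$), hence $\operatorname{osc}_{\ov B_j}\abs{f}\le CR\delta\bigl(\int_\Nc\abs{f}^pW((\zeta_j,x_j)^{-1}\,\cdot\,)\bigr)^{1/p}$. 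Summing exactly as above, $\delta^Q\sum_j\operatorname{osc}_{\ov B_j}\abs{f}^p\le CR_0^p\,\delta^p\norm{f}_{L^p}^p$, and combining with the unconditional left inequality of the $\max$‑version ($p$‑subadditivity of $\norm{\,\cdot\,}_{\ell^p}$ when $p<1$) yields $\delta^{Q/p}\norm{\min_{\ov B_j}\abs{f}}_{\ell^p}\ge(C^{-1}-CR_0\delta)\norm{f}_{L^p}$; picking $\delta_-\le\delta_+$ with $CR_0\delta_-\le\tfrac1{2C}$ finishes the proof. The hard part is the core estimate, and the delicate point there is that elements of $\widetilde\Ec_K(E\times F_\C,\Nc)$ may grow like $\ee^{C\abs{\zeta}^2}$ in the $E$‑directions, so one cannot afford a weight larger than compactly supported in $\zeta$: all of the decay of $W$ must be extracted from the band‑limitedness of $f$ in the $F$‑directions, and one must still check that such an anisotropic $W$ sums to $O(\delta^{-Q})$ over a $(\delta,R)$‑lattice given the step‑two homogeneous structure of $\Nc$.
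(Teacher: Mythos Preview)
Your argument is essentially correct and takes a genuinely different route from the paper's. The paper's proof is extremely short: it first shows that every $f\in\Oc_K(\Nc)$ satisfies a reproducing identity $f=f*\tau$ for a fixed $\tau\in\Sc(\Nc)$ (this comes from~\cite{Calzi3}), and then invokes~\cite[Proposition~9]{Calzi2}, a general sampling result on homogeneous groups for functions admitting such a Schwartz reproducing kernel; a brief approximation $f\psi_j\to f$ extends the conclusion to $f\in\Oc_K(\Nc)$ not known a priori to lie in $L^p$. In other words, the paper packages both the pointwise ``core estimate'' and the covering/summation step into a single black-box citation.

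Your approach unpacks this by hand. Instead of the global identity $f=f*\tau$ on $\Nc$, you combine the sub-mean-value inequality for $\abs{g}^p$ on $E\times F_\C$ (plurisubharmonicity) with the slice-wise band-limitedness $\supp\Fc_F[f(\zeta,\cdot)]\subseteq K$, which yields the pointwise control $\abs{f_h(\zeta,x)}^p\le C\int_F\abs{f_0(\zeta,y)}^p\Omega(x-y)\,\dd y$ for bounded $h$. This produces an explicit weight $W$ that is compactly supported in $\zeta$ and rapidly decreasing in $x$, after which the $\ell^p$ bounds follow from the lattice-counting argument you describe; your oscillation estimate for the $\min$-inequality is the standard route. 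What the paper's approach buys is brevity and modularity (the reproducing identity $f=f*\tau$ is the natural structural input, and the sampling lemma applies on any homogeneous group); what your approach buys is self-containment and an explicit description of the weight, at the cost of the somewhat delicate $p<1$ step (your ``standard discretisation'' is indeed standard---it is the Shannon-sampling/Peetre-maximal argument found e.g.\ in Triebel---but it is the one place where a reader might want more detail).
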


Notice that the preceding result allows to characterize the elements of $\Oc_K(\Nc)$ which belong to $L^p(\Nc)$ on the sole knowledge of their samples on suitable countable sets.

\medskip

The paper is organized as follows.   In Section~\ref{sec:2}, we
describe some basic facts on the Fourier transform of CR function on
$\Nc$ and on Paley--Wiener--Schwartz theorems associated with $\Nc$. 
In Section~\ref{P-L:sec}, we prove Theorem~\ref{prop:6} and some of
its consequences, such as the completeness of the spaces $\Bc^p_K$
(cf.~Corollary~\ref{cor:2}), the inclusions $\Bc^p_K\subseteq \Bc^q_K$
for $p\meg q$ (cf.~Theorem~\ref{prop:7}), and a characterization of
the $K$ for which $\Bc^p_K$ is not trivial
(cf.~Proposition~\ref{prop:3}). 
In Section~\ref{sec:4}, we prove Theorem~\ref{Bern-ineq-thm} and
Proposition~\ref{prop:9}, as well as a version of Bernstein
inequalities for derivatives of the form $Z_v$, $v\in E$
(cf.~Proposition~\ref{comp-tang-dir:Prop}). 
In Section~\ref{sec:5}, we shall determine the reproducing kernel of
$\Bc^2_K$ (cf.~Proposition~\ref{prop:12}) and study the boundedness of
the corresponding projector on $L^p(\Nc)$. In particular, as a
consequence of Fefferman's solution of the multiplier problem for the
ball~\cite{Fefferman}, we shall prove that, if $K$ is contained in
$\Lambda_+$ and is sufficiently `far' from being polyhedral, then $P$
is not bounded on $L^p$ for any $p\neq 2$
(cf.~Corollary~\ref{cor:3}). 
In Section~\ref{sec:6} we shall prove Theorem~\ref{prop:10} and show
how lattices on $\Nc$ may be constructed from lattices on $E$ and $F$
(cf.~Lemma~\ref{lem:2}).

\section{Analysis on Siegel CR Manifolds}\label{sec:2}

\subsection{Fourier Analysis on $\Nc$}

For a more extensive description of Fourier analysis on $\Nc$, we refer the reader to~\cite{Calzi3}. Here we shall limit ourselves to a description of the Plancherel formula for CR elements of $L^2(\Nc)$.

Observe that, for every $\lambda\in \Lambda_+$, the quotient $\Nc/\ker \lambda$ is isomorphic to a Heisenberg group (to $\R$ if $E=\Set{0}$), so that the Stone--von Neumann theorem (cf., e.g.,~\cite[Theorem 1.50]{Folland}) implies that there is (up to unitary equivalence)  a unique irreducible continuous unitary representation $\pi_\lambda$ of $\Nc$ in some hilbertian space $\sH_\lambda$ such that $\pi_\lambda(0,x)=\ee^{-i \langle \lambda, x \rangle}$ for every $x\in F$.
One may then choose $\sH_\lambda=\Hol(E)\cap L^2(\nu_\lambda)$, where $\dd \nu_\lambda(\omega)= \ee^{-2 \langle \lambda, \Phi(\omega)\rangle}\dd \omega$, and
\[
\pi_\lambda(\zeta,x) \psi(\omega)= \ee^{ \langle \lambda_\C, -i x+2 \Phi(\omega,\zeta)-\Phi(\zeta)\rangle} \psi(\omega-\zeta)
\] 
for every $(\zeta,x)\in \Nc$, for every $\omega\in E$, and for every $\psi\in \sH_\lambda$. 
In addition, for every $\lambda\in \Lambda_+$, we define $\abs{\Pfaff(\lambda)}$ as the determinant of the \emph{positive} hermitian form $\langle \lambda,\Phi\rangle$ with respect to the scalar product of $E$. Observe that $\abs{\Pfaff}$ is  positive on $\Lambda_+$ and extends to a polynomial on $F'$.

The following result summarizes~\cite[Propositions 2.3, 2.4, 2.5, and 2.6]{Calzi3}.
We denote by $\Lin(\sH_\lambda)$ the space of endormophisms of $\sH_\lambda$, and by $\Lin^2(\sH_\lambda)$ the space of Hilbert--Schmidt endomorphisms of $\sH_\lambda$.

\begin{prop}\label{prop:4}
The following hold:
\begin{enumerate}
\item[\textnormal{(1)}] $\pi_\lambda$ is an irreducible continuous unitary representation of $\Nc$ in $\sH_\lambda$ for every $\lambda\in \Lambda_+$;

\item[\textnormal{(2)}] for every $\lambda\in \Lambda_+$, $e_{\lambda,0}\coloneqq \sqrt{\frac{2^n\abs{\Pfaff(\lambda)}}{\pi^n}}$ is a unit vector in $\sH_\lambda$, and
\[
\langle \pi_\lambda(\zeta,x)e_{\lambda,0}\vert e_{\lambda,0}\rangle=\ee^{-\langle \lambda_\C , i x+\Phi(\zeta)\rangle}
\]  
for every $(\zeta,x)\in \Nc$;

\item[\textnormal{(3)}] for every $\lambda\in \Lambda_+$ and for every $v\in E$, 
\[
\dd \pi_\lambda(Z_v)=-\partial_v \qquad \text{and} \qquad \dd \pi_\lambda(\overline{Z_v})=2 \langle \lambda, \Phi(\,\cdot\,,v)\rangle;
\] 

\item[\textnormal{(4)}] the mapping 
\[
L^1(\Nc)\ni f \mapsto (\pi_\lambda(f))\in \prod_{\lambda\in \Lambda_+} \Lin(\sH_\lambda)
\]
induces an isomorphism
\[
L^2(\Nc) \to\frac{2^{n-m}}{\pi^{n+m}} \int_{\Lambda_+}^\oplus \Lin^2(\sH_\lambda) P_{\lambda,0} \abs{\Pfaff(\lambda)}\,\dd \lambda,
\]
where $P_{\lambda,0}=\langle \,\cdot\,\vert e_{\lambda,0}\rangle e_{\lambda,0}$ for every $\lambda\in\Lambda_+$.
\end{enumerate}
\end{prop}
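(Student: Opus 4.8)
The plan is to obtain assertions (1)--(3) by direct computation in the explicit model $\sH_\lambda=\Hol(E)\cap L^2(\nu_\lambda)$, and to deduce (4) from the Plancherel theorem for the $2$-step nilpotent Lie group $\Nc$ together with the constraints that the CR condition imposes on the operators $\pi_\lambda(f)$. For (1), I would first check that $\pi_\lambda$ is a homomorphism into the unitary group of $\sH_\lambda$: the cocycle identity reduces, after the obvious cancellations, to the hermitian symmetry $\Phi(\zeta',\zeta)=\overline{\Phi(\zeta,\zeta')}$ and to the expansion $\Phi(\omega-\zeta',\zeta)=\Phi(\omega,\zeta)-\Phi(\zeta',\zeta)$; unitarity follows from the substitution $\omega\mapsto\omega-\zeta$ together with $\Phi(\omega-\zeta)=\Phi(\omega)-2\Re\Phi(\omega,\zeta)+\Phi(\zeta)$, which is exactly what makes the weight $\ee^{-2\langle\lambda,\Phi(\omega)\rangle}$ transform correctly (one also checks that $\pi_\lambda(\zeta,x)\psi$ is again holomorphic, since $\ee^{2\langle\lambda_\C,\Phi(\omega,\zeta)\rangle}$ is entire in $\omega$), and strong continuity is routine. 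Since $\{0\}\times F$ is central in $\Nc$ and $\langle\lambda,\Phi\rangle$ is a nondegenerate (indeed positive) hermitian form for $\lambda\in\Lambda_+$, the quotient $\Nc/\ker\lambda$ is a Heisenberg group and $\pi_\lambda$ descends to a representation of it with nontrivial central character; by the Stone--von Neumann theorem this representation is the unique irreducible one with that character, so $\pi_\lambda$ is irreducible and the displayed formula is just its Bargmann--Fock realisation.

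For (2), the normalisation $\norm{e_{\lambda,0}}_{\nu_\lambda}=1$ amounts to the Gaussian integral $\int_E\ee^{-2\langle\lambda,\Phi(\omega)\rangle}\,\dd\omega=\pi^n/(2^n\abs{\Pfaff(\lambda)})$, obtained by diagonalising $\langle\lambda,\Phi\rangle$, and the matrix coefficient is the same computation applied to $\langle\pi_\lambda(\zeta,x)e_{\lambda,0}\vert e_{\lambda,0}\rangle=\abs{e_{\lambda,0}}^2\,\ee^{-\langle\lambda_\C,\,ix+\Phi(\zeta)\rangle}\int_E\ee^{2\langle\lambda_\C,\Phi(\omega,\zeta)\rangle-2\langle\lambda,\Phi(\omega)\rangle}\,\dd\omega$: the key point is that the shift $\ee^{2\langle\lambda_\C,\Phi(\omega,\zeta)\rangle}$ is holomorphic in $\omega$, so completing the square produces no extra factor and the integral again equals $\pi^n/(2^n\abs{\Pfaff(\lambda)})$, cancelling $\abs{e_{\lambda,0}}^2$. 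For (3), I would differentiate the one-parameter subgroup $t\mapsto\pi_\lambda(tv,0)$ at $t=0$ (it is a subgroup because $\Phi(v,v)\in F$), obtaining the action of $(v,0)$ as $2\langle\lambda_\C,\Phi(\,\cdot\,,v)\rangle-\partial_v$, and then form the complexified combinations $Z_v=\tfrac12\big((v,0)-i(iv,0)\big)$ and $\overline{Z_v}=\tfrac12\big((v,0)+i(iv,0)\big)$, simplifying with $\Phi(\,\cdot\,,iv)=-i\Phi(\,\cdot\,,v)$ and $\partial_{iv}=i\partial_v$ on $\Hol(E)$; the multiplication terms cancel in one combination and the derivations in the other, yielding $\dd\pi_\lambda(Z_v)=-\partial_v$ and $\dd\pi_\lambda(\overline{Z_v})=2\langle\lambda,\Phi(\,\cdot\,,v)\rangle$.

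For (4), I would start from the Plancherel theorem for the $2$-step nilpotent group $\Nc$ (the orbit method), which expresses $\norm{f}_{L^2(\Nc)}^2$ as an integral of $\norm{\pi(f)}_{\Lin^2}^2$ over the generic part of the unitary dual of $\Nc$, parametrised by those $\lambda\in F'$ for which $\langle\lambda,\Phi\rangle$ is nondegenerate, the representation attached to $\lambda$ being realised on a Fock space whose holomorphic/antiholomorphic splitting follows the signature of $\langle\lambda,\Phi\rangle$. Using $\pi(Xf)=-\pi(f)\,\dd\pi(X)$ for left-invariant $X$, the CR equations $\overline{Z_v}f=0$ give $\pi_\lambda(f)\,\dd\pi_\lambda(\overline{Z_v})=0$ for every $v\in E$; by the analogue of (3) for arbitrary signature, $\dd\pi_\lambda(\overline{Z_v})$ is multiplication by a nonzero linear form for all $v$ when $\lambda\in\Lambda_+$, whereas for any other nondegenerate $\lambda$ there is a $v$ for which $\dd\pi_\lambda(\overline{Z_v})$ is a lowering derivation with dense range. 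Hence $\pi_\lambda(f)=0$ unless $\lambda\in\Lambda_+$, while for $\lambda\in\Lambda_+$ the operator $\pi_\lambda(f)$ annihilates the closed span of the functions $\langle\lambda,\Phi(\,\cdot\,,v)\rangle\psi$, which is $(\C\, e_{\lambda,0})^\perp$; that is, $\pi_\lambda(f)=\pi_\lambda(f)P_{\lambda,0}$. (For the converse inclusion one uses that $P_{\lambda,0}$ absorbs these multiplication operators, so Plancherel inversion produces a CR function.)

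Consequently the Plancherel integral collapses to one over $\Lambda_+$ with values in $\Lin^2(\sH_\lambda)P_{\lambda,0}$, and it only remains to identify the density: testing the formula on one explicit function whose group Fourier transform can be computed by hand---for instance a Gaussian in $\zeta$ tensored with an arbitrary function of $x$, via (2)---pins down the Plancherel density as $\tfrac{2^{n-m}}{\pi^{n+m}}\abs{\Pfaff(\lambda)}\,\dd\lambda$. I expect the main obstacle to be precisely this last bookkeeping: setting up the generic dual of $\Nc$ with consistent signature and measure conventions and extracting the exact constant $\tfrac{2^{n-m}}{\pi^{n+m}}$ (and the exact normalisation of $P_{\lambda,0}$), rather than any conceptual difficulty. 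In any case, the whole of Proposition~\ref{prop:4} is precisely \cite[Propositions 2.3, 2.4, 2.5, and 2.6]{Calzi3}.
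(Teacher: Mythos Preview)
Your proposal is correct, and in fact it does considerably more than the paper itself: the paper gives no proof of this proposition at all, but simply records that it summarizes \cite[Propositions 2.3, 2.4, 2.5, and 2.6]{Calzi3}, exactly the citation with which you conclude. Your sketch of (1)--(3) by direct computation in the Bargmann--Fock model, and of (4) via the nilpotent Plancherel theorem together with the observation that the CR equations $\overline{Z_v}f=0$ force $\pi_\lambda(f)\,\dd\pi_\lambda(\overline{Z_v})=0$ (hence $\pi_\lambda(f)=\pi_\lambda(f)P_{\lambda,0}$ on $\Lambda_+$ and $\pi_\lambda(f)=0$ elsewhere), is a faithful outline of how the cited propositions are actually proved in \cite{Calzi3}; so there is no divergence to report.
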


\begin{deff}
For every $\lambda\in \Lambda_+$, we define $e_{\lambda,0}$ and $P_{\lambda,0}$ as in Proposition~\ref{prop:4}.
\end{deff}

The following is a consequence of~\cite[Proposition 2.7]{Calzi3}.

\begin{prop}\label{prop:5}
Take a CR element $f$ of $L^2(\Nc)$ and a closed subset $K$ of $\overline{\Lambda_+}$. Then, the following conditions are equivalent:
\begin{enumerate}
\item[\textnormal{(1)}] $\pi_\lambda(f)=0$ for almost every $\lambda\in \Lambda_+\setminus K$;

\item[\textnormal{(2)}] $\Fc_F[f(\zeta,\,\cdot\,)]$ is supported in $K$ for almost every $\zeta\in E$.
\end{enumerate}
\end{prop}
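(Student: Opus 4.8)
The plan is to reduce everything to the fiberwise picture provided by the Plancherel theorem, Proposition~\ref{prop:4}, together with the structure of $\pi_\lambda(f)$ for CR $f$. First I would recall that, by Proposition~\ref{prop:4} and the analysis of CR functions in~\cite{Calzi3}, a CR element $f$ of $L^2(\Nc)$ satisfies $\pi_\lambda(f)=0$ for almost every $\lambda$ lying (in the Plancherel measure) outside $\overline{\Lambda_+}$, and $\pi_\lambda(f)=\pi_\lambda(f)P_{\lambda,0}$ for almost every $\lambda\in\Lambda_+$; hence $\pi_\lambda(f)$ is the rank-$\leq 1$ operator $\psi\mapsto \langle\psi\,\vert\,e_{\lambda,0}\rangle\, v_\lambda$ with $v_\lambda\coloneqq\pi_\lambda(f)e_{\lambda,0}\in\sH_\lambda$, so that $\pi_\lambda(f)=0$ if and only if $v_\lambda=0$. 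In particular condition (1) is equivalent to: $v_\lambda=0$ for almost every $\lambda\in\Lambda_+\setminus K$.

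Next I would link $v_\lambda$ to $\Fc_F[f(\zeta,\,\cdot\,)]$ through the coherent states $\pi_\lambda(\zeta_0,0)e_{\lambda,0}$, $\zeta_0\in E$. Integrating the matrix-coefficient identity of Proposition~\ref{prop:4}(2) against $f$, using $\pi_\lambda(\zeta,x)=\ee^{-i\langle\lambda,x\rangle}\pi_\lambda(\zeta,0)$ and the group law of $\Nc$, one obtains for almost every $\lambda\in\Lambda_+$ and every $\zeta_0\in E$ an identity of the form
\[
\langle v_\lambda\,\vert\,\pi_\lambda(\zeta_0,0)e_{\lambda,0}\rangle=\ee^{-\langle\lambda,\Phi(\zeta_0)\rangle}\int_E \ee^{2\langle\lambda_\C,\Phi(\zeta_0,\zeta)\rangle-\langle\lambda,\Phi(\zeta)\rangle}\,\Fc_F[f(\zeta,\,\cdot\,)](\lambda)\,\dd\zeta ,
\]
the integral being absolutely convergent: indeed $\langle\lambda,\Phi\rangle$ is a positive hermitian form for $\lambda\in\Lambda_+$ (Remark~\ref{CR-remark}), so the kernel decays like a Gaussian in $\zeta$, while $\zeta\mapsto\Fc_F[f(\zeta,\,\cdot\,)](\lambda)$ lies in $L^2(E)$ for almost every $\lambda$ by Fubini and Plancherel. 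I would justify this identity first on a dense class of CR functions in $L^1(\Nc)\cap L^2(\Nc)$ and then pass to the limit via the direct-integral isometry of Proposition~\ref{prop:4}(4).

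With the identity in hand the two implications follow. For $(2)\Rightarrow(1)$: if $\Fc_F[f(\zeta,\,\cdot\,)]$ is supported in $K$ for almost every $\zeta$, then by Fubini $\Fc_F[f(\zeta,\,\cdot\,)](\lambda)=0$ for almost every $\zeta$, for almost every $\lambda\in\Lambda_+\setminus K$; for such $\lambda$ the identity gives $\langle v_\lambda\,\vert\,\pi_\lambda(\zeta_0,0)e_{\lambda,0}\rangle=0$ for every $\zeta_0$, and since the coherent states are total in $\sH_\lambda$ (differentiating $\zeta_0\mapsto\pi_\lambda(\zeta_0,0)e_{\lambda,0}$ at the origin produces all holomorphic polynomials, which are dense in $\sH_\lambda$) we get $v_\lambda=0$, i.e.\ $\pi_\lambda(f)=0$. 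For $(1)\Rightarrow(2)$: if $v_\lambda=0$ for almost every $\lambda\in\Lambda_+\setminus K$, the identity forces $\int_E\ee^{2\langle\lambda_\C,\Phi(\zeta_0,\zeta)\rangle}\,\ee^{-\langle\lambda,\Phi(\zeta)\rangle}\Fc_F[f(\zeta,\,\cdot\,)](\lambda)\,\dd\zeta=0$ for every $\zeta_0$; differentiating under the integral sign at $\zeta_0=0$ and using that $\langle\lambda,\Phi(\,\cdot\,,\zeta)\rangle$ ranges over all $\C$-linear forms on $E$ as $\zeta$ varies (nondegeneracy of $\langle\lambda,\Phi\rangle$), one sees that $\ee^{-\langle\lambda,\Phi(\,\cdot\,)\rangle}\Fc_F[f(\,\cdot\,,\cdot\,)](\lambda)$ is orthogonal to every holomorphic polynomial; since $f$ is CR, applying $\Fc_F$ in the central variable to $\overline{Z_w}f=0$ and invoking Proposition~\ref{prop:4}(3) shows that $\zeta\mapsto\ee^{\langle\lambda,\Phi(\zeta)\rangle}\Fc_F[f(\zeta,\,\cdot\,)](\lambda)$ is antiholomorphic, whence this orthogonality forces $\Fc_F[f(\zeta,\,\cdot\,)](\lambda)=0$ for almost every $\zeta$. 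Since a CR element of $L^2(\Nc)$ has $\Fc_F[f(\zeta,\,\cdot\,)]$ supported in $\overline{\Lambda_+}$ for almost every $\zeta$ (Proposition~\ref{prop:4}(4) and Fourier inversion on $\Nc$), and $\partial\Lambda_+$ is Lebesgue-null, it follows that $\Fc_F[f(\zeta,\,\cdot\,)]$ is supported in $K$ for almost every $\zeta$, which is (2).

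The main obstacle is the implication $(1)\Rightarrow(2)$: turning ``all holomorphic moments vanish'' into $\Fc_F[f(\zeta,\,\cdot\,)](\lambda)=0$ requires precise control of both the antiholomorphy and the integrability of $\zeta\mapsto\Fc_F[f(\zeta,\,\cdot\,)](\lambda)$ coming from the CR condition, which essentially amounts to re-deriving the Bargmann-type correspondence underlying Proposition~\ref{prop:4}. For this reason the cleanest route is to deduce the statement directly from~\cite[Proposition 2.7]{Calzi3}, which establishes the corresponding equivalence in the requisite generality, and to combine it with the Plancherel isomorphism of Proposition~\ref{prop:4}(4) to transfer it to the $L^2$ setting; the Fubini bookkeeping relating ``almost every $\lambda$'' to ``almost every $\zeta$'' and the null set $\partial\Lambda_+$ is then all that remains.
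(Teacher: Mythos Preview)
Your proposal is correct and ultimately coincides with the paper's approach: the paper gives no argument at all beyond recording that the statement is a consequence of~\cite[Proposition 2.7]{Calzi3}, which is exactly the route you settle on in your final paragraph. The direct coherent-state computation you sketch beforehand is more than the paper provides; it is essentially sound for $(2)\Rightarrow(1)$, and you are right to flag the $(1)\Rightarrow(2)$ step (turning vanishing holomorphic moments into pointwise vanishing via the CR condition) as the delicate point that is best absorbed into the cited reference rather than reproved here.
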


\subsection{Paley--Wiener--Schwartz Theorems}

We now indicate the significance of the spaces $\Oc_K(\Nc)$ defined in the Introduction in relation with the Paley--Wiener--Schwartz theorems on $\Nc$.

The following results summarizes~\cite[Theorem 3.3 and Proposition 5.7]{Calzi3}.

\begin{prop}\label{prop:1}
Let $K$ be a  compact convex subset of $F'$. Then, the following hold:
\begin{enumerate}
\item[\textnormal{(1)}] if $f\in \Hol(E\times F_\C)$ and there are $N,C>0$ such that
\begin{equation}\label{eq:2}
\abs{f(\zeta,z)}\meg C (1+\abs{\zeta}^2+\abs{z})^N \ee^{H_K(\rho(\zeta,z))}
\end{equation}
for every $(\zeta,z)\in E\times F_\C$, then $f_0\in \Oc_{K\cap \Phi(E)^\circ}(\Nc)$;

\item[\textnormal{(2)}] if $u\in \Oc_K(\Nc)$, then there is a unique $f\in \Hol(E\times F_\C)$ such that $f_0=u$; in addition,
\[
f(\zeta,z)=\frac{1}{(2\pi)^m} \int_K \Fc_F[u(\zeta,\,\cdot\,)](\lambda )\ee^{i \langle \lambda_\C, z\rangle}\,\dd \lambda
\]
for every $(\zeta,z)\in E\times F_\C$, and $f$ satisfies the estimates~\eqref{eq:2}.
\end{enumerate}
\end{prop}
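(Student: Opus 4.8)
My plan is to separate the rôles of the two variables and reduce everything to the classical (abelian) Paley--Wiener--Schwartz theorem on $F\cong\R^m$; at the level of this paper the statement is really a repackaging of \cite[Theorem 3.3 and Proposition 5.7]{Calzi3}, so I indicate the main ideas. The bridge between the holomorphic and the Fourier pictures is the following reformulation of the CR condition. Write $\widehat u(\zeta,\lambda):=\Fc_F[u(\zeta,\,\cdot\,)](\lambda)$ for the partial Fourier transform in the $F$-variable. From the explicit expression for $Z_w$ recalled before Definition~\ref{OK:def}, the $F$-component of $\overline{Z_w}$ is $-i\,\partial_{\Phi(\zeta,w)}$, so applying $\Fc_F$ turns $\overline{Z_w}u=0$ into the linear transport equation $\partial_{\bar w}\widehat u(\zeta,\lambda)=-\langle\lambda_\C,\Phi(\zeta,w)\rangle\,\widehat u(\zeta,\lambda)$ for every $w\in E$. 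Since $\partial_{\bar w}\langle\lambda,\Phi(\zeta)\rangle=\langle\lambda_\C,\Phi(\zeta,w)\rangle$, this says exactly that
\[
v(\zeta,\lambda):=\ee^{\langle\lambda,\Phi(\zeta)\rangle}\,\widehat u(\zeta,\lambda)
\]
is, for each fixed $\lambda$, an entire function of $\zeta\in E$; thus a CR distribution on $\Nc$ with partial Fourier transform supported in $K$ is the same datum as a ($K$-compactly-supported) distribution $v(\zeta,\,\cdot\,)$ depending entirely on $\zeta$.

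For part (2), given $u\in\Oc_K(\Nc)$ the map $\zeta\mapsto\widehat u(\zeta,\,\cdot\,)$, hence $\zeta\mapsto v(\zeta,\,\cdot\,)$, is smooth with values in the space of distributions on $F'$ supported in the fixed compact convex set $K$. I would set $f(\zeta,z)$ equal to $(2\pi)^{-m}$ times the duality pairing of $v(\zeta,\,\cdot\,)$ with the entire function $\lambda\mapsto\ee^{i\langle\lambda_\C,z\rangle}$ --- equivalently, of $\widehat u(\zeta,\,\cdot\,)$ with $\lambda\mapsto\ee^{i\langle\lambda_\C,z-i\Phi(\zeta)\rangle}$, which is the integral of the statement once the conventions are unwound. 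Differentiation under the bracket (or Morera) shows $f$ is holomorphic in $z$; and since $v(\,\cdot\,,\lambda)$ is entire and varies continuously with $\lambda$, $f$ is holomorphic in $\zeta$ as well, so $f\in\Hol(E\times F_\C)$. Restricting to $\cM$, i.e.\ setting $z=x+i\Phi(\zeta)$, the weight $\ee^{\langle\lambda,\Phi(\zeta)\rangle}$ built into $v$ is cancelled exactly by $\ee^{i\langle\lambda_\C,i\Phi(\zeta)\rangle}$, and Fourier inversion on $F$ yields $f_0=u$. The estimate~\eqref{eq:2} follows by bounding the bracket: on $K$ one has $|\ee^{i\langle\lambda_\C,z-i\Phi(\zeta)\rangle}|=\ee^{-\langle\lambda,\rho(\zeta,z)\rangle}\le\ee^{H_K(\rho(\zeta,z))}$, while the finitely many $\lambda$-derivatives allowed by the (locally uniformly finite, since $u\in\Sc'(\Nc)$) order of $\widehat u(\zeta,\,\cdot\,)$ only produce powers of $|z-i\Phi(\zeta)|\lesssim 1+|z|+|\zeta|^2$. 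Uniqueness of $f$ with $f_0=u$ is the identity principle for generic CR submanifolds: $\cM$ is generic in $E\times F_\C$ (its complex tangent space has real dimension $2n$ whereas $\dim_\R\cM=2n+m$, so $T_p\cM+iT_p\cM=T_p(E\times F_\C)$), hence a holomorphic function vanishing on $\cM$ vanishes identically; equivalently, the foliation of Remark~\ref{foliation-remark} together with the Cauchy--Riemann equations recovers the full transverse jet of $f$ along $\cM$ from $f_0$.

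For part (1), fix $\zeta$ and apply the classical Paley--Wiener--Schwartz theorem on $F\cong\R^m$ (see, e.g., \cite[Section 4.3]{Hormander}) to the entire function $z\mapsto f(\zeta,z)$ on $F_\C$: after translating $z\mapsto z-i\Phi(\zeta)$, its modulus is $\le C_\zeta\,(1+|z|)^N\,\ee^{H_K(\Im z)}$, and since $H_K$ is the supporting function of the compact convex set $K$, this forces $\supp\widehat u(\zeta,\,\cdot\,)\subseteq K$, with now $\widehat u=\Fc_F[f_0(\zeta,\,\cdot\,)]$. Moreover $f_0\in C^\infty(\Nc)$ (it is real-analytic, being a restriction of a holomorphic function to $\cM$), $f_0\in\Sc'(\Nc)$ (on $\cM$ one has $\rho\equiv0$, so the hypothesis reduces to the polynomial bound $C(1+|\zeta|^2+|x|)^N$), and $f_0$ is CR. To sharpen $K$ to $K\cap\Phi(E)^\circ$ I would again use $v(\,\cdot\,,\lambda)=\ee^{\langle\lambda,\Phi(\,\cdot\,)\rangle}\widehat u(\,\cdot\,,\lambda)$: the growth hypothesis bounds $\widehat u(\zeta,\,\cdot\,)$, hence $v(\zeta,\,\cdot\,)$, with a weight $\ee^{O(|\zeta|^2)}$, so $v(\,\cdot\,,\lambda)$ is entire of order $\le 2$ in $\zeta$; if $\langle\lambda_0,\Phi(\zeta_0)\rangle<0$ for some $\zeta_0\in E$, then along the line $\C\zeta_0$ the factor $\ee^{\langle\lambda,\Phi(t\zeta_0)\rangle}=\ee^{|t|^2\langle\lambda,\Phi(\zeta_0)\rangle}$ decays like a Gaussian for $\lambda$ near $\lambda_0$, which is incompatible with $v(\,\cdot\,,\lambda)$ having order $\le 2$ unless it vanishes identically near $\lambda_0$ (a Phragm\'en--Lindel\"of argument after testing against a bump in $\lambda$). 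Hence $\supp\widehat u(\zeta,\,\cdot\,)\subseteq\{\lambda\colon\langle\lambda,\Phi(\zeta')\rangle\ge0\ \text{for all }\zeta'\in E\}=\Phi(E)^\circ$ for every $\zeta$, which combined with $\supp\widehat u(\zeta,\,\cdot\,)\subseteq K$ gives $f_0\in\Oc_{K\cap\Phi(E)^\circ}(\Nc)$.

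The genuinely delicate step is the last one: extracting from the Cauchy--Riemann equations the quadratic exponential weight on the partial Fourier transform and showing it confines the spectrum to $\Phi(E)^\circ$. Alternatively one can reduce the general tempered estimate to an $L^2$ statement --- regularizing in $\zeta$ and cutting off in $\lambda$ --- and then quote Proposition~\ref{prop:5}, using that $\Phi(E)^\circ=\overline{\Lambda_+}$ under the standing hypothesis $\Lambda_+\neq\emptyset$. The other place needing care is the constant-chasing in~\eqref{eq:2}, where the $|\zeta|^2$ inside the polynomial factor (rather than merely $|z|$) is dictated precisely by the $\Phi(\zeta)$-shift in the exponent of the duality bracket; everything else is routine once $f$ is described through $v$.
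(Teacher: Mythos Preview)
The paper does not give its own proof of this proposition: it simply records that the statement summarizes \cite[Theorem 3.3 and Proposition 5.7]{Calzi3}. Your sketch is therefore not competing with an argument in the paper but rather supplying one, and the overall strategy --- reduce to the classical Paley--Wiener--Schwartz theorem in the $F$-variable, and encode the CR condition as holomorphy of $v(\zeta,\lambda)=\ee^{\langle\lambda,\Phi(\zeta)\rangle}\widehat u(\zeta,\lambda)$ in $\zeta$ --- is exactly the right one and matches what is done in \cite{Calzi3}. Your derivation of the integral formula in (2), the verification that $f_0=u$, the growth estimate~\eqref{eq:2}, and the uniqueness via genericity of $\cM$ are all fine.

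There is, however, a genuine gap in your argument for the $\Phi(E)^\circ$ constraint in (1). You fix $\zeta_0$ with $\langle\lambda_0,\Phi(\zeta_0)\rangle<0$, test against a bump $\psi$ supported near $\lambda_0$, and observe that $V_\psi(t\zeta_0)=\langle v(t\zeta_0,\cdot),\psi\rangle$ is an entire function of $t\in\C$ satisfying $\abs{V_\psi(t\zeta_0)}\lesssim (1+\abs{t})^M\ee^{-\eps\abs{t}^2}$, hence is identically zero by Liouville. But this only gives $V_\psi\equiv 0$ on the single complex line $\C\zeta_0$, and an entire function on $E$ can vanish on a complex line without vanishing identically (take $\zeta_2$ on $\C^2$). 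Invoking ``order $\leq 2$'' does not help: order~$2$ is perfectly compatible with vanishing on a line. The fix is immediate once seen: run the same argument on every affine translate $\zeta_1+\C\zeta_0$. Expanding $\langle\lambda,\Phi(\zeta_1+t\zeta_0)\rangle=\langle\lambda,\Phi(\zeta_1)\rangle+\abs{t}^2\langle\lambda,\Phi(\zeta_0)\rangle+2\Re\bigl(t\langle\lambda_\C,\Phi(\zeta_0,\zeta_1)\rangle\bigr)$, the quadratic term still dominates and gives $\abs{V_\psi(\zeta_1+t\zeta_0)}\to 0$ as $\abs{t}\to\infty$, so $V_\psi(\zeta_1)=0$ for every $\zeta_1\in E$. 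With this correction your argument goes through; the alternative route you mention at the end (regularize, reduce to $L^2$, and invoke Proposition~\ref{prop:5}) is also valid.
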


We shall now define a subspace of $\Sc(\Nc)\cap \Oc_K(\Nc)$ which is particularly simply described in terms of the Fourier transform. 
Observe that, for $\Sc(\Nc)\cap \Oc_K(\Nc)$ to be non-trivial, $K\cap \overline{\Lambda_+}$ must have a non-empty interior, because of Proposition~\ref{prop:4}. For this reason, we shall consider only closed subsets of $\overline{\Lambda_+}$.

\begin{deff}
Let $K$ be a closed subset of $\overline{\Lambda_+}$. Then, we define
\[
\widetilde \Sc_K(\Nc)\coloneqq \Set{\phi\in \Sc(\Nc)\colon \text{$\phi$ is CR and }\forall \lambda\in \Lambda_+\:\: \pi_\lambda(\phi)=\chi_K(\lambda)P_{\lambda,0}\pi_\lambda(\phi)P_{\lambda,0} }
\]
and
\[
\Fc_\Nc\colon \widetilde \Sc_{\overline{\Lambda_+}}(\Nc)\ni \phi \mapsto (\tr \pi_\lambda(\phi))_\lambda.
\]
\end{deff}

The following result is a consequence of~\cite[Proposition 5.2]{Calzi3}.

\begin{prop}\label{prop:8}
Let $K$ be a closed subset of $\overline{\Lambda_+}$. Then, $\Fc_\Nc$ induces an isomorphism of $\widetilde \Sc_K(\Nc)$ onto the space of Schwartz functions on $F'$ supported in $K$.
In addition, for every $f\in \widetilde \Sc_K(\Nc)$,
\[
f(\zeta,x)=\frac{2^{n-m}}{\pi^{n+m}}\int_K (\Fc_\Nc f)(\lambda) \abs{\Pfaff(\lambda)} \ee^{\langle\lambda_\C, i x-\Phi(\zeta)\rangle}\,\dd \lambda
\]
for every $(\zeta,z)\in \Nc$, and
\[
\Fc_F[f(\zeta,\,\cdot\,)](\lambda)=\frac{2^n \abs{\Pfaff(\lambda)}}{\pi^n}  \ee^{-\langle \lambda, \Phi(\zeta)\rangle}(\Fc_\Nc f)(\lambda)
\]
for every $\zeta\in E$ and for every $\lambda\in \Lambda_+$, while $\Fc_F[f(\zeta,\,\cdot\,)](\lambda)=0$ for every $\zeta\in E$ and for every $\lambda\in F'\setminus \Lambda_+$.
\end{prop}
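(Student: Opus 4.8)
The plan is to obtain everything from the Plancherel theory of $\Nc$ recorded in Proposition~\ref{prop:4}, together with~\cite[Proposition 5.2]{Calzi3}, which I expect to establish the isomorphism and the inversion formula in the case $K=\overline{\Lambda_+}$. First I would note that, for $\phi\in\widetilde \Sc_K(\Nc)$, the defining relation $\pi_\lambda(\phi)=\chi_K(\lambda)P_{\lambda,0}\pi_\lambda(\phi)P_{\lambda,0}$, together with the fact that $P_{\lambda,0}=\langle\,\cdot\,\vert e_{\lambda,0}\rangle e_{\lambda,0}$ is the rank-one orthogonal projection onto $\C e_{\lambda,0}$, forces $\pi_\lambda(\phi)=\chi_K(\lambda)(\Fc_\Nc\phi)(\lambda)P_{\lambda,0}$, where $(\Fc_\Nc\phi)(\lambda)=\tr\pi_\lambda(\phi)=\langle\pi_\lambda(\phi)e_{\lambda,0}\vert e_{\lambda,0}\rangle$. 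In particular $\widetilde \Sc_K(\Nc)\subseteq\widetilde \Sc_{\overline{\Lambda_+}}(\Nc)$, and inside the latter space the condition $\phi\in\widetilde \Sc_K(\Nc)$ is equivalent to $\pi_\lambda(\phi)=0$ for $\lambda\notin K$, hence (since $P_{\lambda,0}\neq0$) to $\supp(\Fc_\Nc\phi)\subseteq K$. Granting that $\Fc_\Nc$ is an isomorphism of $\widetilde \Sc_{\overline{\Lambda_+}}(\Nc)$ onto the closed subspace of $\Sc(F')$ consisting of the functions supported in $\overline{\Lambda_+}$ --- this support condition being automatic for CR functions by Proposition~\ref{prop:5} --- the first assertion follows by restriction.

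Next I would prove the reconstruction formula. For $\phi\in\widetilde \Sc_{\overline{\Lambda_+}}(\Nc)$ the operators $\pi_\lambda(\phi)=(\Fc_\Nc\phi)(\lambda)P_{\lambda,0}$ are trace class, and $\lambda\mapsto\abs{(\Fc_\Nc\phi)(\lambda)}$ has Schwartz decay; thus the Fourier inversion formula attached to the Plancherel decomposition of Proposition~\ref{prop:4}(4) converges absolutely and reads
\[ f(\zeta,x)=\frac{2^{n-m}}{\pi^{n+m}}\int_{\Lambda_+}\tr\bigl[\pi_\lambda(\zeta,x)^{-1}\pi_\lambda(f)\bigr]\abs{\Pfaff(\lambda)}\,\dd\lambda. \]
Substituting $\pi_\lambda(f)=(\Fc_\Nc f)(\lambda)P_{\lambda,0}$, observing that a short computation from Proposition~\ref{prop:4}(2) gives $\tr[\pi_\lambda(\zeta,x)^{-1}P_{\lambda,0}]=\langle e_{\lambda,0}\vert\pi_\lambda(\zeta,x)e_{\lambda,0}\rangle=\overline{\ee^{-\langle\lambda_\C,ix+\Phi(\zeta)\rangle}}=\ee^{\langle\lambda_\C,ix-\Phi(\zeta)\rangle}$ (the last equality because $\lambda$, $x$ and $\Phi(\zeta)$ are real), and recalling that $\Fc_\Nc f$ is supported in $K$, I obtain the first displayed identity of the statement.

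For the partial Fourier transform identity I would simply apply $\Fc_F$ in the variable $x$ to the reconstruction formula. Writing $\ee^{\langle\lambda_\C,ix-\Phi(\zeta)\rangle}=\ee^{-\langle\lambda,\Phi(\zeta)\rangle}\ee^{i\langle\lambda,x\rangle}$ and interchanging the two integrations (justified by the Schwartz decay), together with $\Fc_F^{-1}g(x)=(2\pi)^{-m}\int g(\lambda)\ee^{i\langle\lambda,x\rangle}\,\dd\lambda$, one gets, for $\lambda\in\Lambda_+$,
\[ \Fc_F[f(\zeta,\,\cdot\,)](\lambda)=(2\pi)^m\frac{2^{n-m}}{\pi^{n+m}}\abs{\Pfaff(\lambda)}\ee^{-\langle\lambda,\Phi(\zeta)\rangle}(\Fc_\Nc f)(\lambda)=\frac{2^n\abs{\Pfaff(\lambda)}}{\pi^n}\ee^{-\langle\lambda,\Phi(\zeta)\rangle}(\Fc_\Nc f)(\lambda). \]
Both sides are continuous, so this persists on $\overline{\Lambda_+}$; on $\partial\Lambda_+$ the right-hand side vanishes because $\Pfaff=\det\langle\,\cdot\,,\Phi\rangle$ vanishes there (the form being only positive \emph{semi}definite on the boundary), while for $\lambda\notin\overline{\Lambda_+}$ the left-hand side vanishes since $\Fc_F[f(\zeta,\,\cdot\,)]$ is supported in $K\subseteq\overline{\Lambda_+}$ by Proposition~\ref{prop:5}. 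This gives the last assertion.

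I expect the only genuinely delicate point to be the one I am importing from~\cite[Proposition 5.2]{Calzi3}: that $\Fc_\Nc$ maps $\widetilde \Sc_{\overline{\Lambda_+}}(\Nc)$ \emph{onto} the Schwartz functions supported in $\overline{\Lambda_+}$. The forward inclusion amounts to showing that $\Fc_\Nc\phi$, which on $\Lambda_+$ equals $\int_E\Fc_F[\phi(\zeta,\,\cdot\,)](\lambda)\ee^{-\langle\lambda,\Phi(\zeta)\rangle}\,\dd\zeta$, is $C^\infty$ with all Schwartz seminorms finite up to and across $\partial\Lambda_+$, where the Gaussian weight $\ee^{-\langle\lambda,\Phi(\zeta)\rangle}$ degenerates; the surjectivity amounts to checking that the candidate inverse $g\mapsto\frac{2^{n-m}}{\pi^{n+m}}\int g(\lambda)\abs{\Pfaff(\lambda)}\ee^{\langle\lambda_\C,ix-\Phi(\zeta)\rangle}\,\dd\lambda$ is a Schwartz function on $\Nc$, the factor $\abs{\Pfaff(\lambda)}$ being exactly what absorbs the boundary degeneracy. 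The rest is the formal bookkeeping above.
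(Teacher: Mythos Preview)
Your proposal is correct and follows essentially the same route as the paper: the first three assertions are imported from~\cite[Proposition 5.2 and its proof]{Calzi3}, while the vanishing of $\Fc_F[f(\zeta,\,\cdot\,)]$ outside $\Lambda_+$ is obtained from Proposition~\ref{prop:5}. You supply more explicit bookkeeping than the paper (which simply cites the reference), and your separate treatment of $\partial\Lambda_+$ via the vanishing of $\abs{\Pfaff}$ there is a useful detail that the paper leaves implicit.
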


\begin{proof}
The first three assertions follow from~\cite[Proposition 5.2 and its proof]{Calzi3}, while the last assertion follow from Proposition~\ref{prop:5}.
\end{proof}

\section{The Plancherel--P\'olya Theorem and its consequences}\label{P-L:sec}

In this section we prove Theorem~\ref{prop:6}. 
The case in which $\Nc$ is abelian and $K$ is a cube centred at the
origin  is essentially~\cite[No.\ 45]{PlancherelPolya}. The general
case follows from the one-dimensional case.  As a consequence, we deduce that the spaces
$\Bc_K^p(\Nc)$ are Banach spaces for $p\in[1,\infty]$ and quasi-Banach
spaces for $p\in(0,1)$.

\begin{proof}[Proof of  Theorem~\ref{prop:6}.]
\textsc{Step I.} We assume first that $\Nc=\R$, so that
$\Lambda_+=\R$. We may assume that $K\neq \emptyset$. Then, take $f\in
\Bc^p_K(\C)$. Observe that, up to replacing $f$ with  $\ee^{-i
  \alpha\,\cdot\,}f$,  where $\alpha$ is the mid-point of $K$, we may
assume that $K=[-\kappa ,\kappa ]$, so that $H_K(x)=\kappa \abs{x}$
for every $x\in \R$.  In addition, if $p<\infty$, then~\cite[No.\
30]{PlancherelPolya} implies that $f_0\in L^\infty(\Nc)$, since $f$ is
of exponential type (\emph{a priori} possibly larger than $\kappa$).  
Therefore,~\cite[Lemma 3.1]{MonguzziPelosoSalvatori} implies that  
\[
\abs{f(z)}\meg \norm{f_0}_{L^\infty(\R)}\ee^{\kappa \abs{\Im z}}
\]
for every $z\in \C$, so that $f$ is of exponential type at most $\kappa$ and the assertion follows in this case if $p=\infty$. If, otherwise, $p<\infty$, the assertion follows in this case thanks to~\cite[No.\ 27]{PlancherelPolya}.

\textsc{Step II.} Now, consider the general case.
Define, for every $h\in \partial B_F(0,1)$, for every $\zeta\in E$, and for every $x\in (\R h)^\perp$,
\[
f^{(\zeta,x,h)}\colon\C \ni w \mapsto f(\zeta,x+i\Phi(\zeta)+w h)\in \C,
\]
so that $f^{(\zeta,x,h)}$ is holomorphic on $\C$, and for every $\eps>0$ there is $C_{\eps,h,\zeta,x}>0$ such that
\[
\abs{f^{(\zeta,x,h)}(w)}\meg C_{\eps,h,\zeta,x}\ee^{ C_{\eps,h,\zeta,x}\abs{\Re w}+ H_{K_\eps}((\Im w )h)}
\]
for every $w\in \C$, where $K_\eps\coloneqq K+\overline B_{F'}(0,\eps)$.
Observe that, if $\pi_h$ is the self-adjoint projector of $F'$ onto
$((\R h)^\circ)^\perp$, canonically identified with the dual of $\R
h$, then 
\[
H_{K_\eps}( t h)= H_{\pi_h(K_{\eps})}(t h)
\]
for every $t\in \R$. Therefore, we may find a closed bounded interval
$K_{\eps,h}$ in $\R$ in such a way that $H_{K_\eps}(t h)=H_{K_{\eps,h}}(t)$ for every $t\in\R$. 
In addition, $f^{(\zeta,x,h)}_0=f_0(\zeta,x+\,\cdot\, h)\in L^p(\R)$
for almost every $(\zeta,x)\in E\times (\R h)^\perp$, so that
$f^{(\zeta,x,h)}\in \Bc_{K_{\eps,h}}^p(\R)$, for almost every $(\zeta,x)\in
E\times (\R h)^\perp$. Then, \textsc{step II} implies that 
\[
\norm{f^{(\zeta,x,h)}_t}_{L^p(\R)}\meg \ee^{H_{K_{\eps,t}}(t)} \norm{f^{(\zeta,x,h)}_0}_{L^p(\R)} 
\]
for every $t\in \R$ and for almost every $(\zeta,x)\in E\times (\R h)^\perp$. 
Since
\[
\norm{f_{t h}}_{L^p(\Nc)}= \norm*{(\zeta,x)\mapsto \norm{f^{(\zeta,x,h)}_t}_{L^p(\R)}}_{L^p(E\times (\R h)^\perp)}
\]
for every $t\in \R$, this proves that
\[
\norm{f_{t h}}_{L^p(\Nc)}\meg   \ee^{H_{K_{\eps,h}}(t)} \norm{f_0}_{L^p(\Nc)}=
\ee^{H_{K_\eps}(t h)} \norm{f_0}_{L^p(\Nc)} 
\]
for every $t\in \R$. By the arbitrariness of $\eps>0$ and $h\in \partial B_F(0,1)$, the   assertion follows.
\end{proof}

\begin{oss}\label{oss:1}
We observe explicitly that, in the course of the proof of Theorem~\ref{prop:6}, we actually gave a stronger characterization of $\Bc^p_K(\Nc)$.  Namely,  $\Bc^p_K(\Nc)$ may be equivalently defined as the set of $f\in \Hol(E\times F_\C)$ such that $f_0\in L^p(\Nc)$ and such that for every $h\in F$, for every $\eps>0$, and for almost every $(\zeta,x)\in E\times (\R h)^\perp$ there is $C_{\eps,h,\zeta,x}>0$ such that 
\[
\abs{f(\zeta,x+i\Phi(\zeta)+w h)}
\meg C_{\eps,h,\zeta,x} \ee^{C_{\eps,h,\zeta,x}\abs{\Re w}+H_{K+\overline B_{F'}(0,\eps)}((\Im w) h)}
\]
for every $w\in \C$. 
\end{oss}

\begin{teo}\label{prop:7}
Let $K$ be a  compact convex subset of
$F'$, and take $p\in (0,\infty)$. Then, the following hold:
\begin{enumerate}
\item[\textnormal{(1)}] there is a constant $C>0$ such that  $\norm{f_h}_{L^q(\Nc)}\meg \frac{C}{\abs{h}^{1/p-1/q}} \ee^{H_K(\rho(\zeta,z))}\norm{f}_{\Bc^p_K(\Nc)}$ for every non-zero $h\in F$, for every $q\in [p,\infty]$, and for every $f\in \Bc^p_K(\Nc)$;

\item[\textnormal{(2)}] $\lim_{(\zeta,z)\to \infty} \ee^{-H_K(\rho(\zeta,z))} f(\zeta,z)=0$ for every $f\in \Bc^p_K(\Nc)$.
\end{enumerate}
\end{teo}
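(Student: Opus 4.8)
The plan is to exploit the Plancherel--P\'olya inequality from Theorem~\ref{prop:6} together with a local "mean value" estimate along the fibers of the foliation described in Remark~\ref{foliation-remark}, following the same one-dimensional-plus-Fubini strategy already used in the proof of Theorem~\ref{prop:6}. For part (1), I would first reduce to the one-complex-variable situation: fix $h\in F$ with $\abs{h}=1$ (the general case follows by scaling, since $H_K$ is positively homogeneous), and for almost every $(\zeta,x)\in E\times(\R h)^\perp$ consider the slice $f^{(\zeta,x,h)}\colon w\mapsto f(\zeta,x+i\Phi(\zeta)+wh)$, which by Remark~\ref{oss:1} lies in $\Bc^p_{K_h}(\C)$ for a suitable closed bounded interval $K_h\subseteq\R$ with $H_{K_h}(t)=H_K(th)$. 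The key one-variable fact is that for $g\in\Bc^p_\kappa(\C)$ one has the pointwise bound $\abs{g(w)}^p\le c_{p,\kappa}\,\ee^{p\kappa\abs{\Im w}}\norm{g_{\Im w}}_{L^p(\R)}^p$ obtained by applying the mean-value property of $\abs{g}^p$ (subharmonicity) over a disc of fixed radius and then invoking Step I of Theorem~\ref{prop:6} to control the norm on nearby horizontal lines; this gives $\abs{g(w)}\le c_{p,\kappa}^{1/p}\ee^{\kappa\abs{\Im w}}\norm{g_0}_{L^p(\R)}$. Applying this with $w=0$ to the slice $f^{(\zeta,x,h)}$ and then raising to the $p$-th power and integrating in $(\zeta,x)$ over $E\times(\R h)^\perp$ yields the $q=\infty$ case of (1) (the factor $\abs{h}^{-1/p}$ appears because rescaling $h$ to unit length rescales the one-dimensional Lebesgue measure on the fiber). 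The full range $q\in[p,\infty]$ then follows by interpolation: $\norm{f_h}_{L^q}\le\norm{f_h}_{L^\infty}^{1-p/q}\norm{f_h}_{L^p}^{p/q}$, combined with Theorem~\ref{prop:6} for the $L^p$-factor, which produces the stated power $1/p-1/q$ of $1/\abs{h}$.

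For part (2), I would argue that $\ee^{-H_K(\rho(\zeta,z))}f(\zeta,z)\to 0$ as $(\zeta,z)\to\infty$ in $E\times F_\C$. Write $(\zeta,z)=(\zeta,x+i\Phi(\zeta)+ih)$ with $h=\rho(\zeta,z)$, so that in the notation of Remark~\ref{foliation-remark} the quantity to estimate is $\ee^{-H_K(h)}f_h(\zeta,x)$. The same slice-and-subharmonicity estimate as above, but now applied on discs of small radius $\eps$ and using $K_\eps=K+\overline B_{F'}(0,\eps)$, gives a pointwise bound of $\abs{f_h(\zeta,x)}$ by a constant times $\ee^{H_{K_\eps}(h)}$ times the $L^p$-norm of $f$ restricted to a small neighborhood of $(\zeta,x)$ inside the leaf $\cM+ih'$ for $h'$ near $h$. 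Since $f_0\in L^p(\Nc)$ and, by Theorem~\ref{prop:6}, the $L^p$-norms on all leaves are controlled by $\ee^{H_K(h')}\norm{f_0}_{L^p}$, the tail of this norm over a region going to infinity goes to zero; more precisely, integrating $\abs{f_{h'}}^p$ against $\ee^{-pH_K(h')}$ over a compact set of $h'$-values and over translates $E\times(\R h)^\perp$ escaping to infinity produces a quantity that tends to $0$, because it is dominated by $\norm{f_0}_{L^p(\Nc)}^p$ times the measure of a shrinking family of regions. Taking $\eps\to 0$ at the end removes the discrepancy between $H_{K_\eps}$ and $H_K$.

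The main obstacle I anticipate is handling the behavior "at infinity" uniformly in the $E$-direction and in the transversal $F$-direction simultaneously: unlike the classical case, the exponent $H_K(\rho(\zeta,z))$ depends on $z$ through $\rho$, and $\rho$ itself is quadratic in $\zeta$, so one must be careful that the subharmonicity estimate along a complex line in a single $F$-direction, when combined with the Fubini decomposition over $E\times(\R h)^\perp$, genuinely captures all ways $(\zeta,z)$ can go to infinity — in particular the case where $\zeta\to\infty$ but $h=\rho(\zeta,z)$ stays bounded. Here the point is that $f_0\in L^p(\Nc)$ already forces decay of $f_0(\zeta,x)$ in an averaged sense as $(\zeta,x)\to\infty$, and the local sup-bound upgrades this to genuine pointwise decay; making this rigorous requires combining the $q=\infty$ estimate of part (1) with a standard argument that an $L^p$ function with locally uniformly bounded "oscillation" (here: controlled by a fixed-radius sup via subharmonicity after a harmless exponential weight) must tend to zero at infinity. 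One should also separately observe that when $p=\infty$ part (2) is simply false in general (constants violate it unless $0\notin K$), so the statement as written is implicitly for $p<\infty$, consistent with the hypothesis $p\in(0,\infty)$.
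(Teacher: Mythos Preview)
There is a genuine gap in your argument for (1). The one-variable subharmonicity estimate you invoke, $\abs{g(w)}\meg c_{p,\kappa}\,\ee^{\kappa\abs{\Im w}}\norm{g_0}_{L^p(\R)}$, carries \emph{no} factor $\abs{\Im w}^{-1/p}$, and your claim that this factor appears ``because rescaling $h$ to unit length rescales the one-dimensional Lebesgue measure on the fiber'' is incorrect: that Jacobian is exactly cancelled when you reassemble the slice norms into $\norm{f_0}_{L^p(\Nc)}$ by Fubini, so nothing extra survives. The paper obtains the power $\abs{h}^{-1/p}$ by a genuinely different device: in the one-dimensional case it writes $f_y=f_0*\Kc_y$ with the explicit kernel $\Kc_y(x)=\dfrac{\ee^{ib(x+iy)}-\ee^{ia(x+iy)}}{2\pi i(x+iy)}$ (for $K=[a,b]$), and the pointwise decay $\abs{x+iy}^{-1}$ gives $\norm{\Kc_y}_{L^{p'}(\R)}\meg C\abs{y}^{-1/p}\ee^{H_K(y)}$; Young's inequality then yields the $L^\infty$-bound with the correct power of $\abs{y}$. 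There is a second, related problem in your execution: after the slice estimate you propose to ``raise to the $p$-th power and integrate in $(\zeta,x)$'', but that produces an $L^p$-type quantity, not $\norm{f_h}_{L^\infty(\Nc)}$. Controlling the latter from slice bounds requires $\sup_{(\zeta,x)}\norm{f^{(\zeta,x,h)}_0}_{L^p(\R)}$, which Fubini alone does not dominate by $\norm{f_0}_{L^p(\Nc)}$.

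For (2) your outline is reasonable, and the worry you flag about the $E$-direction is precisely where a direct subharmonicity argument runs into trouble: averaging over a Euclidean ball in $E\times F_\C$ centred at $(\zeta_0,z_0)$ shifts the leaf parameter by roughly $\Phi(\zeta_0)-\Phi(\zeta)$, which is unbounded as $\zeta_0\to\infty$. The paper sidesteps this entirely. Once (1) is in hand, it multiplies $f$ by dilated cutoffs $g^{(j)}(\,\cdot\,)=g(2^{j/2}\,\cdot\,)$ with $g_0\in\widetilde\Sc_{\overline B_{F'}(0,1)\cap\overline{\Lambda_+}}(\Nc)$ and $g_0(0,0)=1$, so that $fg^{(j)}\to f$ in $\Bc^p_{K+\overline B_{F'}(0,2^{-j})}(\Nc)$ and hence, by (1), in $\Bc^\infty$; since each $(fg^{(j)})_h$ already vanishes at infinity on $\Nc$ (because $g^{(j)}_h\in\Sc(\Nc)$), so does $f_h$, uniformly for $h$ in compact sets. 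This approximation device handles all directions at once and avoids the leaf-drift issue.
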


In particular, (1) ensures that $\Bc^p_K(\Nc)$ embeds continuously into $\Bc^q_K(\Nc)$.

This result extends classical, significant properties of functions in the Bernstein spaces $\Bc^p_\tau(\R)$, see~\cite[Section 20.1]{Levin-Lectures}. Observe, in fact, that  (2) implies that $f_h(\zeta,x)\to 0$ for $(\zeta,x)\in \Nc$, uniformly as $h$ runs through a compact subset of $F$, and that $f_h(\zeta,x)=o(\ee^{H_K(h)})$ for $h\to \infty$, for every $(\zeta,x)\in \Nc$.

\begin{proof}
(1) Notice that, by H\"older's inequality and Theorem~\ref{prop:6}, it will suffice to prove the case $q=\infty$. We may also assume that $K\neq \emptyset$.
Assume first that $E=\Set{0} $, $F=F'=\R$, and $K=[a,b]$ for some $a\meg b$, so that
\[
H_K(h)= \begin{cases}
-a h & \text{if $h\Meg 0$}\\
-b h & \text{if $h\meg 0$}
\end{cases}
\] 
for every $h\in \R$.  Define
\[
\Kc\colon \C \ni z \mapsto \Fc^{-1}(\chi_{[a,b]} \ee^{-(\,\cdot\,)\Im z})(\Re z)=\frac{\ee^{i b z}-\ee^{i a z}  }{2 \pi i z}\in \C,
\]
and observe that
\[
\norm{\Kc_y}_{L^{r}(\R)}\meg \frac{1}{\pi \abs{y}^{1/r'}}\ee^{H_K(y)} \norm{(1+\abs{\,\cdot\,}^2)^{-1/2}}_{L^{r}(\R)}
\]
for every $y\in \R$, and for every $r>1$. Then, set $C_r\coloneqq  \frac{1}{\pi } \norm{(1+\abs{\,\cdot\,}^2)^{-1/2}}_{L^{r}(\R)}$. Take $f\in \Bc^p_K(\R)$, and observe that by Theorem~\ref{prop:6} and~\cite[Theorem 7.3.1]{Hormander}, the Fourier transform of $f_0$ is a Radon measure supported in $K$,\footnote{Observe that the proof of Theorem~\ref{prop:6} shows that $f_0\in L^\infty(\R)$, so that the Fourier transform of $f_0$ is well defined.}. 
Therefore, 
\[
f_y=\Fc^{-1}(\ee^{-(\,\cdot\,) y}\Fc f_0) =\Fc^{-1}(\Fc f_0 \Fc \Kc_y) = f_0*\Kc_y
\]
for every $y\in \R$, so that
\[
\norm{f_y}_{L^\infty}\meg C_{p'} \frac{\ee^{H_K(y)}}{\abs{y}^{1/p}}
\]
for every $y\in \R$. 

The general case follows applying the previous case to the functions $\C\ni w\mapsto f(\zeta,x+ w h)\in \C$ for every $h\in \partial B_F(0,1)$ and for almost every $(\zeta,x)\in E\times (\R h)^\perp$, and then applying Fubini's theorem.

(2) By (1), it will suffice to prove that $f_h(\zeta,x)\to 0$ for $(\zeta,x)\to \infty$, uniformly as $h\in B_F(0,r)$, for every $r>0$. Then, take  $\phi \in \widetilde \Sc_{\overline B_{F'}(0,1)\cap \overline{\Lambda_+}}(\Nc)$  so that $\phi(0,0)=1$, and choose $g\in\Bc^\infty_{\overline B_{F'}(0,1)}(\Nc)$ so that $g_0=\phi$, so that~\cite[Theorem 4.2]{Calzi3} implies that the set of the $g_h$, for $h\in B_F(0,r)$, is bounded in $\Sc(\Nc)$. Define $g^{(j)}\coloneqq g(2^{j/2}\,\cdot\,)$  for every $j\in \N$, so that $g^{(j)}\in  \Bc^\infty_{\overline B_{F'}(0,2^{-j})}(\Nc)$. Then, $f g^{(j)}$ converges to $f$ in $\Bc^p_{K+\overline B_{F'}(0,2^{-j})}(\Nc)$, hence in $\Bc^\infty_{K+\overline B_{F'}(0,2^{-j})}(\Nc)$ by (1). The assertion follows by means of Theorem~\ref{prop:6}.
\end{proof}

\begin{cor}\label{cor:2}
Let $K$ be a compact convex subset of $F'$, and take $p\in
(0,\infty]$. Then, $\Bc^p_K(\Nc)$ is a quasi-Banach space and embeds continuously into
$\Hol(E\times F_\C)$. 
\end{cor}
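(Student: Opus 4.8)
The plan is to derive both assertions from the Plancherel--P\'olya inequality (Theorem~\ref{prop:6}) together with the local estimates already available through Theorem~\ref{prop:7} and the Paley--Wiener--Schwartz description in Proposition~\ref{prop:1}. First I would treat the continuous embedding into $\Hol(E\times F_\C)$. For a fixed point $(\zeta_0,z_0)$, one wants a bound $\abs{f(\zeta_0,z_0)}\meg C_{(\zeta_0,z_0)}\norm{f}_{\Bc^p_K}$. When $p\in(0,\infty)$ this is exactly (1) of Theorem~\ref{prop:7} (taking $q=\infty$, $h=\rho(\zeta_0,z_0)$), provided $\rho(\zeta_0,z_0)\neq 0$; and the case $\rho(\zeta_0,z_0)=0$, i.e.\ $(\zeta_0,z_0)\in\cM$, follows from the case $p=\infty$ applied after embedding $\Bc^p_K\hookrightarrow\Bc^q_K$ for some finite $q>p$, or directly from a mean-value/subharmonicity estimate on a fixed ball together with Theorem~\ref{prop:6}. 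For $p=\infty$ one needs a genuinely new (but easy) estimate: by the defining growth bound any $f\in\Bc^\infty_K$ satisfies $\abs{f(\zeta_0,z_0)}\meg C_\eps\ee^{C_\eps(\abs{\zeta_0}^2+\abs{\Re z_0})+H_{K_\eps}(\rho(\zeta_0,z_0))}\norm{f}_{\Bc^\infty_K}$ once one knows the $C_\eps$ can be chosen uniformly; this uniformity is precisely what Theorem~\ref{prop:6} (or Remark~\ref{oss:1}) grants, since it controls $\norm{f_h}_{L^\infty}$ by $\ee^{H_K(h)}\norm{f_0}_{L^\infty}$, and then a one-variable Phragm\'en--Lindel\"of/mean-value argument on the slices $w\mapsto f(\zeta_0,x_0+wh)$ upgrades the $L^\infty$-bound on slices to a pointwise bound with the stated exponential factor. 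In all cases, the constant depends only on $(\zeta_0,z_0)$, $K$, and $p$, so $f\mapsto f(\zeta_0,z_0)$ is continuous, and letting $(\zeta_0,z_0)$ vary over a compact set the bound is locally uniform; hence $\Bc^p_K\hookrightarrow\Hol(E\times F_\C)$ continuously (where $\Hol$ carries the topology of uniform convergence on compacta).

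Next I would prove completeness. Let $(f_k)$ be a Cauchy sequence in $\Bc^p_K$, equivalently $(f_{k,0})$ Cauchy in $L^p(\Nc)$, with limit $\phi\in L^p(\Nc)$. By the embedding just established, $(f_k)$ is Cauchy in $\Hol(E\times F_\C)$, hence converges uniformly on compacta to some $f\in\Hol(E\times F_\C)$; in particular $f_{k,0}\to f_0$ pointwise, so $f_0=\phi$ a.e.\ and $\norm{f_{k,0}-f_0}_{L^p}\to 0$. It remains to check $f\in\widetilde\Ec_K(E\times F_\C,\Nc)$, i.e.\ that $f$ satisfies the required exponential growth. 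Here I would use the characterization from Remark~\ref{oss:1} (or, equivalently, Proposition~\ref{prop:1}(1)--(2) applied to $f_0$): since $f_{k,0}\in\Oc_K(\Nc)$ by Proposition~\ref{prop:1}(1) (each $f_k$ satisfies~\eqref{eq:2} with $K$ replaced by $K_\eps$, hence $f_{k,0}\in\Oc_{K_\eps\cap\Phi(E)^\circ}(\Nc)$ for every $\eps$, thus in $\Oc_{K\cap\Phi(E)^\circ}(\Nc)$), and $\Oc_K(\Nc)$ is closed in $\Sc'(\Nc)$ — the conditions ``$f$ CR'' and ``$\Fc_F[f(\zeta,\cdot)]$ supported in $K$'' are preserved under convergence in $\Sc'(\Nc)$, which is implied by $L^p$-convergence — we get $f_0=\phi\in\Oc_K(\Nc)$. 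Then Proposition~\ref{prop:1}(2) (equivalently Theorem~\ref{cor:1}(1), though to avoid circularity one should cite Proposition~\ref{prop:1}) produces a unique holomorphic extension of $\phi$ satisfying~\eqref{eq:2}, which by Definition~\ref{exp-type-Kirc-deff} lies in $\widetilde\Ec_K(E\times F_\C,\Nc)$; by uniqueness of holomorphic extension this extension is $f$. Hence $f\in\Bc^p_K(\Nc)$ and $\norm{f_k-f}_{\Bc^p_K}\to 0$, proving completeness. Finally, $\Bc^p_K$ is a quasi-normed space (a normed space when $p\Meg 1$) because $\norm{\cdot}_{L^p(\Nc)}$ is, so it is a quasi-Banach space (Banach for $p\in[1,\infty]$).

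The main obstacle is the $p=\infty$ case of the embedding and, relatedly, verifying $f\in\widetilde\Ec_K$ for the limit: the defining growth condition involves an $\eps$-dependent constant $C_\eps$ with no a priori control as $k$ varies, so one cannot pass to the limit naively in Definition~\ref{exp-type-Kirc-deff}. The clean way around this — and the step I would emphasize — is to route everything through $\Oc_K(\Nc)$ and the Paley--Wiener--Schwartz correspondence of Proposition~\ref{prop:1}, which converts the ``soft'' growth condition into the ``hard'' condition that $f_0$ be CR with fibrewise Fourier support in $K$; the latter is manifestly closed under the relevant convergences, and Proposition~\ref{prop:1}(2) then hands back the growth estimate with explicit (indeed polynomial-times-exponential) control. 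The remaining subtlety is only that Proposition~\ref{prop:1}(1) yields support in $K\cap\Phi(E)^\circ$ rather than $K$; but since $\Phi(\zeta)\in\Lambda_+^\circ\subseteq\Phi(E)^\circ$ for all $\zeta$ (Remark~\ref{CR-remark}), and the Fourier supports are in $K\cap\Phi(E)^\circ\subseteq K$, this causes no loss for membership in $\Oc_K(\Nc)$ nor for the growth bound $H_{K_\eps}$, which only increases when $K$ is enlarged.
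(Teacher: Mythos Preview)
Your overall strategy is sound and the embedding argument is essentially the paper's (the paper simply says ``clearly $\Bc^\infty_K(\Nc)$ embeds continuously into $\Hol(E\times F_\C)$'', which is your $p=\infty$ case via Theorem~\ref{prop:6}, and then invokes Theorem~\ref{prop:7} for $p<\infty$). For completeness, however, the paper takes a shorter route than your detour through $\Oc_K(\Nc)$ and Proposition~\ref{prop:1}. Once the embedding is known, a Cauchy sequence $(f^{(j)})$ converges in $\Hol(E\times F_\C)$ to some $f$ with $f_0\in L^p(\Nc)$; then Theorems~\ref{prop:6} and~\ref{prop:7} give, with a constant $C$ \emph{independent of $j$}, the bound $\norm{f^{(j)}_h}_{L^\infty(\Nc)}\meg C\,\ee^{H_K(h)}\norm{f^{(j)}_0}_{L^p(\Nc)}$ for all $h$. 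Passing to the limit pointwise yields the same bound for $f$, which is already stronger than the growth condition in Definition~\ref{exp-type-Kirc-deff}; hence $f\in\Bc^p_K(\Nc)$ directly.

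Your route via $\Oc_K(\Nc)$ works, but there is a small gap you should be aware of: you assert that ``each $f_k$ satisfies~\eqref{eq:2} with $K$ replaced by $K_\eps$'' straight from membership in $\widetilde\Ec_K$. That is not true on the nose, since Definition~\ref{exp-type-Kirc-deff} allows exponential growth $\ee^{C_\eps(\abs{\zeta}^2+\abs{\Re z})}$ in the base variables, whereas~\eqref{eq:2} demands only polynomial growth $(1+\abs{\zeta}^2+\abs{z})^N$. To get~\eqref{eq:2} (in fact with $N=0$) you must first invoke Theorems~\ref{prop:6} and~\ref{prop:7} to obtain $\abs{f_k(\zeta,z)}\meg C\,\ee^{H_K(\rho(\zeta,z))}\norm{f_{k,0}}_{L^p}$. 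But once you have that uniform bound in hand, the paper's direct argument is available and there is no need to pass through $\Oc_K$ and Proposition~\ref{prop:1}(2) at all. So your detour is correct after this fix, but it reproves what the uniform estimate already gives for free.
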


\begin{proof}
Since clearly $\Bc^\infty_K(\Nc)$ embeds continuously into
$\Hol(E\times F_\C)$, Theorem~\ref{prop:7} implies that the same
holds for $\Bc^p_K(\Nc)$.  
Now, let $(f^{(j)})$ be a Cauchy sequence in $\Bc^p_K(\Nc)$. Then,
$(f^{(j)})$ converges to some $f$ in $\Hol(E\times F_\C)$ such that
$f_0\in L^p(\Nc)$. In addition, Theorem~\ref{prop:6} and
Theorem~\ref{prop:7} show that there is a constant $C>0$ such that 
\[
\norm{f_h}_{L^\infty(\Nc)}\meg C  \ee^{H_{K}(h) }  \norm{f_0}_{L^p(\Nc)}
\]
for every $h\in F$. 
Hence, $f\in \Bc^p_K(\Nc)$. Therefore, $(f^{(j)})$ converges to $f$ in $\Bc^p_K(\Nc)$.
\end{proof}

\begin{proof}[Proof of Theorem~\ref{cor:1}.]
(1) Take $f\in \Bc^p_K(\Nc)$. Then, Theorem~\ref{prop:6} and Proposition~\ref{prop:1} imply that $f_0\in \Oc_K(\Nc)\cap L^p(\Nc)$. Conversely, if $\phi\in \Oc_K(\Nc)\cap L^p(\Nc)$, then Proposition~\ref{prop:1} implies that there is a unique $f\in \Hol(E\times F_\C)$ such that $f_0=\phi$, and that $f\in \widetilde \Ec_K(E\times F_\C,\Nc)$. Therefore, $f\in \Bc^p_K(\Nc)$. 

(2)--(3) The assertions follow from (1) and~\cite[Proposition 5.7 and Corollaries 5.9 and 5.10]{Calzi3}.
\end{proof}

\begin{prop}\label{prop:3}
Take $p\in (0,\infty)$ and a  compact convex subset $K$ of $F'$.
Then, $\Bc_K^p(\Nc)\neq \Set{0}$ if and only if $K\cap \overline{\Lambda_+}$ has a non-empty interior.
In addition, $\Bc^\infty_K(\Nc)\neq \Set{0}$ if and only if $K\cap \Phi(E)^\circ\neq \emptyset$.
\end{prop}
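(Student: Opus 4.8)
The plan is to identify, via Theorem~\ref{cor:1}(1), the space $\Bc^p_K(\Nc)$ with $\Oc_K(\Nc)\cap L^p(\Nc)$, and then to prove the two implications separately, using Theorem~\ref{cor:1}(2) to reduce the necessity part to the cases $K\subseteq\overline{\Lambda_+}$ (when $p<\infty$) and $K\subseteq\Phi(E)^\circ$ (when $p=\infty$).

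\emph{Sufficiency.} Suppose first that $p<\infty$ and $K\cap\overline{\Lambda_+}$ has non-empty interior. Since $\Nc$ is Siegel, the interior of $\overline{\Lambda_+}$ equals $\Lambda_+$, so the interior of $K\cap\overline{\Lambda_+}$ is a non-empty open subset of $\Lambda_+$ contained in $K$; I would pick a closed ball $\overline B_{F'}(\lambda_0,r)$ inside it (a closed subset of $\overline{\Lambda_+}$). By Proposition~\ref{prop:8}, $\Fc_\Nc$ induces an isomorphism of $\widetilde\Sc_{\overline B_{F'}(\lambda_0,r)}(\Nc)$ onto the non-trivial space of Schwartz functions on $F'$ supported in $\overline B_{F'}(\lambda_0,r)$; picking $0\neq\phi$ there, the last assertion of Proposition~\ref{prop:8} gives that $\Fc_F[\phi(\zeta,\,\cdot\,)]$ is supported in $\overline B_{F'}(\lambda_0,r)\subseteq K$ for every $\zeta$, so that $\phi\in\Oc_K(\Nc)\cap\Sc(\Nc)\subseteq\Oc_K(\Nc)\cap L^p(\Nc)$. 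When $p=\infty$ and $K\cap\Phi(E)^\circ\neq\emptyset$, instead I would take $\lambda\in K\cap\Phi(E)^\circ$ and verify that $f\colon(\zeta,z)\mapsto\ee^{i\la\lambda_\C,z\ra}$ lies in $\Bc^\infty_K(\Nc)\setminus\Set{0}$: since $\Phi(E)$ is a cone, $\la\lambda,\Phi(\zeta)\ra\Meg0$, whence $\abs{f_0(\zeta,x)}=\ee^{-\la\lambda,\Phi(\zeta)\ra}\meg1$ and $\abs{f(\zeta,z)}=\ee^{-\la\lambda,\Phi(\zeta)\ra-\la\lambda,\rho(\zeta,z)\ra}\meg\ee^{H_K(\rho(\zeta,z))}\meg\ee^{H_{K+\overline B_{F'}(0,\eps)}(\rho(\zeta,z))}$, because $-\lambda\in-K$.

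\emph{Necessity.} By Theorem~\ref{cor:1}(2) I may assume $K=K\cap\overline{\Lambda_+}$ if $p<\infty$, and $K=K\cap\Phi(E)^\circ$ if $p=\infty$, so it suffices to show $\Bc^p_K(\Nc)=\Set{0}$ when $K$ has empty interior (for $p<\infty$), respectively when $K=\emptyset$ (for $p=\infty$). If $K=\emptyset$, then $H_{K+\overline B_{F'}(0,\eps)}=H_\emptyset\equiv-\infty$, so the defining estimate of $\widetilde\Ec_K(E\times F_\C,\Nc)$ forces $f\equiv0$. If $p<\infty$ and $K$ has empty interior, then $K$, being convex, lies in an affine hyperplane $\Set{\lambda\in F'\colon\la\lambda,v_0\ra=c}$ for some unit $v_0\in F$ and some $c\in\R$. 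Given $f\in\Bc^p_K(\Nc)$, put $\phi\coloneqq f_0\in\Oc_K(\Nc)\cap L^p(\Nc)$; for each $\zeta$, $\Fc_F[\phi(\zeta,\,\cdot\,)]$ is a tempered distribution supported in that hyperplane, hence, having finite order, is annihilated by a power of $\la\,\cdot\,,v_0\ra-c$, which transfers to the constant-coefficient ODE $(-i\p_{v_0}-c)^{N_\zeta+1}\phi(\zeta,\,\cdot\,)=0$ along each line in direction $v_0$. As $\phi$ is smooth, this forces $\phi(\zeta,y+tv_0)=\ee^{ict}q_{\zeta,y}(t)$ for $y\in v_0^\perp$, where $q_{\zeta,y}$ is a genuine polynomial of degree $\le N_\zeta$ whose coefficients are smooth in $(\zeta,y)$. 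Since $\abs{\phi(\zeta,y+tv_0)}=\abs{q_{\zeta,y}(t)}$ and Lebesgue measure on $\Nc=E\times F$, with $F=v_0^\perp\oplus\R v_0$, factors accordingly, Fubini forces $q_{\zeta,y}\equiv0$ for almost every $(\zeta,y)$, whence $\phi\equiv0$ and $f=0$.

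I expect the last step of the necessity part to be the main obstacle: although the structure theorem for tempered distributions supported in a hyperplane only produces the ``polynomial in $t$'' with a priori distributional coefficients, the point is that the smoothness of $\phi$ together with the ODE upgrades this to an honest polynomial, with coefficients read off from $\p_t^{\,j}[\ee^{-ict}\phi(\zeta,y+tv_0)]|_{t=0}$, after which one invokes the elementary fact that a non-zero polynomial does not lie in $L^p(\R)$ for any $p<\infty$. All the remaining steps are either short computations or direct appeals to Theorem~\ref{cor:1} and Proposition~\ref{prop:8}.
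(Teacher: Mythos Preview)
Your proof is correct. The paper's own proof is a terse two-liner: for $p<\infty$ it invokes Theorem~\ref{cor:1} together with~\cite[Corollary 5.9]{Calzi3}, and for $p=\infty$ it argues exactly as you do with the exponential $(\zeta,z)\mapsto\ee^{i\langle\lambda_\C,z\rangle}$ (the necessity being immediate from $\Bc^\infty_K=\Bc^\infty_{K\cap\Phi(E)^\circ}$ and the convention $H_\emptyset=-\infty$). So on the $p=\infty$ side your argument and the paper's coincide.

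Where you genuinely diverge is in the $p<\infty$ case. The paper outsources both directions to the external reference~\cite{Calzi3}; you instead stay inside the paper. For sufficiency you use Proposition~\ref{prop:8} to produce a non-zero Schwartz element of $\widetilde\Sc_{\overline B_{F'}(\lambda_0,r)}(\Nc)\subseteq\Oc_K(\Nc)\cap L^p(\Nc)$, which is clean and self-contained. For necessity you give an elementary hyperplane argument: after the reduction $K\subseteq\overline{\Lambda_+}$ via Theorem~\ref{cor:1}(2), a compact convex $K$ with empty interior sits in an affine hyperplane, the Fourier support condition then forces $\phi(\zeta,\,\cdot\,)$ to solve a constant-coefficient ODE along $v_0$, whence $\abs{\phi}$ restricted to each such line is a polynomial, and Fubini plus $p<\infty$ kills it. This is a pleasant direct proof that avoids any appeal to~\cite{Calzi3}. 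The trade-off is that your argument is longer and handles only the present statement, whereas the cited corollary presumably gives finer structural information about $\Oc_K(\Nc)\cap L^p(\Nc)$ that the paper reuses elsewhere (e.g.\ in parts~(2)--(3) of Theorem~\ref{cor:1}). One minor remark: in your necessity paragraph the case $K\cap\overline{\Lambda_+}=\emptyset$ should perhaps be singled out (your hyperplane argument is vacuous there), but it is covered by the same $H_\emptyset=-\infty$ observation you already made for $p=\infty$.
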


\begin{proof}
The first assertion follows from  Theorem~\ref{cor:1}  and~\cite[Corollary 5.9]{Calzi3}.  Since $\Bc^\infty_K(\Nc)=\Bc^{\infty}_{K\cap \Phi(E)^\circ}(\Nc)$ by  Theorem~\ref{cor:1}, in order to conclude it will suffice to observe that
the function $(\zeta,z)\mapsto \ee^{i \langle\lambda_\C, z\rangle}$ belongs to $B^\infty_{K}(\Nc)$ for every $\lambda\in K\cap \Phi(E)^\circ$.  
\end{proof}

\begin{prop}\label{prop:11}
Take $p\in (0,\infty]$, $\lambda\in F'$, and a  compact convex subset $K$ of $F'$ such that $\Bc_K^p(\Nc)\neq \Set{0}$. Then, multiplication by the function $(\zeta,z)\mapsto \ee^{i\langle \lambda_\C, z \rangle}$  induces a continuous linear mapping $\Bc^p_K(\Nc)\to \Bc^p_{K+\lambda}(\Nc)$ if and only in $\lambda\in \Phi(E)^\circ$.
\end{prop}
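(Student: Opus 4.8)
The plan is to reduce the statement to a pointwise growth estimate on the entire extension and then exploit the description of $\widetilde\Ec_K$ in terms of the supporting function together with the identity~\eqref{eq:1}. First observe that, for $f\in\Hol(E\times F_\C)$, multiplication by $g_\lambda\colon(\zeta,z)\mapsto\ee^{i\langle\lambda_\C,z\rangle}$ preserves holomorphy, and on the base submanifold $\cM$ (where $z=x+i\Phi(\zeta)$) one has $\abs{g_\lambda(\zeta,x+i\Phi(\zeta))}=\ee^{\langle\lambda,\Phi(\zeta)\rangle}$, which equals $1$ when $E=\Set 0$ but in general is unbounded unless $\langle\lambda,\Phi(\zeta)\rangle\meg 0$ for all $\zeta$, i.e.\ unless $\lambda\in\Phi(E)^\circ$. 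This already suggests why the condition $\lambda\in\Phi(E)^\circ$ is necessary: if $\lambda\notin\Phi(E)^\circ$, then $\abs{(g_\lambda f)_0(\zeta,x)}=\ee^{\langle\lambda,\Phi(\zeta)\rangle}\abs{f_0(\zeta,x)}$ grows exponentially in $\abs\zeta^2$ along directions where $\langle\lambda,\Phi(\zeta)\rangle>0$, and one should be able to choose a non-zero $f\in\Bc^p_K(\Nc)$ whose restriction $f_0$ does not decay fast enough to compensate, so that $(g_\lambda f)_0\notin L^p(\Nc)$.

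For the converse (sufficiency), suppose $\lambda\in\Phi(E)^\circ$ and take $f\in\Bc^p_K(\Nc)$. By Theorem~\ref{prop:7}(1) (or already Theorem~\ref{prop:6} when $p=\infty$) we have the pointwise bound $\abs{f(\zeta,z)}\meg C\,\ee^{C(\abs\zeta^2+\abs{\Re z})+H_{K+\overline B_{F'}(0,\eps)}(\rho(\zeta,z))}$. Multiplying by $g_\lambda$ contributes a factor $\abs{g_\lambda(\zeta,z)}=\ee^{-\langle\lambda,\Im z\rangle}=\ee^{-\langle\lambda,\rho(\zeta,z)+\Phi(\zeta)\rangle}$, and since $\lambda\in\Phi(E)^\circ$ the term $-\langle\lambda,\Phi(\zeta)\rangle$ is bounded above by a multiple of $\abs\zeta^2$ (indeed it is $\meg 0$ after rescaling, since $\Phi(\zeta)\in\Phi(E)$ for all $\zeta$), so it is absorbed into the $\ee^{C\abs\zeta^2}$ factor. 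The remaining term $-\langle\lambda,\rho(\zeta,z)\rangle$ combines with $H_{K+\overline B_{F'}(0,\eps)}(\rho(\zeta,z))$ to give $H_{\lambda+K+\overline B_{F'}(0,\eps)}(\rho(\zeta,z))$ by the first identity in~\eqref{eq:1}. Hence $g_\lambda f\in\widetilde\Ec_{K+\lambda}(E\times F_\C,\Nc)$. To see that $(g_\lambda f)_0\in L^p(\Nc)$ and that the map is bounded, note that on $\cM$ one has $\abs{(g_\lambda f)_0(\zeta,x)}=\ee^{\langle\lambda,\Phi(\zeta)\rangle}\abs{f_0(\zeta,x)}\meg\abs{f_0(\zeta,x)}$ because $\langle\lambda,\Phi(\zeta)\rangle\meg 0$; here I use that $\lambda\in\Phi(E)^\circ$ means $\langle\lambda,\Phi(\zeta)\rangle\Meg -1$ for all $\zeta$ with $\Phi(\zeta)$ in the relevant polar — one must be slightly careful with the normalization of polars as defined in the paper, but since $\Phi(t\zeta)=t^2\Phi(\zeta)$ the cone $\Phi(E)$ is stable under positive dilations, so $\langle\lambda,\Phi(\zeta)\rangle\Meg -1$ for all $\zeta$ forces $\langle\lambda,\Phi(\zeta)\rangle\Meg 0$ for all $\zeta$, whence $\langle\lambda,\Phi(\zeta)\rangle\meg 0$ is actually the statement that $-\lambda\in\Phi(E)^\circ$; I will check this sign convention against Remark~\ref{CR-remark} and adjust accordingly. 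Granting the correct sign, $\norm{(g_\lambda f)_0}_{L^p(\Nc)}\meg\norm{f_0}_{L^p(\Nc)}$, giving continuity with norm $\meg 1$.

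To complete the necessity direction rigorously, I would argue by contradiction: assume $\lambda\notin\Phi(E)^\circ$, so (again up to the dilation-invariance of $\Phi(E)$) there is $\zeta_0\in E$ with $\langle\lambda,\Phi(\zeta_0)\rangle>0$. Using the hypothesis $\Bc^p_K(\Nc)\neq\Set 0$ and Theorem~\ref{cor:1}, pick a non-zero $f\in\Bc^p_K(\Nc)$; by translating (the action of $\Nc$ on itself) and using that $\Oc_K(\Nc)$ is translation-invariant, I may arrange that $f_0$ is bounded below on a set of positive measure near a line $\R\zeta_0\times\{x_0\}$, and then $\ee^{\langle\lambda,\Phi(t\zeta_0)\rangle}=\ee^{t^2\langle\lambda,\Phi(\zeta_0)\rangle}\to\infty$ forces $(g_\lambda f)_0\notin L^p(\Nc)$ — more carefully, one should invoke the Plancherel--Pólya inequality in the $\zeta$-variable (Theorem~\ref{prop:6} applied after slicing, as in the proof there) to guarantee that $f_0$ cannot decay like $\ee^{-ct^2}$ along any real line unless it vanishes, since its slices are entire of a fixed exponential type. \textbf{The main obstacle} is precisely this last point: making the nonexistence of super-exponential decay along real lines in the $E$-directions precise. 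This is exactly where the quadratic nature of $\Phi$ enters and where the one-complex-variable slicing trick from the proof of Theorem~\ref{prop:6} must be upgraded to a lower bound (rather than the upper bound used there) — one wants a statement of the type: a nonzero $h\in\Bc^p_K(\C)$ restricted to $\R$ satisfies $\liminf_{R\to\infty}\ee^{cR^2}\!\int_{-R}^{R}\abs{h(u)}^p\,\dd u=\infty$ for every $c>0$, which follows since the Fourier transform of $h_0$ is compactly supported and nonzero, so $h_0$ cannot be a Gaussian-type decaying function. I expect this to be the only genuinely technical ingredient; everything else is bookkeeping with supporting functions via~\eqref{eq:1}.
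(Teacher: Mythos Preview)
Your sufficiency argument is essentially the paper's, modulo the sign slip you flagged: on $\cM$ one has $\abs{g_\lambda(\zeta,x+i\Phi(\zeta))}=\ee^{-\langle\lambda,\Phi(\zeta)\rangle}$, not $\ee^{+\langle\lambda,\Phi(\zeta)\rangle}$, and since $\Phi(E)$ is a cone the condition $\lambda\in\Phi(E)^\circ$ is exactly $\langle\lambda,\Phi(\zeta)\rangle\Meg 0$ for all $\zeta$. With that correction the bookkeeping with~\eqref{eq:1} works as you wrote.

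The necessity direction, however, has a real gap, and your proposed fix does not work. The slicing in the proof of Theorem~\ref{prop:6} is along $F$-directions, not $E$-directions: the one-variable functions there are $w\mapsto f(\zeta,x+i\Phi(\zeta)+wh)$ with $h\in F$ and $\zeta$ fixed. The function you want to control, $t\mapsto f_0(t\zeta_0,x_0)=f(t\zeta_0,x_0+i t^2\Phi(\zeta_0))$, does not sit in any one-variable Bernstein space --- its natural holomorphic extension in $t$ involves $t^2$, hence has quadratic rather than linear exponential type --- so the compact-Fourier-support argument you invoke is unavailable. Worse, elements of $\widetilde\Sc_K(\Nc)$ \emph{do} decay like $\ee^{-c\abs{\zeta}^2}$ in the $E$-directions (cf.~Proposition~\ref{prop:8}), so for $\lambda\notin\Phi(E)^\circ$ close enough to $\Phi(E)^\circ$ there exist nonzero $f\in\Bc^p_K(\Nc)$ with $(g_\lambda f)_0\in L^p(\Nc)$; you therefore cannot hope to show the map is ill-defined on every nonzero $f$ by a pointwise decay argument.

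The paper avoids this obstacle altogether by exploiting translation invariance of the $\Bc^p_K$-norm. Fix a nonzero $f$ with $\abs{f_0}\Meg\eps$ on $B((0,0),\eps)$ and pick $\zeta$ with $\langle\lambda,\Phi(\zeta)\rangle=-1$. The translates $f((t\zeta,0)^{-1}\,\cdot\,)$ all have the \emph{same} $\Bc^p_K$-norm, but on $B((t\zeta,0),\eps)$ one has $\abs{\psi_0}=\ee^{-\langle\lambda,\Phi(\,\cdot\,)\rangle}$ of order $\ee^{t^2/2}$ while the translated $f_0$ is bounded below by $\eps$, so $\norm{\psi\cdot f((t\zeta,0)^{-1}\,\cdot\,)}_{L^p(\Nc)}\gtrsim \ee^{t^2/2}$. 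This exhibits a bounded family with unbounded images, proving the multiplication operator is not continuous, without any lower bound on the decay of a fixed $f_0$.
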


Recall that $\Phi(E)^\circ=\overline{\Lambda_+}$ when $\Nc$ is a Siegel CR manifold (cf.~Remark~\ref{CR-remark}).

\begin{proof}
Define $\psi\colon (\zeta,z)\mapsto \ee^{i\langle \lambda_\C, z \rangle}$, and observe that $\psi$ is bounded if and only if $\lambda\in \Phi(E)^\circ$. Hence, if $\lambda\in \Phi(E)^\circ$, then multiplication by $\psi$ induces a continuous linear mapping $\Bc^p_K(\Nc)\to \Bc^p_{K+\lambda}(\Nc)$. 

Conversely, assume that  $\lambda\not \in \Phi(E)^\circ$. Fix a non-zero $f\in \Bc^p_K(\Nc)$, and observe that, by the translation invariance of $\Bc^p_K(\Nc)$ (and of its norm), the $f((\zeta,x)\,\cdot\,)$, for $(\zeta,x)\in \Nc$, form a bounded subset of $\Bc^p_K(\Nc)$. In particular, we may assume that there is $\eps>0$ such that
\[
\abs{f_0(\zeta,x)}\Meg \eps
\]
for every $(\zeta,x)\in B((0,0),\eps)$. In addition, by assumption, there is $\zeta\in E$ such that $\langle \lambda, \Phi(\zeta)\rangle=-1$. Observe that, by homogeneity, there is a constant $C>1$ such that
\[
\frac{1}{C}(\abs{\zeta}+\abs{x}^{1/2})\meg d((0,0),(\zeta,x))\meg C(\abs{\zeta}+\abs{x}^{1/2})
\]
for every $(\zeta,x)\in \Nc$, and that we may assume that $\eps>0$ is so small that
\[
\langle \lambda, \Phi(\zeta'+t \zeta)\rangle=-t^2+\langle \lambda, \Phi(\zeta')\rangle +2 t\Re\langle \lambda,\Phi(\zeta,\zeta')\rangle \meg -t^2/2
\]
for every $\zeta'\in B_E(0, \eps/C)$ and for every $t\Meg 1$.
Then,
\[
\norm{\psi f((t\zeta,0)^{-1}\,\cdot\,)}_{\Bc^p_K(\Nc)}\Meg\eps\norm{\chi_{B((t\zeta,0),\eps)} \ee^{-\langle \lambda, \Phi\circ \pr_E\rangle} }_{L^p(\Nc)}\Meg \eps \ee^{t^2/2} \norm{\chi_{B((t\zeta,0),\eps)}}_{L^p(\Nc)}
\]
for every $t\Meg 1$. Therefore, multiplication by $\psi$ does \emph{not} induce a continuous linear mapping $\Bc^p_K(\Nc)\to \Bc^p_{K+\lambda}(\Nc)$.
\end{proof}

\section{The Bernstein Inequality and a Characterization of the Bernstein Spaces}\label{sec:4}

We now prove Theorem \ref{Bern-ineq-thm} and Proposition~\ref{prop:9}. In Proposition~\ref{comp-tang-dir:Prop} we prove a weaker version dealing
with complex tangential derivatives.

\begin{proof}[Proof of Theorem \ref{Bern-ineq-thm}.]
\textsc{Step I.} Notice that, arguing by induction, it will suffice to
prove the first assertion when $k=1$.  
Take $f\in \Bc_K^p(\Nc)$ and a non-zero  $v\in F$. Arguing as in the proof of Theorem~\ref{prop:6}, we see that there is
a symmetric bounded closed interval $K_v=[-a_v,a_v]$ in $\R$, with
$a_v\Meg 0$, such that 
\[
H_{K}(t v)=H_{K_v}(t)=a_v\abs{t}
\]
for every $t\in \R$.
For every $(\zeta,x)\in E\times (\R v)^\perp$, define
\[
f^{(\zeta,x)}\colon \C\ni w\mapsto f(\zeta,x+i \Phi(\zeta)+w v)\in \C,
\]
so that $f^{(\zeta,x)}\in \Bc^p_{K_v}(\R)$ for almost every
$(\zeta,x)\in E\times (\R v)^\perp$. Then,
the classical Bernstein inequality, see e.g.~\cite[Theorem 4]{Andersen}, shows that
\[
\norm{ (f^{(\zeta,x)}_0)'}_{L^p(\R)}\meg a_v\norm{f^{(\zeta,x)}_0}_{L^p(\R)}
=H_K(v)\norm{ f^{(\zeta,x)}_0}_{L^p(\R)} 
\]
for almost every $(\zeta,x)\in E\times (\R v)^\perp$. Since 
\[
\norm{\partial_v
  f_0}_{L^p(\Nc)}=\abs{v}^{-1/p}\norm{(\zeta,x)\mapsto \norm{D
    f^{(\zeta,x)}_0}_{L^p(\R)}}_{L^p(E\times (\R v)^\perp)} 
\]
for every $k\in \N$, this implies that
\[
\norm{\partial_v f_0}_{L^p(\Nc)}\meg H_{K}(v) \norm{f_0}_{L^p(\Nc)}.
\]

\textsc{Step II.} Take a CR $\phi\in L^p(\Nc)$ such that $\partial_v^k \phi\in L^p(\Nc)$ for every $v\in \partial B_F(0,1)$ and for every $k\in \N$, and such that
\[
\limsup_{k\to \infty}\norm{\partial_v^k \phi}_{L^p(\Nc)}^{1/k}\meg H_{K}(v)
\]
for every $v\in \partial B_F(0,1)$. For every $v\in \partial B_F(0,1)$ and for every $\eps>0$, take $C_{1,v,\eps}>0$ such that
\[
\norm{\partial_v^k \phi}_{L^p(\Nc)}\meg   C_{1,v,\eps}
(H_{K}(v)+\eps)^k 
\]
for every $k\in \N$.

Take $\tau\in \widetilde\Sc_{\overline{\Lambda_+}\cap B_{F'}(0,1)}$ such that
$\tau(0,0)=1$, a non-negative $\eta\in \Sc(\Nc)$ such that $\norm{\eta}_{L^1(\Nc)}=1$, and define $\tau_j\coloneqq\tau(2^{-j}\,\cdot\,)$ and $\eta_j\coloneqq 2^{Q j}\eta(2^j\,\cdot\,)$ for every $j\in\N$. 
Observe that $\eta_j*( \phi\tau_j)$ is CR and belongs to $\Sc(\Nc)$.  
Set $\widetilde p=\min(2,p)$, and choose $\widetilde q,s\in [1,\infty]$ so that
\[
\frac{1}{\widetilde q}+\frac{1}{\widetilde p} = 1 +\frac12 \qquad \text{and} \qquad \frac{1}{\widetilde p}=\frac{1}{p}+\frac{1}{s}.
\]
By Young's and H\"older's inequalities, 
\[
\begin{split}
\limsup_{k\to \infty} \norm{\partial_v^k [\eta_j*( \phi\tau_j)]}_{L^2(\Nc)}^{1/k}
&\meg \limsup_{k\to \infty}  \left( \norm{\eta_j}_{L^{\widetilde q}(\Nc)}\norm{\partial_v^k (\phi \tau_j)}_{L^{\widetilde p}(\Nc)} \right)^{1/k}\\
&\meg \limsup_{k\to \infty} \left( \sum_{h=0}^k \binom{h}{k}  \norm{\partial_v^h \phi}_{L^p(\Nc)} \norm{\partial_v^{k-h}\tau_j}_{L^s(\Nc)}\right) ^{1/k}\\
&\meg \limsup_{k\to \infty} C_{1,v,\eps}^{1/k}\norm{\tau}_{L^s(\Nc)}^{1/k} \left(\sum_{h=0}^k \binom{h}{k}  (H_{K}(v)+\eps)^h 2^{-j(k-h)}\right) ^{1/k}\\
&=H_{K}(v)+\eps+2^{-j}
\end{split}
\]
for every $\eps>0$ and for every $j\in \N$.
By the arbitrariness of $\eps>0$, this implies that 
\[
\limsup_{k\to \infty} \norm{\partial_v^k [\eta_j*( \phi \tau_j)]}_{L^2(\Nc)}^{1/k}\meg H_{K}(v)+2^{-j}
\]
for every $j\in \N$.
By Proposition~\ref{prop:4}, this is equivalent to saying that
\[
\limsup_{k\to \infty} \left(\int_{\Lambda_+}
  \norm{\pi_\lambda(\eta_j*( \phi \tau_j))}_{\Lin^2(\sH_\lambda)}
  \abs{\la \lambda,v\ra}^k \abs{\Pfaff(\lambda)}\,\dd
  \lambda    \right)^{1/k}\meg H_{K}(v)+2^{-j}=H_{K_j}(v) 
\]
for every $v\in \partial B_F(0,1)$, where $K_j\coloneqq K+\overline B_{F'}(0, 2^{-j})$. Therefore, $\pi_\lambda(\eta_j*(\phi \tau_j))=0$ for almost every $\lambda\in\Lambda_+\setminus K_j$, since 
\[
K_j=\Set{\lambda'\in F'\colon \forall v\in \partial
  B_F(0,1)\quad \abs{\la \lambda',v \ra} \meg H_{K_j}(v)} 
\]
by the symmetry of $K_j$ (cf.~\cite[Theorem
4.3.2]{Hormander}). Therefore, for every $j\in \N$ there is a unique
$f^{(j)}\in \Bc^2_{K_j}(\Nc)$
such that $f^{(j)}_0=\eta_j*( \phi \tau_j)$, whence, in particular, $f^{(j)}\in \Bc^p_{K_j}(\Nc)$. Since clearly
$\eta_j*(\phi\tau_j)$ converges to $\phi$ in $L^p(\Nc)$,
Corollary~\ref{cor:2} implies that there is a unique 
\[
f\in \bigcap_{j\in \N} \Bc_{ K_j}^p(\Nc)=\Bc_{K}^p(\Nc)
\]
such that $f_0=\phi$.  
\end{proof}

\begin{proof}[Proof of Proposition~\ref{prop:9}.]
\textsc{Step I.} Let us first prove that
\[
\liminf_{k\to \infty}\norm{P(-i \partial_F)^k f}_{L^p(\Nc)}^{1/k}\Meg \max_{\Sp(f)} \abs{P}.
\]
Observe that, since $f$ has bounded spectrum, by Theorem~\ref{prop:7}, we  may reduce to the case $p=\infty$. Then,~\cite[Proposition 2.4]{AndersenDejeu} implies that
\[
\begin{split}
\liminf_{k\to \infty}\norm{P(-i \partial_F)^k f}_{L^\infty(\Nc)}^{1/k}&\Meg \sup_{\zeta\in E} \liminf_{k\to \infty}\norm{P(-i \partial_F)^k[f(\zeta,\,\cdot\,)]}_{L^\infty(F)}^{1/k}\Meg \sup_{\zeta\in E} \max_{\supp(\Fc_F(f(\zeta,\,\cdot\,)))} \abs{P}=\max_{\Sp(f)} \abs{P}.
\end{split}
\]

\textsc{Step II.} Now, let us prove that
\[
\limsup_{k\to \infty} \norm{P(-i \partial_F)^k f}_{L^p(\Nc)}^{1/k}\meg \max_{\Sp(f)} \abs{P}.
\]
Fix $N\in \N$, $N>m(\frac1p-\frac 1 2)$, $\eps>0$, and $\tau\in C^\infty_c(F')$ so that $\chi_{\Sp(f)}\meg \tau\meg \chi_{\Sp(f)+B_{F'}(0,\eps)} $. Define
\[
C_1\coloneqq \sup_{h=0,\dots, N}\norm{D^{(h)}\tau}_{L^\infty(F')},
\]
where $D^{(h)}\tau $ denotes the $h$-th differential of $\tau$.
Observe that Leibniz's rule and Faà di Bruno's formula show that
\[
\begin{split}
\abs{ D^{(h)} (P^k \tau)(\lambda)}&\meg C_1 \chi_{\Sp(f)+ \overline B_{F'}(0,\eps)}(\lambda) \sum_{j=0}^h \binom{h}{j} \abs{D^{(j)}P^k(\lambda)}\\
&\meg C_1\chi_{\Sp(f)+\overline B_{F'}(0,\eps)}(\lambda)\sum_{j=0}^h \sum_{\sum_{\ell=1}^j \ell \gamma_\ell=j, \abs{\gamma}\meg k} \frac{h! k! }{(h-j)!\gamma!(k-\abs{\gamma})!} \abs*{P^{k-\abs{\gamma}}(\lambda) \prod_{\ell=1}^j \left( \frac{D^{(\ell)}P(\lambda)}{\ell!} \right)^{\gamma_\ell}}
\end{split}
\]
for every $\lambda\in F'$, for every $k\in \N$, and for every $h=0,\dots, N$. Therefore, there is a constant $C_2>0$ such that
\[
\norm{P^k \tau}_{W^{N,2}(F')}\meg C_2 \max_{h=0,\min(N,k)} \max_{\Sp(f)+\overline B_{F'}(0,\eps)} \abs{P}^{k-h}
\]
for every $k\in \N$, where $W^{N,2}(F')$ denotes the usual Sobolev space of type $L^2$ and order $N$. Therefore,~\cite[Theorem 1.5.2]{Triebel} implies that there is a constant $C_3>0$ such that
\[
\norm{P(-i \partial)^k f(\zeta,\,\cdot\,)}_{L^p(F)}= \norm{[f(\zeta,\,\cdot\,)]*\Fc_F^{-1}(P^k \tau)}_{L^p(F)}\meg C_3 \norm{f(\zeta,\,\cdot\,)}_{L^p(F)} \max_{h=0,\min(N,k)} \max_{\Sp(f)+\overline B_{F'}(0,\eps)} \abs{P}^{k-h},
\]
for almost every $\zeta\in E$ and for every $k\in \N$,so that
\[
\norm{P(-i \partial)^k f}_{L^p(\Nc)}^{1/k}\meg (C_3\norm{f}_{L^p(\Nc)})^{1/k}  \max_{h=0,\min(N,k)} \max_{\Sp(f)+\overline B_{F'}(0,\eps)} \abs{P}^{1-h/k},
\]
for every $k\in \N$. Therefore,
\[
\limsup_{k\to \infty} \norm{P(-i \partial_F)^k f}_{L^p(\Nc)}^{1/k}\meg \max_{\Sp(f)+\overline B_{F'}(0,\eps)} \abs{P}.
\]
By the arbitrariness of $\eps$ and the continuity of $P$, the assertion follows.
\end{proof}

\medskip

 We now consider the case of derivatives in the complex tangential
 directions of $\Nc$, that is, the directions along $E$.
We denote by $\vect S ^k(E, \Phi_\lambda)$ the $k$-th symmetric
hilbertian power of $E$, endowed with the scalar product
$\Phi_\lambda=\langle \lambda_\C, \Phi\rangle$, for every $\lambda\in
\Lambda_+$ (cf.~\cite[Chapter V, \S\ 3, No.\ 3]{BourbakiTVS}). Then, 
\[
\norm{v_1\cdots v_k}_{\vect S ^k(E, \Phi_\lambda)}^2= \sum_{\sigma\in
  \Sf_k} \prod_{j=1}^k \Phi_\lambda(v_j,v_{\sigma(j)}) 
\] 
for every $v_1,\dots, v_k\in E$, where $\Sf_k$ denotes the set of
permutations of $\Set{1,\dots, k}$. 

\begin{prop}\label{comp-tang-dir:Prop}
    Let $K$ be a  compact convex subset of $\overline{\Lambda_+}$. 
    Then, for every $f\in \Bc_K^2(\Nc)$, for every $k\in \N$, and for every $v_1,\dots,v_k\in E$,
    \[
    \norm{Z_{v_1}\cdots Z_{v_k} f_0}_{L^2(\Nc)}\meg  2^{k/2}  \sup_{\lambda\in K\cap \Lambda_+} \norm{v_1\cdots v_k}_{\vect S ^k(E, \Phi_\lambda)} \norm{f_0}_{L^2(\Nc)}.
    \]
    
    Conversely, assume that $\Phi(E)$ generates $F$ as a vector space, and define 
    \[
    K'\coloneqq \Set{\lambda\in \overline{\Lambda_+}\colon \forall \zeta\in E\:\: \langle \lambda, \Phi(\zeta)\rangle\meg \abs{H_K(\Phi(\zeta))} },
    \]
    so that $K'$ is a compact convex subset of $F'$. In addition, take $p\in [1,\infty]$ and a CR $\phi\in L^p(\Nc)$ such that $Z_v^k \phi\in L^p(\Nc)$ for every $v\in \partial B_E(0,1)$ and for every $k\in \N$, and
    \[
    \limsup_{k\to \infty} \left( \frac{\norm{Z_v^k f}_{L^p(\Nc)}}{\sqrt{k!}}  \right)^{1/k}\meg \sqrt 2 \abs{H_K(\Phi(v))}
    \]
    for every $v\in \partial B_E(0,1)$. Then,  there is a unique $f\in \Bc^p_{K'}(\Nc)$ such that $f_0=\phi$.
\end{prop}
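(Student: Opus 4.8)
I would prove the two halves separately: the Bernstein-type inequality by a direct computation with the Plancherel formula of Proposition~\ref{prop:4}, and the converse by a regularisation argument modelled on Step~II of the proof of Theorem~\ref{Bern-ineq-thm}.

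\emph{The inequality.} Let $f\in\Bc^2_K(\Nc)$. By Theorem~\ref{cor:1}, $f_0\in\Oc_K(\Nc)\cap L^2(\Nc)$, so that $f_0$ is CR and, by Proposition~\ref{prop:5}, $\pi_\lambda(f_0)=0$ for almost every $\lambda\in\Lambda_+\setminus K$. Since $\dd\pi_\lambda(\overline{Z_w})$ is, in the Bargmann--Fock realisation $\sH_\lambda=\Hol(E)\cap L^2(\nu_\lambda)$, multiplication by the holomorphic linear form $2\langle\lambda,\Phi(\,\cdot\,,w)\rangle$ (Proposition~\ref{prop:4}), the identities $\overline{Z_w}f_0=0$ ($w\in E$) force $\pi_\lambda(f_0)=\pi_\lambda(f_0)P_{\lambda,0}$, i.e.\ $\pi_\lambda(f_0)=\langle\,\cdot\,\vert e_{\lambda,0}\rangle\,\xi_\lambda$ with $\xi_\lambda\coloneqq\pi_\lambda(f_0)e_{\lambda,0}$. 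Because $Z_{v_1},\dots,Z_{v_k}$ are left-invariant and $\dd\pi_\lambda(Z_v)=-\partial_v$, repeatedly using the intertwining relation between $\pi_\lambda$ and $\dd\pi_\lambda$ gives
\[
\pi_\lambda(Z_{v_1}\cdots Z_{v_k}f_0)=\big\langle\,\cdot\,\big\vert\,(\partial_{v_1})^*\cdots(\partial_{v_k})^*e_{\lambda,0}\big\rangle\,\xi_\lambda ,
\]
where each $(\partial_v)^*$ is the creation operator on $\sH_\lambda$, namely multiplication by $2\langle\lambda,\Phi(\,\cdot\,,v)\rangle$. The heart of the argument is the identity
\[
\norm{(\partial_{v_1})^*\cdots(\partial_{v_k})^*e_{\lambda,0}}_{\sH_\lambda}^2=2^k\,\norm{v_1\cdots v_k}_{\vect S^k(E,\Phi_\lambda)}^2 ,
\]
which I would establish either by differentiating $k$ times, at the origin and in the directions $v_1,\dots,v_k$, the reproducing kernel of $\sH_\lambda$ (a multiple of $(\omega,\omega')\mapsto\ee^{2\langle\lambda_\C,\Phi(\omega,\omega')\rangle}$), i.e.\ by a coherent-states computation, or equivalently by Wick's formula for the centred complex Gaussian $\nu_\lambda$, which turns the left-hand side into a permanent of the matrix $\big(\Phi_\lambda(v_i,v_j)\big)_{i,j}$. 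Since $\norm{e_{\lambda,0}}=1$, the operator above has Hilbert--Schmidt norm $2^{k/2}\norm{v_1\cdots v_k}_{\vect S^k(E,\Phi_\lambda)}\norm{\pi_\lambda(f_0)}_{\Lin^2(\sH_\lambda)}$; plugging this into the Plancherel identity of Proposition~\ref{prop:4}(4) (whose surjectivity half also shows $Z_{v_1}\cdots Z_{v_k}f_0\in L^2(\Nc)$ once the integral on the operator side is finite), using that the integrand is supported in $K\cap\Lambda_+$, and bounding $\norm{v_1\cdots v_k}_{\vect S^k(E,\Phi_\lambda)}$ by its supremum over $K\cap\Lambda_+$ yields
\[
\norm{Z_{v_1}\cdots Z_{v_k}f_0}_{L^2(\Nc)}^2\meg 2^k\bigg(\sup_{\lambda\in K\cap\Lambda_+}\norm{v_1\cdots v_k}_{\vect S^k(E,\Phi_\lambda)}^2\bigg)\norm{f_0}_{L^2(\Nc)}^2 ,
\]
which is the assertion.

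\emph{The converse.} Given a CR $\phi\in L^p(\Nc)$ with the stated control on $Z_v^k\phi$, I would regularise exactly as in Step~II of the proof of Theorem~\ref{Bern-ineq-thm}: fix $\tau\in\widetilde\Sc_{\overline{\Lambda_+}\cap\overline B_{F'}(0,1)}$ with $\tau(0,0)=1$ and $0\le\eta\in\Sc(\Nc)$ with $\norm{\eta}_{L^1(\Nc)}=1$, set $\tau_j\coloneqq\tau(2^{-j}\,\cdot\,)$, $\eta_j\coloneqq2^{Qj}\eta(2^j\,\cdot\,)$, and $\phi^{(j)}\coloneqq\eta_j*(\phi\tau_j)$. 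Then $\phi^{(j)}\in\Sc(\Nc)$ is CR (both $\overline{Z_w}$ and left convolution preserve the CR condition) and $\phi^{(j)}\to\phi$ in $L^p(\Nc)$. Since $Z_v$ is a derivation, homogeneous of degree $1$ under the dilations, Leibniz's rule together with Young's and H\"older's inequalities, the hypothesis on $\phi$, and the inequality of the first part applied to the fixed band-limited function $\tau$ (after a Nikolskii-type inequality to pass from $L^2$ to the auxiliary Lebesgue exponent, the band of $\tau$ being fixed) yield, after the usual estimate of the binomial sum, that for every $v\in\partial B_E(0,1)$ the quantity $\limsup_{k\to\infty}\big(\norm{Z_v^k\phi^{(j)}}_{L^2(\Nc)}/\sqrt{k!}\big)^{1/k}$ is bounded by $\sqrt2\,\abs{H_K(\Phi(v))}+\eps+2^{-j}$. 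Running the computation of the first part backwards for the Schwartz, band-limited $\phi^{(j)}$ (where $\pi_\lambda(\phi^{(j)})=\langle\,\cdot\,\vert e_{\lambda,0}\rangle\,\xi^{(j)}_\lambda$) identifies this limsup with a spectral quantity of $\phi^{(j)}$; hence $\pi_\lambda(\phi^{(j)})=0$ for almost every $\lambda$ outside a compact convex subset $K'_j$ of $\overline{\Lambda_+}$, and the shape of the resulting conditions — together with the hypothesis that $\Phi(E)$ spans $F$, which is what makes these sets bounded — shows that $K'_j\downarrow K'$ as $j\to\infty$. By Theorem~\ref{cor:1} (or Proposition~\ref{prop:8}) there is a unique $f^{(j)}\in\Bc^2_{K'_j}(\Nc)\subseteq\Bc^p_{K'_j}(\Nc)$ with $f^{(j)}_0=\phi^{(j)}$; since $\phi^{(j)}\to\phi$ in $L^p(\Nc)$ and $\bigcap_j\Bc^p_{K'_j}(\Nc)=\Bc^p_{K'}(\Nc)$, Corollary~\ref{cor:2} produces the desired $f\in\Bc^p_{K'}(\Nc)$ with $f_0=\phi$, and uniqueness of $f$ given $f_0$ is Proposition~\ref{prop:1}(2). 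The case $p=\infty$ is handled along the same lines, replacing the $L^p$-convergence of $\phi^{(j)}$ by the appropriate weak-$*$ compactness argument.

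\emph{Main obstacle.} The delicate point is the exact constant in the first part, i.e.\ the value $2^k\norm{v_1\cdots v_k}_{\vect S^k(E,\Phi_\lambda)}^2$ of the squared $\sH_\lambda$-norm of the iterated creation operator applied to the vacuum $e_{\lambda,0}$: obtaining it requires careful bookkeeping in the Bargmann--Fock model, since the normalisation of $e_{\lambda,0}$, the weight $\ee^{-2\langle\lambda,\Phi(\omega)\rangle}$ defining $\nu_\lambda$, and the scalar product $\Phi_\lambda$ chosen on the symmetric power all feed into the final constant. Once this is in place the inequality follows by soft arguments; in the converse, the only other point needing care is that the regularisation does not enlarge the spectrum in the limit, i.e.\ that the sets $K'_j$ decrease precisely to $K'$.
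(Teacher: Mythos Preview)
Your proposal is correct and follows essentially the same strategy as the paper: Plancherel plus the explicit Bargmann--Fock computation of $\norm{\dd\pi_\lambda(\overline{Z_{v_1}})\cdots\dd\pi_\lambda(\overline{Z_{v_k}})e_{\lambda,0}}_{\sH_\lambda}^2$ for the inequality (the paper evaluates via $\langle\psi\vert e_{\lambda,0}\rangle=c_\lambda\psi(0)$ rather than Wick's formula or the reproducing kernel, but these are equivalent), and the regularisation scheme of Theorem~\ref{Bern-ineq-thm} to reduce the converse to $p=2$ followed by the spectral reading of \textsc{Step I}. The only wrinkle worth flagging is that your Nikolskii step for $\norm{Z_v^{k-h}\tau_j}_{L^s}$ cannot literally quote Theorem~\ref{prop:7}, since $Z_v^{k-h}\tau_j$ is no longer CR; however, the proof of that theorem uses only band-limitedness in the $F$-variable, which $Z_v$ preserves, so the argument goes through (the paper's ``arguing by approximation as in the proof of Theorem~\ref{Bern-ineq-thm}'' glosses over exactly this point).
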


In particular, if $f\in \Bc^2_K(\Nc)$, then
\[
\norm{Z_{v}^k f_0}_{L^2(\Nc)}\meg  \sqrt{2^k  k! \abs{H_{K}(\Phi(v))}^k} \norm{f_0}_{L^2(\Nc)}
\]
for every $v\in E$ and for every $k\in \N$.

With a similar proof, one may also consider more general derivatives of $f$, where both operators of the form $Z_v$ and $\overline{Z_{v'}}$ appear. Nonetheless, it does not seem that a general, relatively simple, formula may arise.

We do not know if an analogue of the first assertion holds for $p\neq 2$.
In addition, we observe explicitly that  $Z_{v_1}\cdots Z_{v_k} f$ is not holomorphic, unless it vanishes identically or $k=0$, so that it is not possible to argue by induction in this case.

\begin{proof}
\textsc{Step I.}  Take  $v_1,\dots, v_k\in E$, and $f\in \Bc_K^2(\Nc)$, and observe that
 \[
 \begin{split}
 \pi_\lambda(Z_{v_1}\cdots Z_{v_k} f_0)&= \pi_\lambda(f_0) P_{\lambda,0} \dd\pi_\lambda(Z_{v_k})\cdots \dd \pi_\lambda(Z_{v_1})\\
  &= \langle \dd\pi_\lambda(Z_{v_k})\cdots \dd \pi_\lambda(Z_{v_1})\,\cdot\, \big\vert e_{\lambda,0}\rangle \pi_\lambda(f_0)e_{\lambda,0}\\
  &=(-1)^k\langle \,\cdot\, \big\vert  \dd\pi_\lambda(\overline{Z_{v_1}})\cdots \dd \pi_\lambda(\overline{Z_{v_k}})e_{\lambda,0}\rangle \pi_\lambda(f_0)e_{\lambda,0}
 \end{split}
 \]
 on $C^\infty(\sH_\lambda)$, for almost every $\lambda\in \Lambda_+$. Therefore,
 \begin{align}
 \norm{ \pi_\lambda(Z_{v_1}\cdots Z_{v_k}
   f_0)}_{\Lin^2(\sH_\lambda)}^2&=
 \norm{\pi_\lambda(f_0)e_{\lambda,0}}_{\sH_\lambda}^2
 \norm{\dd\pi_\lambda(\overline{Z_{v_1}})\cdots \dd \pi_\lambda(\overline{Z_{v_k}}) e_{\lambda,0}}^2_{\sH_\lambda} \notag\\
  &=\norm{\pi_\lambda(f_0)}_{\Lin^2(\sH_\lambda)}^2\norm{\dd\pi_\lambda(\overline{Z_{v_1}})\cdots
    \dd \pi_\lambda(\overline{Z_{v_k}}) e_{\lambda,0}}^2_{\sH_\lambda} \label{eq-with-label:1}
\end{align}
 for almost every $\lambda\in \Lambda_+$. Now, take $\lambda\in \Lambda_+$, and observe that 
 \[
 \begin{split}
  \norm{\dd\pi_\lambda(\overline{Z_{v_1}})\cdots \dd \pi_\lambda(\overline{Z_{v_k}}) e_{\lambda,0}}^2_{\sH_\lambda}&= (-1)^k\langle \dd\pi_\lambda(Z_{v_k})\cdots\dd \pi_\lambda(Z_{v_1})\dd\pi_\lambda(\overline{Z_{v_1}})\cdots \dd \pi_\lambda(\overline{Z_{v_k}}) e_{\lambda,0}\vert e_{\lambda,0}\rangle\\
&=2^k \partial_{E,v_k}\cdots \partial_{E,v_1}(\Phi_\lambda(\,\cdot\,,v_1)\cdots \Phi_\lambda(\,\cdot\,,v_k))(0)\\
&=2^k\sum_{\sigma\in \Sf_k} \prod_{j=1}^k \Phi_\lambda(v_j,v_{\sigma (j)})\\
&=2^k \norm{v_1\cdots v_k}_{\vect S^k(E,\Phi_\lambda)}^2,
 \end{split}
 \]
 since 
 \[
 \langle \psi \vert e_{\lambda,0}\rangle= \sqrt{\frac{\pi^n}{2^n \abs{\Pfaff(\lambda)}}} \psi(0)
 \]
 and $e_{\lambda,0}=\sqrt{\frac{2^n \abs{\Pfaff(\lambda)}}{\pi^n}}\chi_E$  for every $\psi\in \sH_\lambda$.
 
 Therefore, Proposition~\ref{prop:4} shows that
 \[
 \begin{split}
  \norm{Z_{v_1}\cdots Z_{v_k} f_0}_{L^2(\Nc)}^2&=\frac{2^{n-m}}{\pi^{n+m}} \int_{K} \norm{\pi_\lambda(Z_{v_1}\cdots Z_{v_k} f_0)}^2_{\Lin^2(\sH_\lambda)}\abs{\Pfaff(\lambda)}\,\dd \lambda\\
&=\frac{2^{n-m}}{\pi^{n+m}} 2^k \int_{K}  \norm{v_1\cdots v_k}_{\vect S^k(E,\Phi_\lambda)}^2 \norm{\pi_\lambda( f_0)}^2_{\Lin^2(\sH_\lambda)}\abs{\Pfaff(\lambda)}\,\dd \lambda\\
&\meg  2^k \sup_{\lambda\in K\cap \Lambda_+}  \norm{v_1\cdots v_k}_{\vect S^k(E,\Phi_\lambda)}^2 \norm{f_0}_{L^2(\Nc)}^2,
 \end{split}
 \]
 whence the result.
 
 \textsc{Step II.} Now, assume that $\Phi(E)$ generates $F$ as a vector space, and observe that $K'$ is a compact convex subset of $\overline{\Lambda_+}$, since by definition its orthogonal projections  into the lines $(\R\Phi(v))^{\circ \perp}$, $v\in E$, are bounded.
 Take $p\in [1,\infty]$ and a CR $\phi\in L^p(\Nc)$ such that $Z_v^k \phi\in L^p(\Nc)$ for every $v\in \partial B_E(0,1)$ and for every $k\in \N$, and
 \[
 \limsup_{k\to \infty} \left( \frac{\norm{Z_v^k f}_{L^p(\Nc)}}{\sqrt{k!}}  \right)^{1/k}\meg \sqrt 2 \abs{H_K(\Phi(v))}
 \]
 for every $v\in \partial B_E(0,1)$. 
 Arguing by approximation as in the proof of Theorem~\ref{Bern-ineq-thm}, we may reduce to the case $p=2$, in which case, by~\textsc{step I}, 
 \[
 \limsup_{k\to \infty} \left(  \int_{\Lambda_+} \langle \lambda, \Phi(v)\rangle ^k\norm{\pi_\lambda( \phi)}^2_{\Lin^2(\sH_\lambda)}\abs{\Pfaff(\lambda)}\,\dd \lambda\right)^{ 1/(2k)} \meg \abs{H_K(\Phi(v))}
 \]
 for every $v\in \partial B_E(0,1)$, hence for every $v\in E$. Then, $\pi_\lambda(\phi)=0$ for almost every $\lambda\in \Lambda_+\setminus K'$, so that $\phi\in \Oc_{K'}(\Nc)\cap L^2(\Nc)$. The assertion follows from  Theorem~\ref{cor:1}. 
\end{proof}

\medskip

\section{Boundedness of the orthogonal projections}\label{sec:5}

In this section we study the boundedness of the orthogonal projection
$P\colon L^2(\Nc)\to \Bc_K^2(\Nc)$ on the spaces $L^p(\Nc)$, that is, for
which $p\in[1,\infty)$ the restriction of $P$ to $L^p(\Nc) \cap
L^2(\Nc)$ extends to a bounded operator in $L^p(\Nc)$.  We begin by
describing the integral kernel of $P$.

\begin{prop}\label{prop:12}
Let $K$ be a  compact convex subset of
$\overline{\Lambda_+}$. Then, the reproducing kernel of
$\Bc^{2}_K(\Nc)$ is given by 
\[
\Kc((\zeta,z),(\zeta',z'))\coloneqq \frac{2^{n-m} }{\pi^{n+m}} \int_K \ee^{\la \lambda,  i(z-\overline{z'})+2\Phi(\zeta,\zeta') \ra }\abs{\Pfaff(\lambda)}\,\dd \lambda=\Lc(\chi_K\abs{\Pfaff(\,\cdot\,)})\left(\frac{z-\overline{z'}}{i}-2\Phi(\zeta,\zeta')  \right) 
\]
for every $(\zeta,z),(\zeta',z')\in E\times F_\C$. 
\end{prop}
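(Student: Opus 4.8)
The plan is to exhibit the claimed $\Kc$ as the reproducing kernel directly. Since $\Bc^2_K(\Nc)$ carries the inner product $\langle f,g\rangle_{\Bc^2_K}=\langle f_0,g_0\rangle_{L^2(\Nc)}$ and, by Corollary~\ref{cor:2}, embeds continuously into $\Hol(E\times F_\C)$, it is a reproducing kernel Hilbert space; as such a kernel is unique, it suffices to show that for each fixed $(\zeta',z')\in E\times F_\C$ the function $\Kc_{(\zeta',z')}:=\Kc(\,\cdot\,,(\zeta',z'))$ lies in $\Bc^2_K(\Nc)$ and satisfies $\langle f,\Kc_{(\zeta',z')}\rangle_{\Bc^2_K}=f(\zeta',z')$ for every $f\in\Bc^2_K(\Nc)$. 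I write $c_\lambda:=2^n\abs{\Pfaff(\lambda)}/\pi^n$, so that $e_{\lambda,0}=\sqrt{c_\lambda}\,\chi_E$ and, for $\lambda\in\Lambda_+$, the vector $\pi_\lambda(\zeta',0)e_{\lambda,0}\in\sH_\lambda$ is a scalar multiple of the reproducing kernel of the Bargmann--Fock-type space $\sH_\lambda$ at $\zeta'$; I also note that $\abs{\Pfaff}$ vanishes on $\partial\Lambda_+$, so that the integral over $K$ is in effect an integral over $K\cap\Lambda_+$.

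\emph{Step 1: $\Kc_{(\zeta',z')}\in\Bc^2_K(\Nc)$.} The integral defining $\Kc_{(\zeta',z')}$ converges locally uniformly and is holomorphic in $(\zeta,z)$ (the integrands are entire and $K$ is compact); hence its restriction $(\Kc_{(\zeta',z')})_0$ to $\cM$ is a CR function. Computing the Fourier transform in the $F$-variable of each slice, one finds that $\abs{\Fc_F[(\Kc_{(\zeta',z')})_0(\zeta,\,\cdot\,)](\lambda)}$ equals, up to a constant, $\ee^{-\norm{\zeta-\zeta'}_\lambda^2}\abs{\Pfaff(\lambda)}\chi_K(\lambda)$, where $\norm{\,\cdot\,}_\lambda$ is the norm of the positive hermitian form $\langle\lambda,\Phi\rangle$; here one uses that completing the square in $\langle\lambda,\Phi\rangle$ gives $-\langle\lambda,\Phi(\zeta)\rangle+2\Re\langle\lambda_\C,\Phi(\zeta,\zeta')\rangle-\langle\lambda,\Phi(\zeta')\rangle=-\norm{\zeta-\zeta'}_\lambda^2$. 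This shows at once that the spectrum of $(\Kc_{(\zeta',z')})_0$ is contained in $K$ and, via Plancherel in $F$ and Fubini (together with $\int_E\ee^{-2\norm{\eta}_\lambda^2}\,\dd\eta=\pi^n/(2^n\abs{\Pfaff(\lambda)})$), that $(\Kc_{(\zeta',z')})_0\in L^2(\Nc)$. Thus $(\Kc_{(\zeta',z')})_0\in\Oc_K(\Nc)\cap L^2(\Nc)$, and Theorem~\ref{cor:1} gives $\Kc_{(\zeta',z')}\in\Bc^2_K(\Nc)$.

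\emph{Step 2: the reproducing property.} Let $f\in\Bc^2_K(\Nc)$. Since $f_0$ is CR with spectrum in $K\subseteq\overline{\Lambda_+}$, Propositions~\ref{prop:4} and~\ref{prop:5} give $\pi_\lambda(f_0)=\langle\,\cdot\,\vert e_{\lambda,0}\rangle\psi^f_\lambda$ with $\psi^f_\lambda:=\pi_\lambda(f_0)e_{\lambda,0}\in\sH_\lambda$ supported in $K\cap\Lambda_+$, and $\langle f_0,g_0\rangle_{L^2(\Nc)}=\tfrac{2^{n-m}}{\pi^{n+m}}\int_K\langle\psi^f_\lambda,\psi^g_\lambda\rangle_{\sH_\lambda}\abs{\Pfaff(\lambda)}\,\dd\lambda$ for any two such $f,g$. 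Substituting the explicit realisation of $\pi_\lambda$ on $\sH_\lambda$ (recalled before Proposition~\ref{prop:4}) into $\pi_\lambda\big((\Kc_{(\zeta',z')})_0\big)e_{\lambda,0}$, the integration over $F$ collapses to a Dirac mass (pinning $\lambda$) and the remaining Gaussian integral over $E$ is evaluated by completing the square, yielding
\[
\pi_\lambda\big((\Kc_{(\zeta',z')})_0\big)e_{\lambda,0}=\ee^{\langle\lambda_\C,-i\overline{z'}\rangle}\,\ee^{\langle\lambda,\Phi(\zeta')\rangle}\,\chi_K(\lambda)\,\pi_\lambda(\zeta',0)e_{\lambda,0},
\]
a scalar multiple of the reproducing kernel of $\sH_\lambda$ at $\zeta'$. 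Hence, by the displayed Plancherel formula and the reproducing property of $\sH_\lambda$,
\[
\langle f,\Kc_{(\zeta',z')}\rangle_{\Bc^2_K}=\frac{2^{n-m}}{\pi^{n+m}}\int_K\frac{\ee^{\langle\lambda_\C,iz'\rangle}}{\sqrt{c_\lambda}}\,\psi^f_\lambda(\zeta')\,\abs{\Pfaff(\lambda)}\,\dd\lambda.
\]
It remains to see that the last integral equals $f(\zeta',z')$. It is holomorphic in $(\zeta',z')$, as is $f$, so one may assume $(\zeta',z')\in\cM$, i.e.\ $z'=x'+i\Phi(\zeta')$; then the integral is a function $G$ on $\Nc$, and the same Dirac-mass/Gaussian computation as above gives $\pi_\mu(G)e_{\mu,0}=\psi^f_\mu$ for a.e.\ $\mu$, whence $\pi_\mu(G)=\pi_\mu(f_0)$ (both are of the form $\langle\,\cdot\,\vert e_{\mu,0}\rangle(\,\cdot\,)$, being attached to CR functions), and so $G=f_0$ by injectivity of the Plancherel transform and continuity. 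Finally, rewriting $\langle\lambda_\C,i(z-\overline{z'})+2\Phi(\zeta,\zeta')\rangle=-\langle\lambda_\C,\tfrac{z-\overline{z'}}{i}-2\Phi(\zeta,\zeta')\rangle$ recognises the integral as $\Lc(\chi_K\abs{\Pfaff(\,\cdot\,)})$ evaluated at $\tfrac{z-\overline{z'}}{i}-2\Phi(\zeta,\zeta')$.

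The main obstacle is the bookkeeping bridging the two faithful descriptions of elements of $\Bc^2_K(\Nc)$: as holomorphic functions on $E\times F_\C$ (Proposition~\ref{prop:1}) and as Plancherel data $(\psi^f_\lambda)_\lambda$ in $L^2\big(K,\abs{\Pfaff(\lambda)}\,\dd\lambda;\sH_\lambda\big)$ (Proposition~\ref{prop:4}). Concretely, identifying the coherent state $\pi_\lambda\big((\Kc_{(\zeta',z')})_0\big)e_{\lambda,0}$ and establishing the reconstruction formula for $f$ are the same Fubini-plus-Gaussian-integral manipulation, and the Gaussian decay obtained in Step~1 is precisely what legitimates those interchanges of integration and the $L^2$-convergence throughout; the remaining estimates and the bookkeeping with the normalisation constant $c_\lambda$ are routine.
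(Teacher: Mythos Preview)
Your proof is correct and follows essentially the same approach as the paper: both identify the Plancherel transform of $(\Kc_{(\zeta',z')})_0$ and deduce the reproducing property from the isometry of Proposition~\ref{prop:4}. The paper records this as the operator identity $\pi_\lambda\big((\Kc_{(\zeta',z')})_0\big)=\chi_K(\lambda)\,\ee^{-\langle\lambda,\rho(\zeta',z')\rangle}\pi_\lambda(\zeta',\Re z')$ and delegates the remaining verification to~\cite[Corollary 1.42]{CalziPeloso}, whereas you unpack that verification explicitly via the coherent-state/Fock-kernel description of $\sH_\lambda$; your displayed formula for $\pi_\lambda\big((\Kc_{(\zeta',z')})_0\big)e_{\lambda,0}$ is precisely the paper's identity applied to $e_{\lambda,0}$.
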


Observe that the self-adjoint projector $P$ of $L^2(\Nc)$ onto $L^2(\Nc)\cap \Oc_K(\Nc)$ is a right convolution operator with (convolution) kernel $\Kc(\,\cdot\,,(0,0))_0 $. Since this latter function is \emph{not a bounded measure}  (for $\Fc_F \Kc((\zeta,\,\cdot\,),(0,0))_0$ is \emph{not} continuous for any $\zeta\in E$), $P$ cannot induce endomorphisms of $L^1(\Nc)$ and $L^\infty(\Nc)$ (cf., e.g.,~\cite[Theorem 35.5]{HR} for $L^1(\Nc)$ and argue by transposition for $L^\infty(\Nc)$). 

\begin{proof}
It suffices to observe that, necessarily, $\Kc(\,\cdot\,,(\zeta',z'))_0$ is a CR element of $L^2(\Nc)$ and
\[
\pi_\lambda(\Kc(\,\cdot\,,(\zeta',z'))_0)=\chi_K(\lambda) \pi_\lambda(\zeta',\Re z') \ee^{-\la \lambda, \rho(\zeta',z')\ra}
\]
for every $(\zeta',z')\in E\times F_\C$ and for almost every $\lambda\in \Lambda_+$, and to argue as in the proof of~\cite[Corollary 1.42]{CalziPeloso}. 
\end{proof}

Notice that, in general, $\chi_K$ is \emph{not} a Fourier multiplier of $L^p$,
$p\neq 2$, so that, in general, one cannot expect the self-adjoint
projector of $L^2(\Nc)$ onto $\Bc^2_K(\Nc)$ to induce a continuous
linear projector of $L^p(\Nc)$ onto $\Bc^p_K(\Nc)$ for any $p\neq 2$. 
For the same reason, it is not clear if the reproducing kernel of
$\Bc^2_K(\Nc)$ reproduces the elements of $B^p_K(\Nc)$, $p> 2$ (or
even if it makes sense to ask for such a reproducing property). 

\begin{prop}\label{prop:2}
Let $K$ be a  compact convex subset of $\Lambda_+$, and take
$p\in  (1,\infty)$. Denote by $P$ the self-adjoint projector of
$L^2(\Nc)$ onto $L^2(\Nc)\cap \Oc_K(\Nc)$. Then, $P$ induces an
endomorphism of $L^p(\Nc)$ if and only if $\Fc_F^{-1}(\chi_K)$ is a
convolutor of $L^p(F)$.  
\end{prop}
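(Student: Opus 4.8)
The plan is to identify the projector $P$ with a right-convolution operator on $\Nc$ and then transfer the $L^p$-boundedness question to a statement about Fourier multipliers on the abelian group $F$. The key observation — already recorded in the excerpt right after Proposition~\ref{prop:12} — is that $P$ is the right-convolution operator with kernel $\Kc(\,\cdot\,,(0,0))_0$, and that by Proposition~\ref{prop:12} this kernel is, up to the constant $\frac{2^{n-m}}{\pi^{n+m}}$, the function $(\zeta,x)\mapsto \Lc(\chi_K\abs{\Pfaff(\,\cdot\,)})\bigl(x-i\Phi(\zeta)\bigr)$; equivalently, on the Fourier side over $\Lambda_+$ one has $\pi_\lambda(Pg)=\chi_K(\lambda)\,P_{\lambda,0}\,\pi_\lambda(g)\,P_{\lambda,0}$. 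So $P$ acts on each fibre $\sH_\lambda$ as the rank-one projection onto $\C e_{\lambda,0}$, multiplied by $\chi_K(\lambda)$.

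First I would make precise the reduction to $F$. Since every element of $L^2(\Nc)\cap\Oc_K(\Nc)$ is CR and has fibrewise Fourier transform supported in $K\subseteq\Lambda_+$, and since for a CR function the dependence on $\zeta$ is completely determined by the fibre datum together with the weight $\ee^{-\langle\lambda,\Phi(\zeta)\rangle}$ (Proposition~\ref{prop:8}), the operator $P$ "factors through" the partial Fourier transform $\Fc_F$ in the central variable: writing $g(\zeta,\,\cdot\,)$ and taking $\Fc_F$, the effect of $P$ is to multiply $\Fc_F[g(\zeta,\,\cdot\,)](\lambda)$ by $\chi_K(\lambda)$ and then re-assemble a CR function. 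Thus, modulo the CR-compatible weight, $P$ is intertwined with the Fourier multiplier operator $T_{\chi_K}$ on $F$ given by $\Fc_F^{-1}(\chi_K)\ast\,\cdot\,$. The cleanest way to organize this is: fix the partial convolution structure, observe that $g\mapsto Pg$ restricted to the fibre over $\zeta$ is conjugation by the multiplication-by-$\ee^{\mp\langle\lambda,\Phi(\zeta)\rangle}$ of the operator $T_{\chi_K}$, and that these conjugating weights are harmless for $L^p$ norms because they are absorbed by the CR constraint (the factor $\ee^{-\langle\lambda,\Phi(\zeta)\rangle}$ appearing in $\Fc_F[f(\zeta,\,\cdot\,)]$ is exactly the one produced by holomorphy). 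Concretely, if $\Fc_F^{-1}(\chi_K)$ is a convolutor of $L^p(F)$, then applying it in the central variable of $g\in L^p(\Nc)$ gives a bounded operator on $L^p(\Nc)$ by Minkowski/Fubini, and one checks it coincides with $P$ on $L^p\cap L^2$; conversely, restricting $P$ to functions of product form $g(\zeta,x)=a(\zeta)\,b(x)$ with $a$ concentrated near a point of $E$ shows that boundedness of $P$ on $L^p(\Nc)$ forces boundedness of $T_{\chi_K}$ on $L^p(F)$.

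The main obstacle will be the rigorous handling of the CR weight $\ee^{-\langle\lambda,\Phi(\zeta)\rangle}$ and of the fact that $P$ is not exactly a convolution in the central variable alone but a genuine group convolution on $\Nc$, so the reduction to $F$ requires keeping careful track of how the group law couples $\zeta$ and $x$. The right bookkeeping device is the formula $\pi_\lambda(Pg)=\chi_K(\lambda)\,P_{\lambda,0}\,\pi_\lambda(g)\,P_{\lambda,0}$ together with the explicit action of $\pi_\lambda$, from which one extracts that for CR $g$ one has, after partial Fourier transform, $\Fc_F[(Pg)(\zeta,\,\cdot\,)](\lambda)=\chi_K(\lambda)\,\Fc_F[g(\zeta,\,\cdot\,)](\lambda)$ \emph{pointwise in} $\zeta$ — i.e.\ the $\zeta$-variable is a pure parameter and $P$ genuinely is $\mathrm{Id}_E\otimes T_{\chi_K}$ in the appropriate coordinates. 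Once this identity is in hand, both implications are routine: one direction is Fubini plus the definition of a convolutor of $L^p(F)$, and the other is a testing argument against tensor products $a\otimes b$ letting $a$ approximate a Dirac mass on $E$. I would also note that, since $P$ commutes with left translations by $\Nc$, it suffices to test on a single nonzero function and use translation-invariance to strip off the $\zeta$-dependence, which is the cleanest way to avoid approximate-identity technicalities. I expect the verification that the testing argument really isolates $T_{\chi_K}$ — rather than $T_{\chi_K}$ composed with some extra factor coming from the coupling — to be the one place where genuine care is needed.
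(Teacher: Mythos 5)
Your reduction hinges on the identification ``$P$ genuinely is $\mathrm{Id}_E\otimes T_{\chi_K}$ in the appropriate coordinates'', and that identification is false. The operator $\mathrm{Id}_E\otimes T_{\chi_K}$ is the orthogonal projection onto $\Set{f\in L^2(\Nc)\colon \supp \Fc_F[f(\zeta,\,\cdot\,)]\subseteq K \text{ for a.e.\ }\zeta}$, which \emph{strictly} contains $L^2(\Nc)\cap\Oc_K(\Nc)$: a tensor product $a\otimes b$ with $\supp(\Fc_F b)\subseteq K$ lies in the former space but is CR only for very special $a$, so it is fixed by the central multiplier while $P(a\otimes b)\neq a\otimes b$. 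At the representation level, $\pi_\lambda(Pf)=\chi_K(\lambda)\,\pi_\lambda(f)\,P_{\lambda,0}$, and the one-sided factor $P_{\lambda,0}$ (the CR projection) is \emph{not} implemented by any convolution in the central variable alone; your formula $\Fc_F[(Pg)(\zeta,\,\cdot\,)]=\chi_K\,\Fc_F[g(\zeta,\,\cdot\,)]$ holds only for $g$ already CR, and the proposition concerns $P$ on all of $L^p(\Nc)$. This breaks both halves of your argument. In the ``if'' direction the operator you construct simply is not $P$; the fix (which is what the paper does) is to factor $Pf=(f*\phi)*\bigl(\delta_0\otimes\Fc_F^{-1}(\chi_K)\bigr)$ with $\phi\in\widetilde\Sc_{\overline{\Lambda_+}}(\Nc)$, $\Fc_\Nc\phi\equiv 1$ on $K$: the convolution with $\phi\in L^1(\Nc)$ supplies the missing CR projection (up to a harmless smooth cutoff) and is trivially $L^p$-bounded, after which Fubini applies to the central factor.

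In the ``only if'' direction the gap is more serious, because testing $P$ on $a\otimes b$ no longer isolates $T_{\chi_K}b$: the output $P(a\otimes b)$ is a CR function whose $\zeta$-profile is forced into the weight $\ee^{-\langle\lambda,\Phi(\zeta)\rangle}$, and your claim that these weights are ``harmless'' and ``absorbed by the CR constraint'' is exactly the point that needs proof. The paper avoids the issue by feeding $P$ functions on which it already acts as the central multiplier, namely CR extensions $\sE_\phi f$ of $f*\phi$ for $f\in\Sc(F)$, and then recovering $f*\phi$ pointwise from $\sE_\phi f$ via Cauchy's estimates on the holomorphic extension. Two ingredients there have no counterpart in your sketch: the bound $H_{K'}(\Phi(\zeta))\meg -c'\abs{\zeta}^2$, which makes $\sE_\phi f$ belong to $L^p(\Nc)$ and is precisely where the hypothesis $K\subseteq\Lambda_+$ (rather than $\overline{\Lambda_+}$) is used; and the Jensen/Cauchy step that converts an $L^p(\Nc)$ bound on $\sE_\phi f$ back into an $L^p(F)$ bound on $f*\phi$. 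Finally, the suggestion that left-translation invariance lets you ``strip off the $\zeta$-dependence'' does not help: left translation by $(\zeta_0,0)$ shifts the central variable by the $\zeta$-dependent amount $2\Im\Phi(\zeta_0,\zeta)$, so it does not decouple the two variables.
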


Notice that we do not require that $K$ be a  compact convex
subset of $\overline{\Lambda_+}$, but rather of $\Lambda_+$.

\begin{proof}
\textsc{step I.} Assume first that $P$ induces an endomorphism of
$L^p(\Nc)$, and let us prove that the operator $P'\colon f \mapsto
f*\Fc_F^{-1}(\chi_K)$ induces an endomorphism of $L^p(F)$. Observe
first that, if $\phi\in \Fc_F^{-1}(C^\infty_c(\Lambda_+))$ and
$\Fc_F \phi=1$ on $K$, then $P' f= P'(f*\phi)$ for every $f\in
\Sc(F)$.  
Now, denote by 
\[
\sE\colon \Oc_{K'}(F)\ni f\mapsto \frac{1}{(2\pi)^m}\int_{K'} \Fc_F f(\lambda) \ee^{i\langle \lambda_\C,\,\cdot\,\rangle}\,\dd \lambda \in \Hol(F_\C)
\]
the canonical extension mapping, and observe that Theorem~\ref{prop:6} implies that 
\[
\norm{[\sE (f*\phi)]_{h}}_{L^q(F)}\meg \ee^{H_{K'}(h)} \norm{f*\phi}_{L^q(F)}
\]
for every $f\in \Sc(F)$, for every $h\in F$, and for every $q\in (0,\infty]$, where $K'$ is the (closed) convex envelope of  $\Supp{\Fc_F \phi}$. Then, define 
\[
\sE_\phi f\colon E\times F_\C\ni (\zeta,z)\mapsto \sE(f*\phi)(z)\in \C
\]
for every $f\in \Sc(F)$, and observe that
\[
\norm{(\sE_\phi f)_h}_{L^q(\Nc)}\meg \ee^{H_{K'}(h)} \norm{\zeta \mapsto \ee^{H_{K'}(\Phi(\zeta))}}_{L^q(\Nc)} \norm{f*\phi}_{L^q(F)}
\]
for every $h\in F$ and for every $q\in (0,\infty]$, since $H_{K'}$ is sub-additive.
Now, observe that~\cite[Lemma 5.6]{Calzi3} implies that there is a constant $c>0$ such that
\[
\langle \lambda, \Phi(\zeta)\rangle \Meg c \abs{\Phi(\zeta)}d(\lambda, \partial \Lambda_+)
\]
for every $\lambda\in \overline{\Lambda_+}$ and for every $\zeta\in E$ (so that $\Phi(\zeta)\in \Lambda_+^\circ$). Using the fact that $d(K',\partial \Lambda_+)>0$ and that $\abs{\Phi}$ is homogeneous of degree $2$ and proper, we then see that there is a constant $c'>0$ such that
\[
H_{K'}(\Phi(\zeta))\meg -c' \abs{\zeta}^2
\] 
for every $\zeta\in E$, so  that the mapping $\zeta \mapsto \ee^{H_{K'}(\Phi(\zeta))}$ belongs to $L^q(E)$ for every $q\in (0,\infty]$. Hence, $\sE_\phi f \in \Bc_{K'}^q(\Nc)$ and there is a constant $C_{1,q}>0$ such that
\[
\norm{(\sE_\phi f)_0}_{L^q(\Nc)}\meg C_{1,q} \norm{f}_{L^q(F)}
\]
for every $f\in \Sc(\Nc)$. In particular, $\sE_\phi f\in \Bc^2_{K'}(\Nc)$, so that 
\[
\pi_\lambda(P[(\sE_\phi f)_0])= \chi_K(\lambda) \pi_\lambda((\sE_\phi f)_0)
\]
for every $f\in \Sc(F)$ and for almost every $\lambda\in \Lambda_+$. In addition, Proposition~\ref{prop:8} implies that
\[
\pi_\lambda((\sE_\phi f)_0)=\frac{\pi^n}{2^n\abs{\Pfaff(\lambda)}}(\Fc_F f)(\lambda)(\Fc_F \phi)(\lambda) 
\]
 for every $\lambda\in \Lambda_+$, so that
 \[
 P[(\sE_\phi f)_0]=\sE_\phi( f* \Fc_F^{-1}(\chi_K)  )
 \]
 for every $f\in \Sc(F)$. To conclude, observe that Cauchy's estimates imply that
 \[
 \abs{(f*\phi)(x)}\meg \dashint_{B_{E\times F_\C}((0,0),1)}\abs{\sE_\phi f(\zeta,z+x)}\,\dd (\zeta,z)
 \]
 for every $x\in F$, so that by means of Jensen's inequality and the preceding estimates we see that there is a constant $C_2>0$ such that 
 \[
 \norm{f*\phi}_{L^p(F)}\meg C_2 \norm{(\sE_\phi f)_0}_{L^p(\Nc)}
 \]
 for every $f\in \Sc(F)$, so that
 \[
 \norm{f*\Fc_F^{-1}(\chi_K)}_{L^p(F)}\meg C_{1,p} C_2 \norm{P}_{\Lin(L^p(\Nc))} \norm{f}_{L^p(F)}
 \]
 for every $f\in \Sc(F)$.   Therefore, $L^p(F)*\Fc_F^{-1}(\chi_K)\subseteq L^p(F)$ continuousy, whence the result. 

\textsc{Step II.} Assume now that $\Fc_F^{-1}(\chi_K)$ is a convolutor of $L^p(F)$. Fix $\phi \in \widetilde\Sc_{\overline{\Lambda_+}}(\Nc)$ so that $\Fc_\Nc \phi=1$ on $K$, and define $u\coloneqq \delta_0\otimes \Fc_F^{-1}(\chi_K)$. Let us prove that 
\[
P f= (f*\phi)*u
\]
for every $f\in \Sc(\Nc)$. Then, fix a positive $\tau\in C^\infty(F')$ with integral $1$, and define
\[
u_j\coloneqq \delta_0\otimes \Fc_F^{-1}(\chi_K*[2^{mj} \tau(2^j\,\cdot\,)])
\] 
for every $j\in \N$, so that $u_j$ is a bounded measure supported in $F$. In addition,
\[
\pi_\lambda(u_j)= \Fc_F (\Fc_F^{-1}(\chi_K*[2^{mj} \tau(2^j\,\cdot\,)]))(\lambda) \pi_\lambda(0,0)=(\chi_K*[2^{mj} \tau(2^j\,\cdot\,)])(\lambda) I_{\sH_\lambda}
\]
for every $\lambda\in \Lambda_+$, so that
\[
\pi_\lambda(f*\phi*u_j)= \pi_\lambda(f) P_{\lambda,0} (\Fc_\Nc\phi)(\lambda) (\chi_K*[2^{mj} \tau(2^j\,\cdot\,)])(\lambda)
\]
for every $\lambda\in \Lambda_+$.
Passing to the limit for $j\to \infty$, and observing that $f*\phi*u_j$ converges to $f*\phi*u$ in $L^2(\Nc)$, this proves that
\[
\pi_\lambda(f*\phi*u)=\chi_K(\lambda)\pi_\lambda(f) P_{\lambda,0}
\]
for almost every $\lambda\in \Lambda_+$. Since clearly $f*\phi*u=u*f*\phi$ is CR, by Theorem~\ref{prop:6} this is sufficient to prove that $P f=f*\phi*u$.

Then,
\[
\begin{split}
\norm{P f}_{L^p(\Nc)}&\meg \norm{\zeta \mapsto  \norm{[(f*\phi)(\zeta,\,\cdot\,)]*\Fc^{-1}_F(\chi_K)}_{L^p(F)}}_{L^p(E)}\\
&\meg \norm{\,\cdot\,*\Fc^{-1}_F(\chi_K)}_{\Lin(L^p(F))}\norm{f*\phi}_{L^p(\Nc)}\\
&\meg \norm{\,\cdot\,*\Fc^{-1}_F(\chi_K)}_{\Lin(L^p(F))} \norm{\phi}_{L^1(\Nc)} \norm{f}_{L^p(\Nc)}
\end{split}
\]
for every $f\in \Sc(\Nc)$. Then, $P$ induces an endomorphism of $L^p(\Nc)$.
\end{proof}

\begin{cor}\label{cor:3}
Keep the hypotheses and the notation of Proposition~\ref{prop:2}, and
assume that $K$ has a non-empty interior. Then, the following hold: 
\begin{enumerate}
\item if $K$ is polyhedral, then $P$ induces an endomorphism of
  $L^p(\Nc)$ for every $p\in (1,\infty)$; 
\item if there is a $2$-dimensional affine subspace $\pi$ of $F'$  which meets the interior of $K$, and $(\partial K)\cap \pi$ is the graph of a non-affine convex function of class $C^1$ in the vicinity of some of its points,  then $P$ does not induce  an endomorphism of $L^p(\Nc)$, for every $p\in [1,\infty]\setminus  \Set{2}$. 
\end{enumerate}
\end{cor}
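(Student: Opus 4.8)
The plan is to reduce everything to the corresponding question for the commutative Fourier multiplier $\chi_K$ on $L^p(F)$ via Proposition~\ref{prop:2}: for $p\in(1,\infty)$, the projector $P$ induces an endomorphism of $L^p(\Nc)$ if and only if $\Fc_F^{-1}(\chi_K)$ is a convolutor of $L^p(F)$, i.e.\ if and only if $\chi_K$ is a Fourier multiplier of $L^p(F)$. For~(2), the cases $p\in\Set{1,\infty}$ are already contained in the remark following Proposition~\ref{prop:12} (the convolution kernel of $P$ is not a bounded measure), so it suffices to treat $p\in(1,\infty)\setminus\Set{2}$, for which Proposition~\ref{prop:2} applies.

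For~(1), since $K$ is a compact convex set with non-empty interior, being polyhedral means $K=\bigcap_{j=1}^N H_j$ for finitely many half-spaces $H_j=\Set{\lambda\in F'\colon \ell_j(\lambda)\Meg c_j}$, with $\ell_j$ a non-zero (real) linear functional on $F'$ and $c_j\in\R$. Then $\chi_{H_j}(\lambda)=\tfrac12\bigl(1+\sgn(\ell_j(\lambda)-c_j)\bigr)$; since a translation in the frequency variable corresponds to modulation on $L^p(F)$, which is an isometry and leaves the convolutor norm unchanged, it suffices to note that $\lambda\mapsto\sgn(\ell_j(\lambda))$ is a Fourier multiplier of $L^p(F)$ for $p\in(1,\infty)$. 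After an orthogonal change of coordinates on $F'$ this is (a scalar multiple of) the symbol of the Hilbert transform acting in a single coordinate direction, which is bounded on $L^p(F)$ for $p\in(1,\infty)$ by Fubini's theorem and the one-dimensional M.~Riesz theorem. As the space of Fourier multipliers of $L^p(F)$ is an algebra under pointwise multiplication with submultiplicative norm, $\chi_K=\prod_{j=1}^N\chi_{H_j}$ is a Fourier multiplier of $L^p(F)$, and Proposition~\ref{prop:2} yields the claim.

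For~(2), let $\pi=\lambda_0+W$ be the given $2$-dimensional affine subspace, with $W$ linear of dimension $2$, and suppose by contradiction that $\chi_K$ is a Fourier multiplier of $L^p(F)$ for some $p\in(1,\infty)\setminus\Set{2}$. Mollifying, $\chi_K*\phi_\eps$ is a \emph{continuous} Fourier multiplier of $L^p(F)$ with norm at most that of $\chi_K$ (as $\norm{\phi_\eps}_{L^1(F)}=1$); de Leeuw's restriction theorem then shows that its restriction to $\pi$ is a Fourier multiplier of $L^p(\pi)$ with norm bounded independently of $\eps$, and letting $\eps\to0$, using the lower semicontinuity of the multiplier norm under uniformly bounded pointwise a.e.\ convergence, we conclude that $\chi_{K\cap\pi}=\chi_K|_\pi$ is a Fourier multiplier of $L^p(\pi)$. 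Now $K\cap\pi$ is a compact convex subset of $\pi\cong\R^2$ with non-empty interior (since $\pi$ meets the interior of $K$), and its boundary relative to $\pi$ equals $(\partial K)\cap\pi$, which by hypothesis is, near one of its points, the graph of a non-affine convex $C^1$ function. Such an arc carries a continuum of pairwise distinct tangent directions, which is exactly the geometric input required to run Fefferman's Besicovitch-set construction~\cite{Fefferman}; hence $\chi_{K\cap\pi}$ is \emph{not} a Fourier multiplier of $L^p(\R^2)$ for any $p\neq2$, a contradiction. By Proposition~\ref{prop:2}, $P$ does not induce an endomorphism of $L^p(\Nc)$ for $p\in(1,\infty)\setminus\Set{2}$, and together with the first paragraph this proves~(2).

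The main obstacle is part~(2): one must justify rigorously the descent from the discontinuous multiplier $\chi_K$ on $F$ to the planar multiplier $\chi_{K\cap\pi}$ — either through the mollification plus lower-semicontinuity argument sketched above, or through an equivalent transference principle for restrictions of $L^p$-multipliers to affine subspaces — and one must invoke the correct form of Fefferman's theorem, namely the version valid for a planar convex body bounded near some point by a merely $C^1$, non-affine arc, where the counterexample uses only the variation of the tangent direction and no quantitative curvature. Part~(1) is comparatively routine, relying solely on the $L^p$-boundedness of directional Hilbert transforms and the algebra property of Fourier multipliers.
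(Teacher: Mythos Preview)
Your argument is correct and, for part~(1), coincides with the paper's proof: write $\chi_K$ as a finite product of half-space indicators and invoke the $L^p$-boundedness of directional Hilbert transforms.

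For part~(2) the paper follows the same overall strategy but handles the descent to the $2$-dimensional slice differently. Instead of mollifying and applying de~Leeuw's restriction theorem, the paper quotes directly a result of Jodeit~\cite{Jodeit}: if $m$ is an $L^p(F)$-multiplier, then its restriction to an affine subspace $\pi$ is an $L^p$-multiplier on $\pi$, provided the restriction is taken through Lebesgue points of $m$; since every point of $\pi\setminus(\partial K\cap\pi)$ is a Lebesgue point of $\chi_K$, and $\partial K\cap\pi$ is null in $\pi$, this yields $\chi_{K\cap\pi}$ as an $L^p(\pi)$-multiplier in one stroke. Your route via mollification, de~Leeuw for continuous multipliers, and lower semicontinuity of the multiplier norm under bounded a.e.\ convergence is a standard and equally valid alternative; it is slightly longer but more self-contained, as it does not require locating the precise Lebesgue-point statement in the literature. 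In the planar step both proofs say the same thing: a non-affine convex $C^1$ arc produces an \emph{interval} of normal directions, which is all that Fefferman's Besicovitch-type construction needs. The paper just makes this explicit by normalizing so that the normals cover $[\pi/3,2\pi/3]$.
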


This result, for $E=\Set{0}$, may be considered as \emph{folklore} and follows from the arguments used by Fefferman in~\cite{Fefferman} to solve the multiplier problem for the ball. 
Since we were not able to find a precise reference, we indicate the main steps of the proof.

\begin{proof}
By Proposition~\ref{prop:2}, we may assume that $\Nc=F$.

\textsc{Step I.} Assume first that $p\in (1,\infty)$  and  that $K$ is
polyhedral. Then, $\chi_K$ is a finite product of characteristic
functions of half-spaces, so that the assertion follows by the
continuity of the Hilbert transform on $L^p(\R)$. 

\textsc{Step II.} Assume now that $F=F'=\R^2 $ and that $\partial K$ is the graph of a non-affine convex function of class $C^1$ in the vicinity of some point.   Up to a linear change of coordinates, we may assume that, for every $\theta\in [\pi/3,2\pi/3]$, $\ee^{i \theta}$ is the normal vector to $K$ at some smooth point
$\lambda_\theta$ of  $\partial K$. By inspection of the proof of~\cite[Theorem 1]{Fefferman}, this is sufficient to conclude that $\chi_K$ is not a Fourier multiplier of $L^p(\R^2)$ for every $p\in[1,\infty]\setminus\Set{2}$. 

\textsc{Step III.} Assume that there is a $2$-dimensional affine subspace $\pi$ of $F'$ which meets the interior of $K$, and  that  $(\partial K)\cap \pi$ is the graph of a non-affine convex function of class $C^1$ in the vicinity of some point. Observe
that, by~\cite[Proposition 16 of Chapter II, \S\ 2, No.\ 6]{BourbakiTVS}, the interior of $K\cap \pi$ in $\pi$ is  the intersection of $\pi$ and the interior of $K$  (in $F'$), so that $\partial K\cap \pi$ is the boundary of $K\cap \pi$ in $\pi$. In addition, every $\lambda\in\pi\setminus (\partial K\cap \pi)$ is a Lebesgue point for $
\chi_K$, so that~\cite{Jodeit} implies that, if $\chi_K$ is a Fourier multiplier of $L^p(F)$, then $\chi_{K\cap \pi}$ is a Fourier multiplier of $L^p((\pi-\pi)^{\circ \perp})$. By \textsc{step II}, this can be the case only if $p=2$, so that the assertion follows. 
\end{proof}
\medskip

\section{Sampling sequences}\label{sec:6}

The goal of this section is to prove a sufficient condition for a sequence
in $\Nc$ to be sampling for $\Bc_K^p(\Nc)$, Theorem \ref{prop:10}.
We first show that lattices on $\Nc$ may be constructed from lattices on $E$ and $F$, though with some loss of precision. 

\begin{lem}\label{lem:2}
There are constants $C,C'>0$ such that for every $\delta>0$, for every
$R>1$, for every $(\delta^2,R^2)$-lattice $(x_j)_{j\in J}$ in $F$ and
for every $(\delta,R)$-lattice $(\zeta_{j'})_{j'\in J'}$ in
$E$,\footnote{Notice that $E$ and $F$ are hilbertian spaces, so that
  lattices thereon are well defined.} the family
$(\zeta_{j'},x_j)_{(j,j')\in J\times J'}$ is a $(C\delta,C'
R)$-lattice on $\Nc$. 
\end{lem}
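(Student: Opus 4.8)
The plan is to exploit the semidirect-product-like structure of $\Nc=E\times F$ and the fact that the homogeneous distance $d$ on $\Nc$ is comparable to the gauge $(\zeta,x)\mapsto \abs{\zeta}+\abs{x}^{1/2}$. Write $d_E,d_F$ for the Euclidean distances on $E$ and $F$. The group law gives, for $(\zeta,x),(\zeta',x')\in\Nc$,
\[
(\zeta,x)^{-1}(\zeta',x')=\bigl(\zeta'-\zeta,\;x'-x-2\Im\Phi(\zeta,\zeta')\bigr),
\]
so by homogeneity there is a constant $a>1$ with
\[
\tfrac1a\bigl(\abs{\zeta'-\zeta}+\abs{x'-x-2\Im\Phi(\zeta,\zeta')}^{1/2}\bigr)\meg d((\zeta,x),(\zeta',x'))\meg a\bigl(\abs{\zeta'-\zeta}+\abs{x'-x-2\Im\Phi(\zeta,\zeta')}^{1/2}\bigr).
\]
Since $\Phi$ is bilinear, $\abs{\Im\Phi(\zeta,\zeta')}\meg b\,\abs{\zeta}\,\abs{\zeta'}$ for some $b>0$; this is the only place the twisting enters, and it is harmless as long as one keeps the $\zeta$-variables in a bounded region, which one does after the covering step.

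\textbf{Covering (upper bound on $R$).} Given $(\zeta,x)\in\Nc$, pick $j'$ with $d_E(\zeta,\zeta_{j'})\meg R\delta$ and then $j$ with $d_F(x+2\Im\Phi(\zeta,\zeta_{j'})-2\Im\Phi(\zeta,\zeta), x_j)\meg R^2\delta^2$; more simply, pick $j'$ first, set $\tilde x\coloneqq x-2\Im\Phi(\zeta,\zeta_{j'})$ wait—one must be slightly careful: choose $j'$ so that $\abs{\zeta-\zeta_{j'}}\le R\delta$, then choose $j$ so that $\abs{(x-2\Im\Phi(\zeta_{j'}-\zeta,\zeta_{j'}))-x_j}\le R^2\delta^2$ (the $F$-lattice covers $F$). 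A direct substitution into the gauge formula, together with $\abs{\Im\Phi(\zeta-\zeta_{j'},\zeta_{j'})}\meg b\abs{\zeta-\zeta_{j'}}\abs{\zeta_{j'}}$, shows $d((\zeta,x),(\zeta_{j'},x_j))$ is bounded by a constant times $R\delta+R\delta^{}\cdot(\text{stuff})^{1/2}$. To kill the unbounded factor $\abs{\zeta_{j'}}$ one translates on the left by $(\zeta_{j'},x_j)^{-1}$ at the very start, i.e. one works in the coordinates centred at the candidate lattice point; there the relevant $\zeta$-difference has size $\meg R\delta$, so $\abs{\Im\Phi(\zeta-\zeta_{j'},\zeta-\zeta_{j'})}\meg bR^2\delta^2$, and one gets $d((\zeta,x),(\zeta_{j'},x_j))\meg C' R\delta$ with $C'$ absolute. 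Hence the balls $B((\zeta_{j'},x_j),C'R\delta)$ cover $\Nc$.

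\textbf{Separation (lower bound).} If $(j,j')\neq(k,k')$, then either $j'\neq k'$, in which case $\abs{\zeta_{j'}-\zeta_{k'}}\Meg 2\delta$ (balls $B_E(\zeta_{j'},\delta)$ disjoint forces centres at distance $\Meg2\delta$... actually disjointness of $B(\cdot,\delta)$ only gives distance $>0$; but a $(\delta,R)$-lattice has, by the standard packing argument combined with the covering property, a uniform lower bound $c\delta$ on distinct centre distances — state this as the working definition, or note $\abs{\zeta_{j'}-\zeta_{k'}}\Meg 2\delta$ from disjoint open balls $B_E(\zeta_{j'},\delta)$), and then $d((\zeta_{j'},x_j),(\zeta_{k'},x_k))\Meg\tfrac1a\abs{\zeta_{j'}-\zeta_{k'}}\Meg\tfrac{2}{a}\delta$; or $j'=k'$ and $j\neq k$, in which case the $\zeta$-component vanishes, $2\Im\Phi(\zeta_{j'},\zeta_{j'})=0$, so the $F$-component of the product is exactly $x_k-x_j$, giving $d\Meg\tfrac1a\abs{x_j-x_k}^{1/2}\Meg\tfrac1a(2\delta^2)^{1/2}=\tfrac{\sqrt2}{a}\delta$. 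Thus the balls $B((\zeta_{j'},x_j),C\delta)$ are pairwise disjoint for $C\coloneqq 1/(a\sqrt2)$ (or the min of the two constants, halved). Combining, $(\zeta_{j'},x_j)$ is a $(C\delta,C'R/C)$-lattice — relabel $C'/C$ as the new expansion constant, which is absolute.

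\textbf{Main obstacle.} The only genuinely delicate point is the covering step, where the bilinear twist $\Im\Phi(\zeta,\zeta')$ would contribute a term growing like $\abs{\zeta}$ (the base point) if handled naively; the fix is to perform the estimate after left-translating to the candidate lattice point, so that all $\zeta$-differences are $O(R\delta)$ and the twist contributes only $O(R^2\delta^2)$, i.e. $O(R\delta)$ inside the gauge. Everything else is a routine manipulation of the explicit group law and the gauge–distance comparison, with constants depending only on $a$, $b$, and the dimensions — in particular independent of $\delta$ and $R$, as required.
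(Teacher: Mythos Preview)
Your approach is essentially the paper's: compare $d$ to the gauge $\abs{\zeta}+\abs{x}^{1/2}$, handle separation in the two cases $j'\neq k'$ and $j'=k'$, then cover by first choosing $j'$ in $E$ and then $j$ in $F$. The separation argument is correct and matches the paper almost verbatim.

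The covering step, however, is muddled. Your stated choice of $j$ --- namely $\abs{x-2\Im\Phi(\zeta_{j'}-\zeta,\zeta_{j'})-x_j}\meg R^2\delta^2$ --- is already the right one: since $\Im\Phi(\zeta_{j'},\zeta_{j'})=0$, this condition is exactly $\abs{x_j-x-2\Im\Phi(\zeta,\zeta_{j'})}\meg R^2\delta^2$, and that quantity is precisely the $F$-component of $(\zeta,x)^{-1}(\zeta_{j'},x_j)$. Substituting into the gauge gives $d((\zeta,x),(\zeta_{j'},x_j))\meg a(R\delta+R\delta)=2aR\delta$ immediately. There is no residual twist to estimate, no ``unbounded factor $\abs{\zeta_{j'}}$'' to kill, and the bilinear bound $\abs{\Im\Phi(\zeta-\zeta_{j'},\zeta_{j'})}\meg b\abs{\zeta-\zeta_{j'}}\abs{\zeta_{j'}}$ is never needed. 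The ``main obstacle'' you describe is a phantom, apparently produced by a slip when carrying out the substitution. Two further symptoms of the confusion: the bound $\abs{\Im\Phi(\zeta-\zeta_{j'},\zeta-\zeta_{j'})}\meg b R^2\delta^2$ in your proposed fix is vacuous, since $\Phi(w,w)\in F$ is real for every $w$; and ``translating on the left by $(\zeta_{j'},x_j)^{-1}$ at the very start'' is circular, as $j$ has not yet been chosen.

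The paper phrases the same step more transparently: the translated family $(x_j-2\Im\Phi(\zeta,\zeta_{j'}))_{j\in J}$ is again a $(\delta^2,R^2)$-lattice on $F$, hence covers $x$; this makes the exact cancellation of the twist visible at a glance. Redo your substitution carefully and you will find the argument closes in one line, with $C'=2a$ (in the paper's notation, $C'=2C_1$).
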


\begin{proof}
Observe first that, by homogeneity, there is a constant $C_1>0$ such that
\[
\frac{1}{C_1} d((0,0),(\zeta,x))\meg \abs{\zeta}+\abs{x}^{1/2}\meg C_1 d((0,0),(\zeta,x))
\]
for every $(\zeta,x)\in \Nc$. Then, let $(x_j)_{j\in J}$ be a
$(\delta^2,R^2)$-lattice  in $F$, and let $(\zeta_{j'})_{j'\in J'}$ be
a $(\delta,R)$-lattice  in $E$. Observe that 
\[
d((\zeta_{j'},x_{j_1}), (\zeta_{j'},x_{j_2}))=d((0,0),(0,
x_{j_2}-x_{j_1}))  \Meg \frac{1}{C_1} \abs{x_{j_2}-x_{j_1}}^{1/2}\Meg
\frac{\sqrt 2 \delta}{C_1} 
\]
for every $j'\in J'$ and for every $j_1,j_2\in J$, $j_1\neq j_2$. Analogously,
\[
d((\zeta_{j'_1},x_{j_1}),
(\zeta_{j'_2},x_{j_2}))=d((0,0),(\zeta_{j'_2}-\zeta_{j'_1},
x_{j_2}-x_{j_1}+2 \Im \Phi(\zeta_{j'_1},\zeta_{j'_2}))) \Meg
\frac{1}{C_1} \abs{\zeta_{j'_2}-\zeta_{j'_1}}\Meg \frac{2 \delta}{C_1} 
\]
for every $j'_1,j'_2\in J'$, $j_1'\neq j'_2$, and for every
$j_1,j_2\in J$. Therefore, the balls $B((\zeta_{j'},x_j),
\delta/(\sqrt 2 C_1))$, $j\in J$, $j'\in J'$, are pairwise
disjoint. It will then suffice to prove that 
\[
B_E(\zeta_{j'},R\delta)\times F \subseteq \bigcup_{j\in J} B((\zeta_{j'},x_j),2 C_1 R\delta)
\]
for every $j'\in J'$.  Indeed, fix $j'\in J'$, $\zeta\in
B_E(\zeta_{j'},R\delta)$, and $x\in F$, and observe that, since
$(x_j-2 \Im \Phi(\zeta,\zeta_{j'}))_j$ is still a
$(\delta^2,R^2)$-lattice on $F$ by translation invariance, there is
$j\in J$ such that 
\[
\abs{x_j-2 \Im \Phi(\zeta,\zeta_{j'})-x}^{1/2}\meg R\delta,
\]
so that
\[
\begin{split}
d((\zeta,x),(\zeta_{j'},x_j))&=d((0,0), (\zeta_{j'}-\zeta, x_j-x+2 \Im\Phi(\zeta,\zeta_{j'})))\\
&\meg C_1(\abs{\zeta_{j'}-\zeta}+\abs{x_j-x+2 \Im\Phi(\zeta,\zeta_{j'})}^{1/2}  )\\
&\meg 2C_1 R\delta.
\end{split}
\]
The assertion follows.
\end{proof}

We now prove Theorem \ref{prop:10}.
By Lemma~\ref{lem:2}, the preceding result may be applied to families
of the form $(\zeta_{j'},x_j)_{(j',j)\in J'\times J}$, provided that
$(\zeta_{j'})$ and $(x_j)$ are sufficiently fine lattices in $E$ and
$F$, respectively.  
The sampling theorems proved in~\cite{MonguzziPelosoSalvatori}
provide {\em sharp} explicit necessary and sufficient conditions for a natural class of
lattices of the above form to be sampling, when $F=\R$ and $p=2$.
Arguing as in~\cite[Theorem 2.1]{OlevskiiUlanovskii}, one may show
that the  same sufficient conditions  essentially  work for all $p\in
(0,\infty]$. See also~\cite{OlevskiiUlanovskii} for the case
$E=\Set{0}$.   It would be of 
interest to obtain sharp conditions also in the current setting.

\begin{proof}[Proof of Theorem \ref{prop:10}.]
Observe that, by the proof of~\cite[Theorem 5.13]{Calzi3}, there is $\phi \in \Sc(\Nc)$ such that 
\[
f=f*\tau
\]
for every $f\in \Sc(\Nc)\cap \Oc_{K+\overline B_{F'}(0,2)}(\Nc)$. Then, take $\phi \in \widetilde\Sc_{\overline B_{F'}(0,1)\cap \overline{\Lambda_+}}(\Nc)$, and observe that $f \phi \in \Sc(\Nc)\cap  \Oc_{K+\overline B_{F'}(0,2)}(\Nc)$ for every $f\in \Sc(\Nc)\cap \Oc_{K+\overline B_{F'}(0,1)}(\Nc)$, so that $(f \phi)*\tau= f \phi$. Since $\phi(2^{-j}\,\cdot\,) \in\widetilde  \Sc_{\overline B_{F'}(0,1)\cap  \overline{\Lambda_+}}(\Nc)$ for every $j\in \N$, replacing $\phi$ with $\phi(2^{-j}\,\cdot\,)$ and passing to the limit for $j\to \infty$, we see that $f*\tau=f$ for every $f\in \Oc_{K+\overline B_{F'}(0,1)}(\Nc)$. 
 
 Therefore,~\cite[Proposition 9]{Calzi2} implies that there are two
 constants $C,\delta_->0$ such that for every $(\delta,R)$-lattice
 $(\zeta_j,x_j)_{j\in J}$ on $\Nc$, with $\delta>0$ and $R\in
 (1,R_0]$, relative to a left-invariant distance on $\Nc$, one has 
\[
\frac1 C \norm{f}_{L^p(\Nc)}\meg \delta^{Q/p}\norm*{\max_{\overline    B((\zeta_j,x_j), R\delta)} \abs{f}  }_{\ell^p(J)}\meg  C\norm{f}_{L^p(\Nc)} 
\]
for every $f\in \Oc_{K+\overline B_{F'}(0,1)}(\Nc)$, if $\delta\meg \delta_+$, and
\[
\frac1 C \norm{f}_{L^p(\Nc)}\meg \delta^{Q/p}\norm*{\min_{\overline    B((\zeta_j,x_j), R\delta)} \abs{f}  }_{\ell^p(J)}\meg  C\norm{f}_{L^p(\Nc)} 
\]
for every $f\in\Oc_{K+\overline B_{F'}(0,1)}(\Nc)\cap L^p(\Nc)$, if $\delta\meg \delta_-$. To conclude,  observe that there is $\psi\in \widetilde \Sc_{\overline B_{F'}(0,1)\cap\overline{\Lambda_+}}(\Nc)$ such that $\psi(0,0)=1$, so that $\psi_j\coloneqq \psi(2^{-j}\,\cdot\,) \in \widetilde \Sc_{\overline B_{F'}(0,1)\cap\overline{\Lambda_+}}(\Nc)$ for every $j\in\N$. Then,  $f\psi_j$ converges locally uniformly to $f$ on $\Nc$, $f \psi_j\in \Oc_{K+\overline B_{F'}(0,1)}(\Nc)\cap L^p(\Nc)$,   and $\abs{f \psi_j}\meg \abs{f} \norm{\psi}_{L^\infty(\Nc)}$ for every $j\in \N$. The assertion follows. 
\end{proof}

\end{document}